\newtheorem{teo}{Theorem}[section]
\newtheorem{pro}[teo]{Proposition}
\newtheorem{cor}[teo]{Corollary}
\newtheorem{ex}{Example}[section]
\newcommand{\esp}{ }
\newcommand{\x}{$\{X_n\}_{n\geq 1}$}
\newcommand{\z}{$\{Z_n\}_{n\geq 1}$}
\newcommand{\w}{$\{W_n\}_{n\geq 1}$}
\newcommand{\s}{$\{S_n\}_{n\geq 1}$}
\newcommand{\dst}{\displaystyle}
\newcommand{\nn}{\nonumber}
\newcommand{\nid}{\noindent }
\newcommand{\titulo}[1]{\begin{center}\mbox{} \\ \noindent \textit{\textbf{\Large #1}}\\\vspace{0.5cm}\end{center}}
\renewcommand{\abstract}[1]{{\small \noindent \textbf{Abstract:} #1\\}}
\begin{document}
\pagestyle{myheadings}

\begin{center}
\titulo{Estimating the extremal index through local dependence}
\end{center}

\vspace{0.5cm}

\textbf{Helena Ferreira} Department of Mathematics, University of
Beira
Interior, Covilhã, Portugal  (helena.ferreira@ubi.pt)\\

\textbf{Marta Ferreira} Center of Mathematics of Minho University, Braga, Portugal (msferreira@math.uminho.pt)\\

\abstract{The extremal index is an important parameter in the characterization of extreme values of a stationary sequence. Our new estimation approach for this parameter is based on the extremal behavior under the local dependence condition D$^{(k)}$($u_n$). We compare a process satisfying one of this hierarchy of increasingly weaker local mixing conditions with a process of cycles satisfying the D$^{(2)}$($u_n$) condition. We also analyze local dependence within moving maxima processes and derive a necessary and sufficient condition for D$^{(k)}$($u_n$). In order to evaluate the performance of the proposed estimators, we apply an empirical diagnostic for local dependence conditions, we conduct a simulation study and compare with existing methods. An application to a financial time series is also presented.}

\nid\textbf{keywords:} {extreme value theory, stationary sequences, dependence conditions, extremal index}\\

\nid\textbf{AMS 2000 Subject Classification}  Primary: 60G70; Secundary: 62G32\\

\section{Introduction}\label{sint}

Let $\{X_n\}_{n\geq 1}$ be a stationary sequence with marginal distribution $F_{X_n}=F$. Consider $M_{i,j}=\bigvee_{s=i+1}^jX_s$, where $x\vee y$ denotes $\max(x,y)$, with $M_{0,j}=M_j$ and $M_{i,j}=-\infty$ for $i>j$. The sequence \x\esp has extremal index $\theta\in[0,1]$ if, for each $\tau>0$, there is a sequence of normalized levels  $\{u_n\equiv u_n^{(\tau)}\}_{n\geq 1}$, i.e.,
\begin{eqnarray}\nn
n(1-F(u_n))\to\tau,
\end{eqnarray}
as $n\to\infty$, such that
\begin{eqnarray}\label{defei}
P(M_n\leq u_n)\to e^{-\theta\tau},
\end{eqnarray}
(Leadbetter \emph{et al.} \cite{lead+83} 1983).
When $\theta=1$ the exceedances of high thresholds $u_n$, by the variables in \x, tend to occur isolated as in an independent variables context. However, if $\theta<1$ we have groups of exceedances in the limit.
Clusters of extreme values are linked with incidences and durations of catastrophic phenomena, an important issue in areas like environment, finance, insurance, engineering, among others. The extremal index is a key parameter in this context and its estimation has been greatly addressed in literature. The most popular procedures are the blocks and the runs estimators (e.g., Nandagopalan \cite{nand90} 1990; Hsing \cite{hsing93} 1993; Weissman and Novak \cite{weis+98} 1998; Robert \emph{et al.} \cite{rob+09} 2009), and more recently, the interexceedance times method (Ferro and Segers \cite{ferro+segers03} 2003; S\"{u}veges \cite{suv07}).
The first group estimators requires a clustering identification parameter which is a largely arbitrarily task to comply and has some impact in inference. The second group avoids this parameter but may be less precise in specific contexts.

In this paper we propose a new estimation procedure that works under the local dependence condition D$^{(k)}$($u_n$) of Chernick \emph{et al.} (\cite{chern+91}, 1991). This condition requires the dependence condition D($u_n$) of Leadbetter (\cite{lead74}, 1974), which states that $\alpha_{n,l_n}\to 0$, as $n\to\infty$, for some sequence $l_n=o(n)$, where
\begin{eqnarray}\nn
\begin{array}{l}
\alpha_{n,l}=\sup\{|P\left(M_{i_1,i_1+p}\leq u_n,M_{j_1,j_1+q}\leq u_n\right)
-P\left(M_{i_1,i_1+p}\leq u_n\right)P\left(M_{j_1,j_1+q}\leq u_n\right)|:\vspace{0.35cm}\\
\hspace{1cm}\,1\leq i_1<i_1+p+l\leq j_1<j_1+q\leq n\}.
\end{array}
\end{eqnarray}
We say that condition D$^{(k)}$($u_n$) holds for \x, if for some $\{k_n\}_{n\geq 1}$ such that,
\begin{eqnarray}\label{kn}
k_n\to\infty,\,k_n\alpha_{n,l_n}\to 0,\,k_nl_n/n\to 0,
\end{eqnarray}
as $n\to\infty$, we have
\begin{eqnarray}\nn
nP\left(X_1>u_n,M_{1,k}\leq u_n<M_{k,r_n}\right)\dst\mathop{\longrightarrow}_{n\to\infty} 0,
\end{eqnarray}
with $\{r_n=[n/k_n]\}_{n\geq 1}$ ($[x]$ denotes the integer part of x).
Condition D$^{(k)}$($u_n$) is implied by
\begin{eqnarray}\nn
n\sum_{j=k+1}^{r_n}P\left(X_1>u_n,M_{1,k}\leq u_n<X_{j}\right)\dst\mathop{\longrightarrow}_{n\to\infty} 0.
\end{eqnarray}
This corresponds to condition D$^{'}$($u_n$) of Leadbetter \emph{et al.} (\cite{lead+83}, 1983) whenever $k=1$ which locally restricts the occurrence of clusters of exceedances and thus leads to $\theta=1$. If $k=2$ we have condition D$^{''}$($u_n$) of Leadbetter and Nandagopalan (\cite{lead+nand89}, 1989). This condition locally restricts the occurrence of two or more upcrossings, but still allows clustering of exceedances.


In Chernick \emph{et al.} (\cite{chern+91}, 1991) it is proved that, under D$^{(k)}$($u_n$), the extremal index exists and is given by
\begin{eqnarray}\label{eiruns}
\theta_X=\lim_{n\to\infty}P(M_{1,k}\leq u_n|X_1>u_n).
\end{eqnarray}
The runs estimator can be derived from this relation by taking the runs parameter $r$ as $k$. In particular, under condition D$^{(2)}$($u_n$), Nadagopalan (\cite{nand90}, 1990) found
$$
\begin{array}{rl}
\theta_X=& \dst\lim_{n\to\infty}P(X_2\leq u_n|X_1>u_n)\vspace{0.35cm}\\
=& \dst\lim_{n\to\infty}\frac{P(X_1\leq u_n<X_2)}{P(X_2>u_n)},
\end{array}
$$
which motivates his estimator based on the ratio between the number of upcrossings (equal to the number of downcrossings) and the number of exceedances. Although the D$^{(2)}$($u_n$) condition implies D$^{(k)}$($u_n$) for $k>2$ and we have several representations for $\theta_X$ as in (\ref{eiruns}), under D$^{(2)}$($u_n$) we have only to be concerned with the count of upcrossings and exceedances, rather than the length $r$ for runs of non-exceedances or intervals between exceedances. It is this easy approach in the Nandagopalan's estimator that we want to take advantage in this paper, by estimating $\theta_X$ through the extremal index of an auxiliary sequence satisfying D$^{(2)}$($u_n$).

The results that motivate our new estimation approach are given in Section \ref{sresult}. In this section we relate the extremal index $\theta_X$ of the process satisfying D$^{(k)}$($u_n$) with the one of a process of cycles satisfying D$^{(2)}$($u_n$), deriving new representations for $\theta$ that motivate the estimators. In this way, we promote the application of the estimation procedures that work under D$^{(2)}$($u_n$).

Knowledge about D$^{(k)}$($u_n$) has not only impact on the computation of the extremal index of a process but also informs about the cluster structure of extreme values. In moving maximum processes we directly obtain the extremal index by calculating the limit in (\ref{defei}). It may be the reason why there is no study in literature, as far as we know, concerning local dependence within these processes.
In Section \ref{smm} we derive a necessary and sufficient condition for D$^{(k)}$($u_n$) to hold within moving maxima processes.

Section \ref{sestim} is devoted to inference. We state a diagnostic tool to analyze D$^{(k)}$($u_n$) since it is the context of our framework. Therefore, we are also moving forward in diminishing the arbitrarity in the declustering scheme of the runs estimator. We analyze the performance of the new estimators trough simulation and illustrate with an application to a financial time series. We conclude in Section \ref{sdisc}.

\section{Extremal index of grouped variables}\label{sresult}

Let $\{I_0=0,I_n,n\geq 1\}$ be an increasing sequence of integer random variables (r.v.'s) such that $\{S_n=I_n-I_{n-1}\}_{n\geq1}$ is an i.i.d.~sequence satisfying $E(S_n)=p$. From such a renewal process and a stationary sequence \x, let define
\begin{eqnarray}\label{z}
Z_n=M_{I_{n-1},I_n},\,n\geq 1.
\end{eqnarray}
Driven by the strategy used by Rootz\'{e}n (1988, \cite{root88}) in the study of the extremal behavior of the regenerative processes, we will compare $M_n$ with the maximum of the first $[n/p]$ variables in the sequence of cycles \z.
\begin{pro}\label{p1}
Let \x \esp be a stationary sequence and \z\esp defined by (\ref{z}), for some renewal process \s\esp such that $E(S_n)=p$. If $\left\{n\bigvee_{i\geq 1}P\left(M_{I_{i-1},I_i}>u_n\right)\right\}_{n\geq 1}$ is bounded, then
\begin{eqnarray}\nn
P(M_n\leq u_n)-P\left(\bigcap_{i=1}^{[n/p]}M_{I_{i-1},I_i}\leq u_n\right)\to 0,\,n\to\infty.
\end{eqnarray}
\end{pro}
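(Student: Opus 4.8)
The plan is to show that the maximum $M_n = M_{0,n}$ over the first $n$ indices and the maximum $\bigvee_{i=1}^{[n/p]} M_{I_{i-1},I_i} = M_{0,I_{[n/p]}}$ over the first $[n/p]$ renewal cycles have asymptotically equivalent distribution functions at the level $u_n$. The natural route is a sandwiching argument: by the law of large numbers for the i.i.d.\ increments $S_i$ with mean $p$, the random index $I_{[n/p]}$ concentrates around $n$, so the two maxima differ only by a block of indices whose (random) length is $o(n)$ in probability, and the probability that such a short block contains an exceedance of $u_n$ is negligible.

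First I would fix $\varepsilon>0$ and introduce the good event $A_n = \{\, (1-\varepsilon)n \le I_{[n/p]} \le (1+\varepsilon)n \,\}$. Since $\{S_i\}$ is i.i.d.\ with $E(S_1)=p$, the strong law (or just the weak law) gives $I_{[n/p]}/(n/p) \cdot p = I_{[n/p]}/[n/p] \cdot ([n/p]/n) \to 1$ in probability, hence $P(A_n^c)\to 0$. On $A_n$ we have the inclusions
\begin{eqnarray}\nn
\{M_{0,(1+\varepsilon)n}\le u_n\}\subseteq \{M_{0,I_{[n/p]}}\le u_n\}\cap\{M_n\le u_n\}
\end{eqnarray}
(using $M_n=M_{0,n}\le M_{0,(1+\varepsilon)n}$ and $M_{0,I_{[n/p]}}\le M_{0,(1+\varepsilon)n}$ on $A_n$ when $I_{[n/p]}\le(1+\varepsilon)n$), while on $A_n$ both $\{M_n\le u_n\}$ and $\{M_{0,I_{[n/p]}}\le u_n\}$ contain $\{M_{0,\lceil(1-\varepsilon)n\rceil}\le u_n\}\setminus(\text{correction})$. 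I would therefore bound the target difference by
\begin{eqnarray}\nn
\left|P(M_n\le u_n)-P\!\left(M_{0,I_{[n/p]}}\le u_n\right)\right|
\le P(A_n^c) + P\!\left(M_{(1-\varepsilon)n,(1+\varepsilon)n}>u_n\right) + o(1),
\end{eqnarray}
where the middle term controls the discrepancy coming from the at most $2\varepsilon n$ indices on which the two index-sets disagree on $A_n$.

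The key step is estimating $P(M_{(1-\varepsilon)n,(1+\varepsilon)n}>u_n)$, and this is where the boundedness hypothesis enters. Cover the window $((1-\varepsilon)n,(1+\varepsilon)n]$ by the renewal cycles that meet it; by stationarity and a union bound over those cycles, $P(M_{(1-\varepsilon)n,(1+\varepsilon)n}>u_n)$ is bounded by (a constant times) the expected number of cycles overlapping a window of length $2\varepsilon n$ times $\bigvee_{i\ge1}P(M_{I_{i-1},I_i}>u_n)$; since the typical cycle has length $p$, that expected count is of order $2\varepsilon n/p$, so the bound is of order $\varepsilon\cdot\big(n\bigvee_{i\ge1}P(M_{I_{i-1},I_i}>u_n)\big)$, which is $O(\varepsilon)$ uniformly in $n$ by hypothesis. (One must be a little careful: the number of cycles overlapping a fixed-length window is itself random, so I would split again on a further good event where this count is at most $C\varepsilon n$ for suitable $C$, the complement having vanishing probability by the renewal LLN.) Letting $n\to\infty$ gives $\limsup_n |P(M_n\le u_n)-P(M_{0,I_{[n/p]}}\le u_n)| = O(\varepsilon)$, and since $\varepsilon$ is arbitrary the difference tends to $0$.

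The main obstacle is the bookkeeping around the randomness of $I_{[n/p]}$ and of the cycle count in a window: one cannot simply condition on the renewal structure and apply stationarity of $\{X_n\}$ naively, because the cycle endpoints $I_i$ are random and, a priori, one must ensure independence (or at least a clean inequality) between the renewal process and the sequence $\{X_n\}$ — the construction in the paper takes $\{S_n\}$ to be an independent renewal process, which is what makes the union bound $E[\#\text{cycles in window}]\cdot\sup_i P(M_{I_{i-1},I_i}>u_n)$ legitimate. Handling this conditioning cleanly, and making the $o(1)$ corrections from integer parts and endpoint rounding rigorous, is the technical heart; everything else is a standard sandwich plus law of large numbers.
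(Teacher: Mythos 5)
Your proposal is correct and follows essentially the same route as the paper: a law-of-large-numbers sandwich plus a union bound over the $O(\epsilon n)$ discrepancy cycles, each controlled by $\bigvee_{i\geq 1}P\left(M_{I_{i-1},I_i}>u_n\right)$, so that the boundedness hypothesis yields an $O(\epsilon)$ error which vanishes as $\epsilon\to 0$. The only difference is that the paper sandwiches on the cycle-count scale (comparing $L_n=\sup\{k:I_k\leq n\}$ with $[n(1/p\pm\epsilon)]$) rather than on the index scale, so the discrepancy is a deterministic range of cycle indices and the union bound requires neither independence between $\{S_n\}$ and $\{X_n\}$ nor a covering of an index window by random cycles --- precisely the bookkeeping you flag as the technical heart.
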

\begin{proof}
Let $L_n=\sup\{k:I_k\leq n\}$ and $U_n=\inf\{k:I_k> n\}$. By the law of large numbers, $\forall\epsilon>0$, we have
$$
P\left(\left|\frac{L_n}{n}-\frac{1}{p}\right|>\epsilon\right)\dst\mathop{\to }_{n\to\infty} 0\,\textrm{ and }\,
P\left(\left|\frac{U_n}{n}-\frac{1}{p}\right|>\epsilon\right)\dst\mathop{\to }_{n\to\infty} 0.
$$
Furthermore,
$$
\begin{array}{rl}
P(M_n\leq u_n)&\leq P\left(\bigcap_{i=1}^{L_n}\{M_{I_{i-1},I_i}\leq u_n\}\right)\vspace{0.35cm}\\
&=P\left(\bigcap_{i=1}^{L_n}\{M_{I_{i-1},I_i}\leq u_n\},n(1/p-\epsilon)\leq L_n\leq n(1/p+\epsilon)\right)+o(1)\vspace{0.35cm}\\
&\leq P\left(\bigcap_{i=1}^{[n(1/p-\epsilon)]}\{M_{I_{i-1},I_i}\leq u_n\}\right)+o(1)
\end{array}
$$
Similarly, we derive
$$
\begin{array}{rl}
P(M_n\leq u_n)&\geq P\left(\bigcap_{i=1}^{[n(1/p+\epsilon)]}\{M_{I_{i-1},I_i}\leq u_n\}\right)+o(1).
\end{array}
$$
Now, just observe that
$$
\begin{array}{l}
0\leq P\left(\bigcap_{i=1}^{[n/p]}\{M_{I_{i-1},I_i}\leq u_n\}\right)-P\left(\bigcap_{i=1}^{[n(1/p+\epsilon)]}\{M_{I_{i-1},I_i}\leq u_n\}\right)\vspace{0.35cm}\\
\leq n\epsilon \bigvee_{i\geq 1}\{P\left(M_{I_{i-1},I_i}> u_n\right)\}
\leq \epsilon k,
\end{array}
$$
as well as
$$
\begin{array}{l}
0\leq P\left(\bigcap_{i=1}^{[n(1/p-\epsilon)]}\{M_{I_{i-1},I_i}\leq u_n\}\right)-P\left(\bigcap_{i=1}^{[n/p]}\{M_{I_{i-1},I_i}\leq u_n\}\right)\leq \epsilon k,
\end{array}
$$
for some constant $k$.
\end{proof}

If we assume that \z\esp is stationary satisfying D($u_n$) and a local dependence condition D$^{(k)}$($u_n$) then, by applying the previous proposition, we can compute the extremal index of \x\esp from the knowledge of the joint distribution of a finite number of consecutive terms of \z.

In what concerns the local behavior of the large values of \z, we are going to consider to ways: in Proposition \ref{peiindep} we derive the extremal index by assuming the local independence condition D$^{(1)}$($u_n$) and in Proposition \ref{peidep2} by assuming the local dependence condition D$^{(2)}$($u_n$).

\begin{pro}\label{peiindep}
Under the conditions of Proposition \ref{p1}, if  \z\esp is stationary and satisfies D($u_n$) and D$^{(1)}$($u_n$) conditions for $u_n$ such that $un\equiv u_n^{(\tau)}$ for \x\esp and $u_n\equiv u_n^{(\tau^*)}$ for \z, then \x\esp has extremal index
$$
\theta_X=\lim_{n\to\infty}\frac{P(M_{I_1}>u_n)}{pP(X_1>u_n)}=\frac{\tau^*}{p\tau}.
$$
\end{pro}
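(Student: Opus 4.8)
The plan is to chain together three facts: Proposition \ref{p1}, which lets us replace $P(M_n\leq u_n)$ by $P(\bigcap_{i=1}^{[n/p]}M_{I_{i-1},I_i}\leq u_n)$ up to $o(1)$; the definition of the extremal index of the cycle sequence \z; and the definition of the extremal index of \x\esp together with the normalization relations that tie $\tau$, $\tau^*$ and $p$ together. First I would check the hypothesis of Proposition \ref{p1} is in force: since \z\esp is stationary, $\bigvee_{i\geq 1}P(M_{I_{i-1},I_i}>u_n)=P(M_{I_0,I_1}>u_n)=P(M_{I_1}>u_n)$, and by stationarity of \z\esp satisfying D($u_n$), the renewal structure gives $nP(M_{I_1}>u_n)\to$ a finite limit (this is exactly the quantity $\tau^*/?$ — see below), so the boundedness assumption holds. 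Hence
\begin{eqnarray}\nn
P(M_n\leq u_n)=P\left(\bigcap_{i=1}^{[n/p]}M_{I_{i-1},I_i}\leq u_n\right)+o(1).
\end{eqnarray}

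Next I would apply extreme value theory to the stationary sequence \z. Because $r_n:=[n/p]$ satisfies $r_n\to\infty$, and because \z\esp is assumed stationary and to satisfy D($u_n$) and D$^{(1)}$($u_n$), the sequence \z\esp has extremal index equal to $1$: indeed D$^{(1)}$($u_n$) is the analogue of Leadbetter's D$'$($u_n$) at the level of cycles, so clustering is locally suppressed and $\theta_Z=1$. Writing the level for \z\esp as $u_n\equiv u_n^{(\tau^*)}$ means precisely $\#\{i\le r_n\}\cdot P(Z_i>u_n)=r_n P(M_{I_1}>u_n)\to\tau^*$, i.e. $[n/p]\,P(M_{I_1}>u_n)\to\tau^*$, equivalently $nP(M_{I_1}>u_n)\to p\tau^*$. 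Then the defining convergence for the extremal index of \z\esp gives
\begin{eqnarray}\nn
P\left(\bigcap_{i=1}^{[n/p]}M_{I_{i-1},I_i}\leq u_n\right)=P\left(\max_{1\le i\le r_n}Z_i\le u_n\right)\to e^{-\theta_Z\tau^*}=e^{-\tau^*}.
\end{eqnarray}
Combining with the previous display, $P(M_n\leq u_n)\to e^{-\tau^*}$.

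Finally I would read off $\theta_X$ from (\ref{defei}). On the one hand $P(M_n\le u_n)\to e^{-\tau^*}$; on the other hand, since $u_n\equiv u_n^{(\tau)}$ for \x\esp we have $n(1-F(u_n))\to\tau$, and by definition of the extremal index of \x\esp (using the general theory that $\theta_X$ exists here because we have just exhibited the limit), $P(M_n\le u_n)\to e^{-\theta_X\tau}$. Matching exponents yields $\theta_X\tau=\tau^*$, i.e.
\begin{eqnarray}\nn
\theta_X=\frac{\tau^*}{\tau}=\frac{\tau^*}{\tau}\cdot\frac{p}{p}=\frac{p\tau^*}{p\tau}=\frac{\lim_{n\to\infty}nP(M_{I_1}>u_n)}{p\,\lim_{n\to\infty}nP(X_1>u_n)}=\lim_{n\to\infty}\frac{P(M_{I_1}>u_n)}{pP(X_1>u_n)},
\end{eqnarray}
using $nP(X_1>u_n)=n(1-F(u_n))\to\tau$ and $nP(M_{I_1}>u_n)\to p\tau^*$. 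This is the claimed formula. The step I expect to be the main obstacle — or at least the one requiring the most care — is making the relation $nP(M_{I_1}>u_n)\to p\tau^*$ rigorous: one must be sure that ``$u_n\equiv u_n^{(\tau^*)}$ for \z'' is being interpreted with the correct index set (the first $[n/p]$ cycles, not the first $n$), and that the expected cycle length $p$ enters with the right exponent; a secondary technical point is confirming that D$^{(1)}$($u_n$) for the cycle sequence indeed forces $\theta_Z=1$ via the D$'$-type argument recalled in the introduction, rather than merely $\theta_Z\le 1$.
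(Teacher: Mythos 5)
Your overall strategy coincides with the paper's: invoke Proposition \ref{p1} to replace $P(M_n\leq u_n)$ by the probability that the first $[n/p]$ cycles stay below $u_n$, use D($u_n$) and D$^{(1)}$($u_n$) to conclude $\theta_Z=1$, and match exponents. There is, however, a genuine error in how you read the hypothesis ``$u_n\equiv u_n^{(\tau^*)}$ for \z''. With the convention fixed in the introduction, normalized levels for a sequence are defined through the full index, i.e.\ $n\,(1-F_Z(u_n))=nP(M_{I_1}>u_n)\to\tau^*$ --- not $[n/p]\,P(M_{I_1}>u_n)\to\tau^*$ as you assert. The factor $1/p$ then enters precisely because only $[n/p]$ of the $Z_i$'s are involved: since $\theta_Z=1$,
$$
P\Bigl(\bigcap_{i=1}^{[n/p]}\{Z_i\leq u_n\}\Bigr)=\bigl(P(M_{I_1}\leq u_n)\bigr)^{[n/p]}+o(1)\sim\Bigl(1-\tfrac{\tau^*}{n}\Bigr)^{[n/p]}\to e^{-\tau^*/p},
$$
and matching with $P(M_n\leq u_n)\to e^{-\theta_X\tau}$ gives $\theta_X=\tau^*/(p\tau)$, exactly as stated. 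Your reading instead yields $P(M_n\leq u_n)\to e^{-\tau^*}$ and $\theta_X=\tau^*/\tau$, which contradicts the proposition; your final probability-ratio formula only comes out right because you simultaneously redefined $\tau^*$ to be $\lim_{n\to\infty} nP(M_{I_1}>u_n)/p$. The fix is one line: keep $nP(M_{I_1}>u_n)\to\tau^*$ and let the exponent $\tau^*/p$ appear automatically from raising $P(M_{I_1}\leq u_n)$ to the power $[n/p]$. The remaining ingredients of your argument --- verifying the boundedness hypothesis of Proposition \ref{p1} via stationarity of \z, and $\theta_Z=1$ from D$^{(1)}$($u_n$) via the representation (\ref{eiruns}) with $k=1$ --- are sound and are exactly what the paper uses.
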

\begin{proof}
We have that \z\esp has extremal index $\theta_Z=1$ and thus, by applying Proposition \ref{p1},
$$
\lim_{n\to\infty}P(M_n\leq u_n)=\lim_{n\to\infty}P\left(\bigvee_{i=1}^{[n/p]}Z_i\leq u_n\right)
=\lim_{n\to\infty}(P(M_{I_1}\leq u_n))^{[n/p]}=e^{-\tau^*/p}.
$$
Therefore, $\lim P(M_n\leq u_n)=e^{-\theta_X\tau}$, as $n\to\infty$, with
$$
\theta_X=\frac{\tau^*}{\tau p}=\lim_{n\to\infty}\frac{P(M_{I_1}>u_n)}{P(X_1>u_n)p}.
$$
\end{proof}

This is what happens in regenerative processes with independent cycles (see expression (4.2) in Rootz\'{e}n, \cite {root88} 1988, obtained directly).

\begin{pro}\label{peidep2}
Under the conditions of Proposition \ref{p1}, if \z\esp is stationary and satisfies D($u_n$) and D$^{(2)}$($u_n$) conditions for $u_n$ such that $un\equiv u_n^{(\tau)}$ for \x, $u_n\equiv u_n^{(\tau^*)}$ for \z\esp and $nP\left(M_{I_1}\leq u_n< M_{I_1,I_2}\right)\to \nu^*$, then \x\esp has extremal index
$$
\theta_X=\frac{\theta_Z\tau^*}{p\tau}=\lim_{n\to\infty}\frac{P(M_{I_1}\leq u_n<M_{I_{1},I_2})}{pP(X_1>u_n)}=\frac{\nu^*}{p\tau}.
$$
\end{pro}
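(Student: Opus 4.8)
The plan is to mirror the structure of the proof of Proposition \ref{peiindep}, replacing the extremal index $\theta_Z=1$ of the local-independence case by the genuine extremal index $\theta_Z$ of a process satisfying D$^{(2)}$($u_n$), and then to identify $\theta_Z$ via Nandagopalan's upcrossing representation. First I would invoke Proposition \ref{p1} to replace $P(M_n\leq u_n)$ by $P\bigl(\bigcap_{i=1}^{[n/p]}Z_i\leq u_n\bigr)$ up to $o(1)$. Since \z\esp is stationary and satisfies D($u_n$) together with D$^{(2)}$($u_n$), it has an extremal index $\theta_Z$, so with the normalization $u_n\equiv u_n^{(\tau^*)}$ for \z\esp one gets $P\bigl(\bigvee_{i=1}^{[n/p]}Z_i\leq u_n\bigr)\to e^{-\theta_Z\tau^*/p}$ as $n\to\infty$. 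Comparing with the defining relation (\ref{defei}) for \x, namely $P(M_n\leq u_n)\to e^{-\theta_X\tau}$, yields $\theta_X\tau=\theta_Z\tau^*/p$, i.e. the first equality $\theta_X=\theta_Z\tau^*/(p\tau)$.

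Next I would express $\theta_Z$ through the D$^{(2)}$($u_n$) representation of Nandagopalan recalled in Section \ref{sint}: for a stationary sequence satisfying D($u_n$) and D$^{(2)}$($u_n$),
$$
\theta_Z=\lim_{n\to\infty}\frac{P(Z_1\leq u_n<Z_2)}{P(Z_2>u_n)}=\lim_{n\to\infty}\frac{P(M_{I_0,I_1}\leq u_n<M_{I_1,I_2})}{P(M_{I_1,I_2}>u_n)},
$$
using that $Z_1=M_{I_0,I_1}=M_{I_1}$ and $Z_2=M_{I_1,I_2}$. By stationarity of \z\esp we have $P(M_{I_1,I_2}>u_n)=P(M_{I_1}>u_n)$, so $\theta_Z\tau^*=\theta_Z\lim n P(M_{I_1}>u_n)=\lim n P(M_{I_1}\leq u_n<M_{I_1,I_2})=\nu^*$ by hypothesis. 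Substituting this into the first equality gives $\theta_X=\nu^*/(p\tau)$, and rewriting $\nu^*=\lim n P(M_{I_1}\leq u_n<M_{I_1,I_2})$ together with $n P(X_1>u_n)\to\tau$ produces the limit form $\theta_X=\lim_{n\to\infty}P(M_{I_1}\leq u_n<M_{I_1,I_2})/(pP(X_1>u_n))$.

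The step I expect to require the most care is the bookkeeping of the two different normalizations: $u_n$ is simultaneously a $\tau$-level for \x\esp and a $\tau^*$-level for \z, and one must check these are compatible, i.e. that $\tau^*=\lim n P(M_{I_1}>u_n)$ is finite and positive. Finiteness follows from the boundedness hypothesis of Proposition \ref{p1} (which bounds $n\bigvee_{i\geq 1}P(M_{I_{i-1},I_i}>u_n)$, hence in particular $nP(M_{I_1}>u_n)$); positivity, and more generally the existence of the limit defining $\tau^*$, is part of the standing assumption that \z\esp admits $u_n\equiv u_n^{(\tau^*)}$ as normalized levels, so no genuine obstacle arises — it is a matter of stating these dependencies cleanly. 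A secondary point is to justify that the hypothesis $nP(M_{I_1}\leq u_n<M_{I_1,I_2})\to\nu^*$ is exactly what feeds Nandagopalan's ratio: since the denominator $nP(M_{I_1,I_2}>u_n)\to\tau^*$ by stationarity, the ratio converges to $\nu^*/\tau^*$, which we identify with $\theta_Z$, closing the argument.
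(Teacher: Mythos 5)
Your proposal is correct and follows essentially the same route as the paper: apply Proposition \ref{p1} to pass from $M_n$ to the maximum of the first $[n/p]$ cycles, use that \z\ under D($u_n$) and D$^{(2)}$($u_n$) has extremal index $\theta_Z=\nu^*/\tau^*$ via Nandagopalan's upcrossing representation, and match exponents to get $\theta_X=\theta_Z\tau^*/(p\tau)=\nu^*/(p\tau)$. The only difference is cosmetic (you identify $\theta_Z$ after applying Proposition \ref{p1} rather than before), and your extra remarks on the compatibility of the two normalizations are sound.
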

\begin{proof}
We have that \z\esp has extremal index
\begin{eqnarray}\label{peidep2.ei1}
\theta_Z=\lim_{n\to\infty}\frac{P(M_{I_1}\leq u_n<M_{I_{1},I_2})}{P(M_{I_1}>u_n)}=\frac{\nu^*}{\tau^*}
\end{eqnarray}
and thus, by applying Proposition \ref{p1},
$$
\lim_{n\to\infty}P(M_n\leq u_n)=\lim_{n\to\infty}P\left(\bigvee_{i=1}^{[n/p]}Z_i\leq u_n\right)
=e^{-\theta_Z\tau^*/p}=e^{-\tau\theta_X},
$$
with
\begin{eqnarray}\label{peidep2.ei3}
\theta_X=\frac{\theta_Z\tau^*}{p\tau}=\lim_{n\to\infty}\frac{P(M_{I_1}\leq u_n<M_{I_{1},I_2})}{pP(X_1>u_n)}.
\end{eqnarray}
\end{proof}
This is what happens in regenerative processes with $1-$dependent cycles.
(see comment after expression (4.2) in Rootz\'{e}n, \cite {root88} 1988).\\

Since
$$
nP\left(Z_1>u_n,\bigvee_{i=2}^{r_n}Z_i>u_n\right)=nP\left(Z_1>u_n,Z_2\leq u_n<\bigvee_{i=3}^{r_n}Z_i\right)+nP\left(Z_1>u_n,Z_2>u_n\right)
$$
and
$$
nP\left(Z_1>u_n,Z_2>u_n\right)=nP\left(Z_1>u_n\right)-
nP\left(Z_1>u_n\geq Z_2\right),
$$
we can remark that, for \z\esp satisfying D$^{(2)}$($u_n$), it holds that \z\esp satisfies D$^{(1)}$($u_n$) if and only if $\tau^*=\nu^*$, that is, the limiting mean number of exceedances is asymptotically equal to the limiting mean number of upcrossings (or downcrossings). Also, for any $k>2$, provided that \x\esp satisfies  D$^{(k)}$($u_n$), it holds D$^{(k-1)}$($u_n$) if and only if
$$
\lim_{n\to\infty}n\left(P\left(X_1>u_n,M_{1,k-1}\leq u_n\right)-P\left(X_1>u_n,M_{1,k}\leq u_n\right)\right)=0.
$$
This remark will help in the choice of a value for $k$, in Section \ref{sestim}, dedicated to the estimation of $\theta_X$.\\

The presented results also point out a way to obtain the limiting law of the maximum term of the first $\sum_{i=1}^nS_i$ r.v.'s of the sequence \x. In fact, for \x\esp and \s\esp as above, it holds that
$$
\begin{array}{rl}
&\dst P\left(M_{\sum_{i=1}^nS_i}\leq u_n\right)=\left(\bigvee_{j=1}^{\sum_{i=1}^nS_i}X_j\leq u_n\right)=P(Z_1\leq u_n,\hdots,Z_n\leq u_n)\vspace{0.35cm}\\
\dst\mathop{\to}_{n\to\infty}&\dst e^{-\theta_Z\tau^*}=e^{-p\tau\theta_X}
=\exp\left\{-E(S_1)\lim_{n\to\infty}nP(X_1>u_n,X_2\leq u_n,\hdots,X_k\leq u_n)\right\},
\end{array}
$$
if \x\esp satisfies D$^{(k)}$($u_n$).\\

We could state a general result analogous to the above Propositions \ref{peiindep} and \ref{peidep2}, by considering \z\esp satisfying D$^{(k)}$($u_n$) with $k>2$. However, our final goal is to relate the extremal index of a sequence \x\esp satisfying D$^{(k)}$($u_n$) with $k>2$, with the extremal index of an auxiliary sequence \z\esp satisfying D$^{(k)}$($u_n$) with $k\leq 2$. This will enable to take profit of the estimation of the extremal index under D$^{(1)}$($u_n$) or D$^{(2)}$($u_n$), after a suitable transformation of the data. The identification of clusters reduces then to the identification of blocks of consecutive exceedances. Next results discuss relations on long-range and local dependence conditions for \x\esp and \z, which can be easily obtained in the particular case of a deterministic $I_n$, $n\geq 0$, considered later for the main proposal of this work.

\begin{pro}\label{pcondD}
Let \x \esp be such that $\{X_i,\,i\in B\}$ is independent of $\{S_i,\,i\in A\}$ whenever $A\cap B\not=0$, for some renewal process $S=\{I_0=0,S_n=I_n-I_{n-1}\}_{n\geq 1}$ with $t\leq S_n\leq s$, $n\geq 1$. If, conditionally on $S$, \x\esp satisfies condition D($u_n$) with spacer sequence $l_n$, then \z,  defined by (\ref{z}), satisfies D($u_n$) with $l_n^*=[2l_n/t]$.
\end{pro}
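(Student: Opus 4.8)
The plan is to bound the mixing coefficient of $\{Z_n\}$, call it $\alpha^*_{n,l}$, at lag $l=l_n^*=[2l_n/t]$ by the mixing coefficient $\alpha_{\cdot,l_n}$ of $\{X_n\}$, using that each cycle $Z_i=M_{I_{i-1},I_i}$ occupies at least $t$ consecutive indices of the $X$-sequence, so a gap of $l_n^*$ whole cycles forces a gap of at least $t\,l_n^*\ge 2l_n-t\ge l_n$ in the $X$-indices (for $n$ large, after harmlessly enlarging $l_n$ so that $l_n\to\infty$; this only strengthens $D(u_n)$ for $\{X_n\}$).

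First I would rewrite the block events of $\{Z_n\}$ as block events of $\{X_n\}$ over $S$-measurable intervals: for admissible $i_1<i_1+p+l\le j_1<j_1+q\le n$,
$$\Big\{\textstyle\bigvee_{s=i_1+1}^{i_1+p}Z_s\le u_n\Big\}=\{M_{I_{i_1},I_{i_1+p}}\le u_n\},\qquad \Big\{\textstyle\bigvee_{s=j_1+1}^{j_1+q}Z_s\le u_n\Big\}=\{M_{I_{j_1},I_{j_1+q}}\le u_n\}.$$
Then I would condition on $S$. Since $\{X_n\}$ is independent of $S$, given $S=\sigma$ the endpoints $I_{i_1},I_{i_1+p},I_{j_1},I_{j_1+q}$ are deterministic and the conditional law of $\{X_n\}$ is its stationary law; so, writing $A=\{M_{I_{i_1},I_{i_1+p}}\le u_n\}$, $B=\{M_{I_{j_1},I_{j_1+q}}\le u_n\}$ and $g(m)=P(M_m\le u_n)$, by stationarity $P(A\mid S)=g\big(\sum_{m=i_1+1}^{i_1+p}S_m\big)$, $P(B\mid S)=g\big(\sum_{m=j_1+1}^{j_1+q}S_m\big)$, while $P(AB\mid S)$ depends only on the two interval lengths and on the gap $I_{j_1}-I_{i_1+p}$.

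The two key observations come next. (i) The gap satisfies $I_{j_1}-I_{i_1+p}=\sum_{m=i_1+p+1}^{j_1}S_m\ge t\,l_n^*\ge l_n$ eventually, so, applying $D(u_n)$ for $\{X_n\}$ at each realization $S=\sigma$ (where $P(\cdot\mid S=\sigma)$ is simply the stationary probability of the corresponding deterministic intervals, since $\{X_n\}$ is independent of $S$), and using that $\alpha_{\cdot,l}$ is non-increasing in $l$, one gets $|P(AB\mid S)-P(A\mid S)P(B\mid S)|\le\alpha_{N_n,l_n}$ a.s., where $N_n=O(n)$ bounds the largest $X$-index involved, namely $I_{j_1+q}\le s\,n$. (ii) Because $i_1+p<j_1+1$, the increments $S_{i_1+1},\dots,S_{i_1+p}$ and $S_{j_1+1},\dots,S_{j_1+q}$ are disjoint blocks of the i.i.d.\ sequence $\{S_m\}$, hence $P(A\mid S)$ and $P(B\mid S)$ are independent random variables, and therefore $E[P(A\mid S)P(B\mid S)]=E[P(A\mid S)]\,E[P(B\mid S)]=P(A)P(B)$.

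I would then combine these:
$$|P(AB)-P(A)P(B)|=\big|E[P(AB\mid S)]-E[P(A\mid S)P(B\mid S)]\big|\le E\big[\,|P(AB\mid S)-P(A\mid S)P(B\mid S)|\,\big]\le\alpha_{N_n,l_n},$$
and taking the supremum over admissible $i_1,p,j_1,q$ gives $\alpha^*_{n,l_n^*}\le\alpha_{N_n,l_n}$ with $N_n=O(n)$; since $\{X_n\}$ satisfies $D(u_n)$ and $l_n^*=[2l_n/t]=o(n)$, we conclude $\alpha^*_{n,l_n^*}\to 0$, i.e.\ $\{Z_n\}$ satisfies $D(u_n)$ with spacer $l_n^*$. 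The main obstacle is the bookkeeping in (i): making the conditioning-on-$S$ step rigorous and making sure the relevant $X$-index range stays linear in $n$, so that $\alpha_{N_n,l_n}\to 0$ genuinely follows from $D(u_n)$ (which, as usual in this setting, may force one to replace $l_n$ by a slightly larger $o(n)$ sequence beforehand). Observation (ii) — that stationarity of $\{X_n\}$ together with independence of disjoint blocks of the i.i.d.\ renewal increments reproduces the product term \emph{exactly} — is what turns a bound with an extra error into the clean inequality $\alpha^*_{n,l_n^*}\le\alpha_{N_n,l_n}$.
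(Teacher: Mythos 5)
Your proof is correct and follows essentially the same route as the paper's: condition on $S$ to turn the $Z$-block events into $X$-block events whose index sets are separated by at least $t\,l_n^*\geq l_n$, bound the conditional covariance by $\alpha_{\cdot,l_n}$ via the (conditional) D($u_n$) condition, and use independence of the disjoint i.i.d.\ blocks $S_I$ and $S_J$ to recover the product term exactly. If anything, your gap computation $\sum_{m}S_m\geq t\,l_n^*$ and your explicit flagging of the $N_n=O(n)$ index-range issue are slightly more careful than the corresponding steps in the paper.
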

\begin{proof}
Let $I=\{i_1,\hdots,i_p\}$ and $J=\{j_1,\hdots,j_q\}$ with $1\leq i_1<\hdots<i_p<i_p+l_n^*<j_1<\hdots<j_q\leq n$ and $l_n^*=[2l_n/t]$. Then
\begin{eqnarray}\nn
\begin{array}{rl}
&\left|P\left(\bigvee_{i\in I}Z_i\leq u_n,\bigvee_{i\in J}Z_i\leq u_n\right)-P\left(\bigvee_{i\in I}Z_i\leq u_n\right)P\left(\bigvee_{i\in J}Z_i\leq u_n\right)\right|\vspace{0.35cm}\\
=& \left|E\left(P\left(\bigvee_{i\in I^*(S_I)}X_i\leq u_n,\bigvee_{i\in J^*(S_J)}X_i\leq u_n\right)|S\right)\right.\vspace{0.35cm}\\
&\left.-E\left(P\left(\bigvee_{i\in I^*(S_I)}X_i\leq u_n\right)|S\right)E\left(P\left(\bigvee_{i\in J^*(S_J)}X_i\leq u_n\right)|S\right)\right|,
\end{array}
\end{eqnarray}
where $S_A$ denotes the vector of r.v.'s $S_i$, $i\in A$, and $I^*(S_I)$ and $J^*(S_J)$ are separated by at least $l_n$, since we have $I_{i_p}\leq i_ps$, $I_{j_1-1}\geq (j_1-1)t$ and thus $(j_1-1)t+1-i_ps\geq l_n$, for large enough $n$.
Therefore, and meeting the given assumptions, the previous expression is upper bounded by
\begin{eqnarray}\nn
\begin{array}{rl}
& \left|E\left(P\left(\bigvee_{i\in I^*(S_I)}X_i\leq u_n,\bigvee_{i\in J^*(S_J)}X_i\leq u_n\right)|S_{I\cup J}\right)\right.\vspace{0.35cm}\\
&\left.-E\left(P\left(\bigvee_{i\in I^*(S_I)}X_i\leq u_n\right)P\left(\bigvee_{i\in J^*(S_J)}X_i\leq u_n\right)|S_{I\cup J}\right)\right|
\vspace{0.35cm}\\
+&\left|E\left(P\left(\bigvee_{i\in I^*(S_I)}X_i\leq u_n\right)P\left(\bigvee_{i\in J^*(S_J)}X_i\leq u_n\right)|S_{I\cup J}\right)\right.
\vspace{0.35cm}\\
&\left.-E\left(P\left(\bigvee_{i\in I^*(S_I)}X_i\leq u_n\right)|S_{I}\right)E\left(P\left(\bigvee_{i\in J^*(S_J)}X_i\leq u_n\right)|S_{J}\right)\right|\vspace{0.35cm}\\
\leq &\alpha_{n,l_n}\,.
\end{array}
\end{eqnarray}
\end{proof}

\begin{pro}\label{peidep2.2}
Let \x \esp be a stationary sequence  and \z\esp  defined by (\ref{z}), for some renewal process $S=\{I_0=0,S_n=I_n-I_{n-1}\}_{n\geq 1}$ with $t\leq S_n\leq s$, $n\geq 1$.
\begin{itemize}
\item[(a)] If \z\esp satisfies D$^{(2)}$($u_n$) with $r_n=[n/k_n]$, then \x\esp satisfies D$^{(2s-t+1)}$($u_n$) with the same $r_n$.
\item[(b)] If $2t>s$ and \x\esp satisfies D$^{(k)}$($u_n$) for some $k\leq 2t-s+1$, with $r_n=[n/k_n]$, then \z\esp satisfies D$^{(2)}$($u_n$) with $r_n^*=[r_n/s]$.
\end{itemize}
\end{pro}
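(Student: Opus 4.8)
The plan is to treat the two assertions separately, in each case passing between an exceedance pattern of $\{X_n\}$ and one of the cycle maxima $Z_n=M_{I_{n-1},I_n}$, the decisive point being to control which cycle a given index falls into. I write $D^{(2)}(u_n)$ for $\{Z_n\}$ with window $w$ in the form $nP\bigl(Z_1>u_n,\,Z_2\le u_n<\bigvee_{l=3}^{w}Z_l\bigr)\to 0$, and I use throughout that $\{S_n\}$ is independent of $\{X_n\}$ and that $t\le S_n\le s$ (so in particular $t\le s$).

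For part (a), put $N=2s-t+1$ and $A_n=\{X_1>u_n,\,M_{1,N}\le u_n<M_{N,r_n}\}$; the goal is $nP(A_n)\to 0$. For $i\in\{t,\dots,s\}$ let $A_n^{(i)}$ be $A_n$ translated so that the leading exceedance sits at index $i$ rather than $1$, i.e.\ $A_n^{(i)}=\{X_i>u_n,\,M_{i,i+N-1}\le u_n<M_{i+N-1,i+r_n-1}\}$. By stationarity of $\{X_n\}$ and independence from $S_1$, $P(A_n^{(i)}\mid S_1=i)=P(A_n^{(i)})=P(A_n)$. On $A_n^{(i)}\cap\{S_1=i\}$ the first cycle is $\{1,\dots,i\}$, so $Z_1\ge X_i>u_n$; the block $\{i+1,\dots,i+N-1\}$ lies below $u_n$ and, since $S_2\le s\le N-1$, contains the whole second cycle $\{i+1,\dots,i+S_2\}$, hence $Z_2\le u_n$; and the exceedance guaranteed in $\{i+N,\dots,i+r_n-1\}$ occurs at an index strictly beyond $i+S_2=I_2$, so in a cycle of order $b\ge 3$, with $b\le r_n$ (immediate for $t\ge 2$ and large $n$, since cycles have length $\ge t$; for $t=1$ after the harmless enlargement of the window by $O(s)$, which does not affect $D^{(2)}(u_n)$). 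Thus $A_n^{(i)}\cap\{S_1=i\}\subseteq B_n:=\{Z_1>u_n,\,Z_2\le u_n<\bigvee_{l=3}^{r_n}Z_l\}$, so $P(A_n)=P(A_n^{(i)}\mid S_1=i)\le P(B_n\mid S_1=i)$; multiplying by $P(S_1=i)$ and summing over $i$ gives $P(A_n)\le P(B_n)$, and $D^{(2)}(u_n)$ for $\{Z_n\}$ finishes it.

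For part (b), assume $2t>s$ (so a choice $2\le k\le 2t-s+1$ is available; note $k-1\le 2t-s\le t$ since $t\le s$) and let $\{X_n\}$ satisfy $D^{(k)}(u_n)$ with window $r_n$. Set $r_n^*=[r_n/s]$ and $B_n^Z=\{Z_1>u_n,\,Z_2\le u_n<\bigvee_{l=3}^{r_n^*}Z_l\}$; the goal is $nP(B_n^Z)\to 0$. Condition on the increment sequence $\mathbf S=(S_1,S_2,\dots)$, which fixes the cycle boundaries $0=I_0<I_1<I_2<\cdots$ and, by independence, leaves the law of $\{X_n\}$ unchanged. On $\{Z_1>u_n,\,Z_2\le u_n\}$ let $j^*$ be the largest index in $\{1,\dots,I_1\}$ with $X_{j^*}>u_n$; then $X_p\le u_n$ for all $p\in\{j^*+1,\dots,I_2\}$, a block of length $I_2-j^*\ge I_2-I_1=S_2\ge t\ge k-1$. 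Since $\bigvee_{l=3}^{r_n^*}Z_l>u_n$ forces an exceedance at some index $m\in\{I_2+1,\dots,I_{r_n^*}\}$ with $I_{r_n^*}\le s\,r_n^*\le r_n$, the event (on $\{j^*=j_0\}$) is contained in $\{X_{j_0}>u_n,\,M_{j_0,j_0+k-1}\le u_n<M_{j_0+k-1,j_0+r_n-1}\}$, whose probability equals $q_n:=P(X_1>u_n,\,M_{1,k}\le u_n<M_{k,r_n})$ by stationarity. Summing over the at most $I_1\le s$ values of $j_0$ and averaging over $\mathbf S$ yields $P(B_n^Z)\le s\,q_n$, so $nP(B_n^Z)\le s\,nq_n\to 0$ by $D^{(k)}(u_n)$ for $\{X_n\}$.

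The crux of part (a) is the realignment by stationarity that places the leading exceedance at the right endpoint of the first cycle: without it the second cycle may protrude past position $N$ into the region where $A_n$ tolerates exceedances, $Z_2\le u_n$ could fail, and $N=2s-t+1$ would be too small; the rest is the routine bookkeeping that converts the window $r_n$ for $\{X_n\}$ into one of comparable size for $\{Z_n\}$. In part (b) the delicate point is choosing $j^*$ as the \emph{last} exceedance of the first cycle, which is exactly what makes the ensuing run of non-exceedances long enough ($\ge S_2\ge t\ge k-1$) to absorb the gap $M_{1,k}\le u_n$ demanded by $D^{(k)}(u_n)$; a first-exceedance or whole-cycle reference point would leave too short a gap, and it is here that the hypothesis $k\le 2t-s+1$ (with $t\le s$) is used.
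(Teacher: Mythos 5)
Your argument is correct in substance and rests on the same basic mechanism as the paper's proof --- an event inclusion translating an exceedance pattern of \x\esp into one of \z\esp and back --- but the execution is genuinely different. The paper never conditions on the renewal sequence: in (a) it enlarges the exceedance window from $r_n$ to $tr_n-t+1$, shifts the whole event by $t-1$ so the leading exceedance sits at position $t\le I_1$, and then uses only the sure bounds $I_1\ge t$, $I_2\le 2s$, $I_{r_n}\ge tr_n$ to get a pathwise inclusion into the $Z$-event; in (b) it decomposes over the last exceedance position $j\in\{1,\dots,s\}$ and uses $I_1\le s$, $I_2\ge 2t$, $I_{r_n^*}\le sr_n^*$. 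In particular the paper's proof requires no independence between $\{S_n\}$ and \x. You instead condition on $S_1=i$ (part (a)) or on all of $\mathbf{S}$ (part (b)) and realign to the exact cycle boundary $I_1$, which obliges you to assume independence of $\{S_n\}$ and \x\ --- a hypothesis not stated in the proposition, though it is the intended setting and appears in the neighbouring results. The payoff of your version is a sharper part (b): your guaranteed run of non-exceedances after the last exceedance $j^*$ of the first cycle has length $I_2-j^*\ge S_2\ge t$, so you only need $k-1\le t$, which is weaker than the stated $k-1\le 2t-s$ whenever $s>t$.

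Two repairs are needed. First, in part (a) the case split on $t$ is unnecessary, and the fix you propose for $t=1$ is not actually justified: enlarging the window in the D$^{(2)}$($u_n$) condition for \z\esp from $r_n$ to $r_n+O(s)$ adds terms bounded only by $n\cdot O(s)\cdot P(Z_1>u_n)\to O(s)\,\tau^*$, which does not vanish without a further mixing estimate (the hypotheses give $k_n\alpha_{n,l_n}\to0$, not $n\alpha_{n,l_n}\to0$). Fortunately no enlargement is needed: the exceedance sits at some $m\le i+r_n-1$, while $I_{r_n}=i+\sum_{j=2}^{r_n}S_j\ge i+(r_n-1)t\ge i+r_n-1\ge m$ for every $t\ge1$, so it always falls within the first $r_n$ cycles. (The paper sidesteps this by enlarging on the $X$-side, from $r_n$ to $tr_n-t+1$, which only increases the event and is exactly covered by $r_n$ cycles.) Second, either add the independence of $\{S_n\}$ and \x\esp as a standing assumption, or replace the conditioning by the deterministic bounds $t\le S_j\le s$ as in the paper if you want the statement exactly as given.
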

\begin{proof}
To obtain (a) we take into account the following inequalities:
\begin{eqnarray}\nn
\begin{array}{rl}
&nP\left(X_1>u_n,M_{1,2s-t+1}\leq u_n<M_{2s-t+1,r_n}\right)\vspace{0.35cm}\\
\leq & nP\left(X_1>u_n,M_{1,2s-t+1}\leq u_n<M_{2s-t+1,tr_n-t+1)}\right)\vspace{0.35cm}\\
= &nP\left(X_t>u_n,M_{t,2s}\leq u_n<M_{2s,tr_n}\right)\vspace{0.35cm}\\
\leq &nP\left(M_{I_1}>u_n,M_{I_1,I_2}\leq u_n<\bigvee _{i=3}^{r_n}M_{I_{i-1},I_i}\right)\vspace{0.35cm}\\
= &o(1),\,n\to\infty\,.
\end{array}
\end{eqnarray}
For (b) we have:
\begin{eqnarray}\nn
\begin{array}{rl}
&nP\left(M_{I_1}>u_n,M_{I_1,I_2}\leq u_n<\bigvee _{i=3}^{r_n^*}M_{I_{i-1},I_i}\right)\vspace{0.35cm}\\
\leq & nP\left(M_s>u_n,M_{s,2t}\leq u_n<\bigvee _{i=2t+1}^{r_n^*s}X_j\right)\vspace{0.35cm}\\
\leq &n\sum_{j=1}^sP\left(X_j>u_n,M_{j,s}\leq u_n,M_{s,2t}\leq u_n<M_{2t,r_n}\right)
\end{array}
\end{eqnarray}
and each of the $s$ terms in the sum above tends to zero by the D$^{(2t-s+1)}$($u_n$) condition for \x.
\end{proof}

We state now a result on the ``clustered" process \z\esp that resumes the path to obtain its extremal index $\theta_Z$ and to recover $\theta_X$ for the ``declustered" process \x, enhancing that to count the mean number of upcrossings (or downcrossings) for $\{Z_1,\hdots,Z_n\}$ is asymptotically equivalent to count the mean number of runs $\{X_i>u_n,X_{i+1}\leq u_n,\hdots,X_{i+k-1}\leq u_n\}$, $i\leq np$.

\begin{cor}
Let \x\esp and $S$ be in the conditions of Proposition \ref{pcondD} and that $E(S_n)=p$. Suppose that $u_n\equiv u_n^{(\tau)}$ for \x\esp and \x\esp satisfies D$^{(k)}$($u_n$) for some $k\leq 2t-s+1$. Then
\begin{itemize}
\item[(a)] \z\esp  defined by (\ref{z}) satisfies D($u_n$) and D$^{(2)}$($u_n$) conditions.
\item[(b)] If $u_n\equiv u_n^{(\tau^*)}$ for  \z\esp and there exists $\dst\mathop{\lim}_{n\to\infty}nP\left(M_{I_1}\leq u_n<M_{I_1,I_2}\right)=\nu^*$ then it holds that
    $$
    \theta_Z=\frac{\nu^*}{\tau^*},\,\theta_X=\frac{\theta_Z\tau^*}{p\tau}
    $$
     and
    $$
    \lim_{n\to\infty}nP\left(M_{I_1}\leq u_n<M_{I_1,I_2}\right)=p\lim_{n\to\infty}nP\left(X_1>u_n,X_2\leq u_n,\hdots,X_k\leq u_n\right).
    $$
\end{itemize}
\end{cor}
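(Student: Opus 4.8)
The plan is to deduce this corollary by combining Propositions \ref{pcondD}, \ref{peidep2.2}(b) and \ref{peidep2}, after checking that their hypotheses are in force here. For part~(a): since \x\esp and $S$ are in the conditions of Proposition \ref{pcondD}, the conditional D($u_n$) condition for \x\esp transfers to D($u_n$) for \z\esp with spacer sequence $l_n^*=[2l_n/t]$. For the local condition, note that $k\leq 2t-s+1$ together with $k\geq 1$ forces $s\leq 2t$; assuming $s<2t$ (the degenerate case $s=2t$ forces $k=1$, hence $\theta_X=1$, and is best handled through the D$^{(1)}$ route of Proposition \ref{peiindep}), Proposition \ref{peidep2.2}(b) applies and shows that \z\esp satisfies D$^{(2)}$($u_n$), with $r_n^*=[r_n/s]$. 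The only point requiring care is that the block and spacer sequences supplied by Propositions \ref{pcondD} and \ref{peidep2.2} can be chosen compatibly --- taking $k_n^*\sim sk_n$ one gets $k_n^*\alpha^{Z}_{n,l_n^*}\leq s(k_n\alpha_{n,l_n})\to 0$ and $k_n^*l_n^*/n\sim(2s/t)(k_nl_n/n)\to 0$ --- so that \z\esp genuinely satisfies D($u_n$) and D$^{(2)}$($u_n$) under one common block scheme; this gives (a).

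For part~(b) I would first record two preliminary facts about \z. First, \z\esp is stationary: by the stationarity of \x, the i.i.d.\ structure of the increments $\{S_n\}_{n\geq 1}$, and the independence assumption of Proposition \ref{pcondD}, shifting the block index leaves the joint law of the $Z$'s unchanged. Second, the boundedness hypothesis of Proposition \ref{p1} holds: since $t\leq S_n\leq s$, conditioning on $S_1$ and using stationarity together with a union bound gives $P(M_{I_1}>u_n)\leq sP(X_1>u_n)$, whence $\left\{n\bigvee_{i\geq 1}P(M_{I_{i-1},I_i}>u_n)\right\}_{n\geq 1}$ is bounded (by $s\tau+o(1)$). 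With part~(a) and these two facts, \z\esp is a stationary sequence satisfying D($u_n$) and D$^{(2)}$($u_n$), $u_n$ is normalized for \x\esp (level $\tau$) and for \z\esp (level $\tau^*$), and $nP(M_{I_1}\leq u_n<M_{I_1,I_2})\to\nu^*$ by hypothesis; hence Proposition \ref{peidep2} applies verbatim and yields $\theta_Z=\nu^*/\tau^*$ and $\theta_X=\theta_Z\tau^*/(p\tau)$.

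It remains to establish the last equality, the only genuinely new ingredient. Since \x\esp satisfies D$^{(k)}$($u_n$) and $u_n\equiv u_n^{(\tau)}$, the representation (\ref{eiruns}) of Chernick \emph{et al.} gives
$$
\theta_X=\lim_{n\to\infty}P(M_{1,k}\leq u_n\,|\,X_1>u_n)=\tau^{-1}\lim_{n\to\infty}nP\bigl(X_1>u_n,M_{1,k}\leq u_n\bigr),
$$
using $nP(X_1>u_n)\to\tau$ and $\{M_{1,k}\leq u_n\}=\{X_2\leq u_n,\ldots,X_k\leq u_n\}$. Comparing this with $\theta_X=\nu^*/(p\tau)$ obtained above and cancelling $\tau$ gives $\nu^*=p\lim_{n\to\infty}nP(X_1>u_n,X_2\leq u_n,\ldots,X_k\leq u_n)$, which is exactly the claimed identity. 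The main obstacle here is organizational rather than conceptual: one has to thread together the several asymptotic schemes --- $k_n,l_n,r_n$ for \x\esp and $l_n^*,r_n^*$ for \z\esp --- so that every invoked proposition speaks of the same normalizing levels and block structure, and one must dispose of the arithmetic corner case $s=2t$ separately.
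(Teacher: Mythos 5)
Your proposal is correct and follows exactly the route the paper intends for this corollary (which it states without proof): part (a) from Propositions \ref{pcondD} and \ref{peidep2.2}(b), part (b) from Proposition \ref{peidep2} after checking stationarity of \z\ and the boundedness hypothesis of Proposition \ref{p1}, and the final identity by matching $\theta_X=\nu^*/(p\tau)$ against the Chernick \emph{et al.} representation (\ref{eiruns}). Your extra care about the compatibility of the block/spacer sequences and the degenerate case $s=2t$ goes slightly beyond what the paper records, but introduces no gap.
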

We now focus on the particular case of $I_n=n(k-1)$, $n\geq 0$, for some $k>2$. Therefore, we have $s=t=k-1$. If is this the case then, from the previous result, condition D$^{(2)}$($u_n$) for \z\esp implies condition D$^{(k)}$($u_n$) for \x\esp with the same length $r_n$ to model ``local behavior". Otherwise, the validity of condition D$^{(k)}$($u_n$) for \x\esp leads to condition D$^{(2)}$($u_n$) for \z\esp with $r_n^*=[r_n/(k-1)]$. We can then state the following corollary, which can also be proved directly.

\begin{cor}
Let \x\esp be a stationary sequence and \z\esp  defined by (\ref{z}) with $I_n=n(k-1)$, $n\geq 0$, for some $k>2$.Then \z\esp satisfies condition D$^{(2)}$($u_n$) if and only if \x\esp satisfies condition D$^{(k)}$($u_n$).
\end{cor}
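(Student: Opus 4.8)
The plan is to read off the corollary from the transfer results already proved, Propositions~\ref{pcondD} and~\ref{peidep2.2}, specialised to the deterministic renewal process $I_n=n(k-1)$. For that process $S_n\equiv k-1$, hence $s=t=p=k-1$; the hypothesis that $\{X_i\}$ be independent of $\{S_i\}$ in Proposition~\ref{pcondD} is then automatic and the conditioning on $S$ vacuous. Two index identities make everything fit exactly: $2s-t+1=2(k-1)-(k-1)+1=k$ and $2t-s+1=2(k-1)-(k-1)+1=k$, and the constraint $2t>s$ in Proposition~\ref{peidep2.2}(b) becomes $k-1>0$, true since $k>2$. So, at the level of the local limit, the two implications follow directly from parts (a) and (b) of Proposition~\ref{peidep2.2}; what needs a little care is the long-range condition D($u_n$) and the matching of the auxiliary sequences $\{k_n\}$, $\{l_n\}$ of (\ref{kn}).

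For the ``if'' direction, assume \x\esp satisfies D$^{(k)}$($u_n$). Then \x\esp satisfies D($u_n$), so Proposition~\ref{pcondD} gives D($u_n$) for \z\esp (with spacer $l_n^*=[2l_n/(k-1)]$ and, from its proof, $\alpha^Z_{n,l_n^*}\le\alpha^X_{n,l_n}$), while Proposition~\ref{peidep2.2}(b), applied with the admissible value $k'=k=2t-s+1$, delivers the local limit of D$^{(2)}$($u_n$) for \z\esp with $r_n^*=[r_n/(k-1)]$. Choosing $k_n^Z$ so that $[n/k_n^Z]=r_n^*$, i.e.\ $k_n^Z\approx(k-1)k_n^X$, condition (\ref{kn}) for \z\esp follows from (\ref{kn}) for \x\esp together with $\alpha^Z_{n,l_n^*}\le\alpha^X_{n,l_n}$, since $k-1$ is a fixed constant. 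Hence \z\esp satisfies D$^{(2)}$($u_n$).

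For the ``only if'' direction, assume \z\esp satisfies D$^{(2)}$($u_n$). Proposition~\ref{peidep2.2}(a), with $2s-t+1=k$, gives the local limit of D$^{(k)}$($u_n$) for \x\esp with the same $r_n$; the piece not supplied by the earlier results is D($u_n$) for \x, which I would prove directly. Given index groups $I$, $J$ for the $X$-sequence separated by a spacer $l_n$, I would sandwich each event $\{M_{\cdot}\le u_n\}$ between the intersection of $\{Z_j\le u_n\}$ over the whole $(k-1)$-blocks it contains (an enlargement $\widehat A\supseteq A$) and the intersection over those whole blocks contained in it (a reduction $\widetilde A\subseteq A$); both $P(\widehat A\setminus A)$ and $P(A\setminus\widetilde A)$ are bounded, by a union bound over the at most $2(k-1)$ boundary variables of the group, by $O\!\big(k(1-F(u_n))\big)=o(1)$, using $1-F(u_n)\sim\tau/n$. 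Since $\widehat A$, $\widetilde A$ are measurable with respect to the $Z_j$'s and the corresponding $Z$-groups are separated by $l_n^*\approx l_n/(k-1)$, D($u_n$) for \z\esp yields $\alpha^X_{n,l_n}\le\alpha^Z_{n,l_n^*}+O\!\big(k(1-F(u_n))\big)\to0$ for an appropriate $l_n=o(n)$; the accompanying choice $k_n^X=k_n^Z$, $l_n=(k-1)(l_n^*+2)$ makes (\ref{kn}) transfer as well. Hence \x\esp satisfies D$^{(k)}$($u_n$).

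The only genuinely new ingredient---and the main obstacle---is the D($u_n$) half of the ``only if'' direction: pushing the long-range condition down from the coarse block-maxima sequence \z\esp to \x\esp needs the boundary-block correction above, not a black-box appeal to Propositions~\ref{pcondD}--\ref{peidep2.2}. Everything else is the fussy but harmless tracking of the auxiliary sequences, harmless precisely because the block length $k-1$ is a fixed constant, so scaling $\{k_n\}$ and $\{l_n\}$ by $k-1$ leaves (\ref{kn}) intact.
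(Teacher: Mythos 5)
Your proof is correct and follows the paper's intended route: the corollary is exactly Proposition \ref{peidep2.2} specialised to the deterministic renewal process $S_n\equiv k-1$, where $s=t=k-1$ gives $2s-t+1=2t-s+1=k$, so parts (a) and (b) yield the two implications (the paper states only this reduction and remarks that the corollary "can also be proved directly"). Your additional bookkeeping --- transferring D($u_n$) and the auxiliary sequences $\{k_n\},\{l_n\}$ between \x\esp and \z, in particular the boundary-block sandwich with error $O\left(k(1-F(u_n))\right)=o(1)$ for pushing the mixing condition from the block-maxima sequence back to \x\esp --- is a correct filling-in of details the paper leaves implicit, not a different approach.
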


Thus, under conditions of Proposition \ref{peidep2.2} and according to (\ref{peidep2.ei3}), the extremal index can be written as
\begin{flalign}
&\theta_X=\dst \frac{\theta_Z\tau^*}{(k-1)\tau}\vspace{0.35cm}\label{eidir}\\
=&\dst\lim_{n\to\infty}\frac{P(M_{I_1}\leq u_n<M_{I_{1},I_2})}{(k-1)P(X_1>u_n)}=\dst\lim_{n\to\infty}\frac{P(M_{k-1}\leq u_n<M_{k-1,2(k-1)})}{(k-1)P(X_1>u_n)}\nn.
\end{flalign}

By using the stationarity, this limit can be rewritten as the one obtained in Chernick \emph{et al.} (\cite{chern+91}, 1991) under condition D$^{(k)}$($u_n$) and given in (\ref{eiruns}).
However, representation (\ref{eidir}) allows to estimate $\theta_X$ through the estimation of $\theta_Z$ for the cycles $Z_i=M_{I_{i-1},I_i}$, $i\geq 1$, for which D$^{(2)}$($u_n$) holds, as we will present in Section \ref{sestim}.\\

Here we illustrate the above results with finite moving maxima processes and in the next section we devote special attention to the local dependence in this kind of processes.

\begin{ex}\label{ex1}
\emph{Consider the moving maximum process, $X_n = \bigvee_{j=0}^3\alpha_jY_{n-j}$, $\alpha_0=2/6$, $\alpha_1=1/6$ and $\alpha_2=3/6$, $n\geq 1$, with
sequence $\{Y_n\}_{n\geq -1}$ independent and having standard Fréchet marginal distribution, $F_{Y}=\exp(-1/x)$, $x>0$. This stationary sequence satisfies  D$^{(3)}$($u_n$) for levels $u_n=n/\tau$, $\tau>0$, as will be seen in the next section and $\theta_X=((2/6)\vee(1/6)\vee (3/6))=1/2$ (see Weissman and Cohen \cite{weis+cohen95}, 1995).  For $Z_n=X_{2n-1}\vee X_{2n}$, we have $nP(Z_1>u_n)\to (5/3)\tau=\tau^*$, as $n\to\infty$, since
$$
P(Z_1\leq u_n)=F_{Y}(u_n)F_{Y}(3u_n/2).
$$
Observe also that
$$
P(Z_2\leq u_n,Z_1\leq u_n)=F_{Y}(u_n/2)F_{Y}(3u_n/2)
$$
and, provided that D$^{(2)}$($u_n$) holds for \z,
we have
$$
\theta_Z=\lim_{n\to\infty}P(Z_2\leq u_n|Z_1>u_n)=3/5.
$$
By applying (\ref{eidir}), we obtain $\theta_X=1/2$.}\hfill $\square$\\
\end{ex}

We have seen that, for every stationary sequence \x\esp satisfying D$^{(k)}$($u_n$), $k>2$, we can build a stationary sequence \z\esp satisfying D$^{(2)}$($u_n$) by taking the maxima of $k-1$ consecutive variables of sequence \x. For big values of $k$, such aggregation can result in reduced accuracy in the estimation of $\theta_X$ via the sample based on \z, as will be pointed in Section \ref{sestim}. The Proposition \ref{peidep2.2} also states that it can be considered mixtures of big cycles of several lengths in order to build the sequence \z\esp satisfying D$^{(2)}$($u_n$), as it is illustrated in the next example.

\begin{ex}\label{ex1}
\emph{Let $S=\{I_0=0,S_n=I_n-I_{n-1}\}_{n\geq 1}$ be a sequence of independent r.v.'s uniformly distributed on $\{k,k+2\}$, with a fixed $k\geq 6$, and independent of \x. For \x\esp take a stationary sequence satisfying D$^{(5)}$($u_n^{(\tau)}$) and D($u_n^{(\tau)}$), for instance, a MM process with signatures $\alpha_{l,j}$ as given in the next section and $u_n^{(\tau)}=n/\tau$. Let $Z_n$ be as in (\ref{z}), which is stationary and also satisfies D($u_n$). Since $2k-(k+2)+1\geq 5$, then \z\esp satisfies condition D$^{(2)}$($u_n$). It holds that
$$
\tau^*=\lim_{n\to\infty}\frac{1}{2}\sum_{s\in\{k,k+2\}}
nP\left(\bigvee_{i=1}^sX_i>u_n\right),
$$
$$
\nu^*=\lim_{n\to\infty}\frac{1}{4}\sum_{s_1,s_2\in\{k,k+2\}}
nP\left(\bigvee_{i=1}^{s_1}X_i\leq u_n<\bigvee_{i=s_1+1}^{s_1+s_2}X_i\right),
$$
$\theta_Z=\frac{\nu^*}{\tau^*}$ and $\theta_X=\frac{\nu^*}{(k+1)\tau}$.
}\hfill $\square$\\
\end{ex}

\section{Condition D$^{(k)}$($u_n$) for max-stable processes}\label{smm}

A moving maxima process (MM) is defined as
\begin{eqnarray}\label{mm}
X_n=\bigvee_{l\geq 1}\bigvee_{-\infty<j<\infty}\alpha_{l,j}Y_{l,n-j},\,n\geq 1
\end{eqnarray}
where $\{Y_{l,j},\,l\geq 1,\,-\infty<j<\infty\}$ is an i.i.d.~sequence of r.v.'s, usually unit Fréchet and $\{\alpha_{l,j},\,l\geq 1,\,-\infty<j<\infty\}$ are non negative constants (usually denoted signatures) such that $\sum_{l\geq 1}\sum_{-\infty<j<\infty}\alpha_{l,j}=1$ (Deheuvels \cite{deheuv83} 1983, Davis and Resnick \cite{dav+res89} 1989, Smith and Weissman \cite{smi+weis96} 1996, Hall \emph{et al.} \cite{hall+02} 2002, Meinguet \cite{meinguet12} 2012).

Under the condition $\sum_{-\infty<j<\infty}\sum_{l\geq 1}l\alpha_{l,j}<\infty$, the MM process is strong-mixing (Meinguet, \cite{meinguet12} 2012) and therefore it satisfies D($u_n$).

An interesting feature of these processes is that the transformation of $\{Y_{l,j},\,l\geq 1,\,-\infty<j<\infty\}$  induces a dependence structure with extremes in temporal clusters. Any stationary process with finite distributions of multivariate extreme value type can be approximated by an MM process with marginals of extreme value type (Hall \emph{et al.} \cite{hall+02} 2002). Examples of finite MM processes (i.e., with $l$ and $j$ finite) are not difficult to deal with and are often used to illustrate long range and local dependence conditions within extreme values. The extremal index is directly obtained through $\lim_{n\to\infty}P(M_n\leq n/\tau)$, $\tau>0$, even for infinite MM and thus avoid the validity of some D$^{(k)}$ condition. In Meinguet (\cite{meinguet12} 2012) it was presented a nice finite-cluster condition which prevents a sequence of extremes occurring in MM from being infinite over time. However, it doesn't enable a representation for $\theta$ from finite marginal distributions of the process. The local dependence conditions brings us enlightenment about the clustering structure of extreme values. Any finite MM is $m$-dependent for some positive integer $m$ and thus D$^{(k)}$ holds at least for some $k\geq m$. From simple examples, we know that small changes in the values of coefficients $\alpha_{l,j}$ may lead to large diferences within the clusters structure. Hence, it is raised the question of which conditions must satisfy $\alpha_{l,j}$ so that some D$^{(k)}$ holds for an MM process. The next result presents a necessary and sufficient condition.

\begin{pro}\label{pmmdk}
Let \x\esp be an MM process as defined in (\ref{mm}), where $\{Y_{l,j},\,l\geq 1,\,-\infty<j<\infty\}$ is an i.i.d.~sequence of unit Fréchet r.v.'s. Then
\begin{itemize}
\item[(a)] \x\esp satisfies condition D$^{(k)}$($u_n$), $k\geq 2$, if and only if, for all $l\geq 1$ and $-\infty<j<\infty$,
\begin{eqnarray}\label{dkmm}
\alpha_{l,j}\wedge \left(\bigvee_{s\geq k+1}\alpha_{l,j+s-1}\right)\leq\bigvee_{s=2}^k\alpha_{l,j+s-1},
\end{eqnarray}
where $x\wedge y$ denotes $\min(x,y)$.
\item[(b)] \x\esp satisfies condition D$^{(1)}$($u_n$) if and only if, for all $l\geq 1$ there is only one $-\infty<j<\infty$ such that $\alpha_{l,j}>0$.
\end{itemize}
\end{pro}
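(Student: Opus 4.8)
The plan is to reduce condition D$^{(k)}$($u_n$) to the explicit inequality (\ref{dkmm}) by computing in closed form the probability appearing in the definition of D$^{(k)}$($u_n$); this is feasible because an MM process driven by unit Fr\'echet innovations has explicit finite-dimensional distributions. I work with the normalised levels $u_n=n/\tau$: since $\sum_{l,j}\alpha_{l,j}=1$ one has $F(u_n)=P(X_1\leq u_n)=\exp(-1/u_n)$, so $n(1-F(u_n))\to\tau$; moreover the MM process satisfies D($u_n$) (automatically when it is finite, and under the summability $\sum_{l,j}l\alpha_{l,j}<\infty$ in general), so only the local part of D$^{(k)}$($u_n$) is at issue. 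Reindexing the innovation that enters $X_i$ with weight $\alpha_{l,j}$ by $m=i-j$ and using independence of the $Y_{l,m}$'s, one obtains, for every finite index set,
\[
P\Big(\bigcap_{a=1}^{p}\{X_{i_a}\leq u_n\}\Big)=\exp\Big(-\frac{1}{u_n}\sum_{l,m}\max_{1\leq a\leq p}\alpha_{l,i_a-m}\Big),
\]
the inner double sum being finite (it is bounded by $p$).

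\textbf{Reduction to closed form.}
I would write $\{X_1>u_n,M_{1,k}\leq u_n<M_{k,r_n}\}=\{X_1>u_n,M_{1,k}\leq u_n\}\setminus\{X_1>u_n,M_{1,r_n}\leq u_n\}$, the second event being contained in the first, and use $\{X_1>u_n,M_{1,q}\leq u_n\}=\{M_{1,q}\leq u_n\}\setminus\{M_{0,q}\leq u_n\}$. With $u_n=n/\tau$ this gives, for any window length $q$,
\[
nP(X_1>u_n,M_{1,q}\leq u_n)=n\big(e^{-\frac{\tau}{n}A_q}-e^{-\frac{\tau}{n}B_q}\big),
\]
where $A_q=\sum_{l,m}\max_{2\leq a\leq q}\alpha_{l,a-m}$ and $B_q=\sum_{l,m}\max_{1\leq a\leq q}\alpha_{l,a-m}$. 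After the substitution $j=1-m$,
\[
B_q-A_q=\sum_{l}\sum_{j}\Big(\alpha_{l,j}-\bigvee_{r=1}^{q-1}\alpha_{l,j+r}\Big)^{+},
\]
where $x^{+}=\max(x,0)$; this quantity is nonnegative and bounded by $\sum_{l,j}\alpha_{l,j}=1$.

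\textbf{Limits and the algebraic identity.}
For fixed $k$, $nP(X_1>u_n,M_{1,k}\leq u_n)\to\tau(B_k-A_k)$. For $q=r_n=[n/k_n]$ condition (\ref{kn}) gives $r_n\to\infty$ and $r_n/n\to0$, hence $\frac{\tau}{n}A_{r_n}\leq\frac{\tau r_n}{n}\to0$ while $B_{r_n}-A_{r_n}\in[0,1]$; so $nP(X_1>u_n,M_{1,r_n}\leq u_n)\to\tau D_\infty$, where by dominated convergence (each term $\leq\alpha_{l,j}$) $D_\infty=\sum_l\sum_j\big(\alpha_{l,j}-\bigvee_{r\geq1}\alpha_{l,j+r}\big)^{+}$. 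Therefore
\[
\lim_{n\to\infty} nP(X_1>u_n,M_{1,k}\leq u_n<M_{k,r_n})=\tau\big[(B_k-A_k)-D_\infty\big].
\]
Splitting $\bigvee_{r\geq1}\alpha_{l,j+r}=\big(\bigvee_{r=1}^{k-1}\alpha_{l,j+r}\big)\vee\big(\bigvee_{r\geq k}\alpha_{l,j+r}\big)$ and applying termwise the elementary identity $(a-b)^{+}-(a-b\vee c)^{+}=(a\wedge c-b)^{+}$ (valid for $a,b,c\geq0$), one gets
\[
(B_k-A_k)-D_\infty=\sum_l\sum_j\Big(\alpha_{l,j}\wedge\bigvee_{s\geq k+1}\alpha_{l,j+s-1}-\bigvee_{s=2}^{k}\alpha_{l,j+s-1}\Big)^{+}.
\]
Every summand is $\geq0$, so this is $0$ iff each summand vanishes, i.e.\ iff (\ref{dkmm}) holds for all $l\geq1$ and all $j$; this is (a). For (b), taking $k=1$ the event is $\{X_1>u_n,\,\bigvee_{s=2}^{r_n}X_s>u_n\}$ and $nP(X_1>u_n,\bigvee_{s=2}^{r_n}X_s>u_n)=nP(X_1>u_n)-nP(X_1>u_n,M_{1,r_n}\leq u_n)\to\tau(1-D_\infty)=\tau\sum_l\sum_j\big(\alpha_{l,j}\wedge\bigvee_{r\geq1}\alpha_{l,j+r}\big)$, using $a-(a-c)^{+}=a\wedge c$ and $\sum_{l,j}\alpha_{l,j}=1$; this vanishes iff for every $l$ and $j$ either $\alpha_{l,j}=0$ or $\alpha_{l,j+r}=0$ for all $r\geq1$, i.e.\ iff each $l$ has (at most) one index $j$ with $\alpha_{l,j}>0$. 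Equivalently, (b) is the $k=1$ instance of (a) with the convention that an empty join equals $0$.

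\textbf{Main obstacle.}
The only delicate point is the limit of the term carried by the growing window $r_n$ for an infinite MM: one needs $r_n/n\to0$ from (\ref{kn}) to force $e^{-\frac{\tau}{n}A_{r_n}}\to1$ although $A_{r_n}$ itself may diverge, and then domination (each term $\leq\alpha_{l,j}$, total $\leq1$) to pass the infinite sum over $(l,j)$ through the limit. For a finite MM all of this is trivial and the argument reduces to the closed-form computation together with the identity $(a-b)^{+}-(a-b\vee c)^{+}=(a\wedge c-b)^{+}$.
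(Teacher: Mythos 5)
Your proof is correct, and it reaches the conclusion by a genuinely different route than the paper. The paper expands $P(X_1>u_n,\,M_{1,k}\leq u_n<M_{k,r_n})$ as a difference of two joint \emph{exceedance} probabilities and then argues that, after multiplying by $n$, only the contributions in which a single innovation $Y_{l,1-j}$ is responsible for all the exceedances survive; each such contribution is a Fr\'echet tail evaluated at $(n/\tau)\big/\big(\alpha_{l,j}\wedge(\cdots)\big)$, and the limit becomes $\tau\sum_{l,j}\big[\big(\alpha_{l,j}\wedge\bigvee_{s\geq k+1}\alpha_{l,j+s-1}\big)-\big(\alpha_{l,j}\wedge\bigvee_{s=2}^{k}\alpha_{l,j+s-1}\wedge\bigvee_{s\geq k+1}\alpha_{l,j+s-1}\big)\big]$. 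You instead decompose the event into differences of events of the form $\{X_1>u_n,\,M_{1,q}\leq u_n\}$ and exploit the \emph{exact} product form of the joint distribution function of an MM process, so that everything reduces to $n\big(e^{-\tau A_q/n}-e^{-\tau B_q/n}\big)$ together with the elementary identity $(a-b)^{+}-(a-b\vee c)^{+}=(a\wedge c-b)^{+}$; your final summand $\big(\alpha_{l,j}\wedge\bigvee_{s\geq k+1}\alpha_{l,j+s-1}-\bigvee_{s=2}^{k}\alpha_{l,j+s-1}\big)^{+}$ coincides with the paper's via $(a\wedge c)-(a\wedge b\wedge c)=\big((a\wedge c)-b\big)^{+}$, so the two characterizations agree. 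What your route buys is rigor for the infinite MM case: the only asymptotic steps are a Taylor expansion of the exponentials (using $r_n/n\to 0$ and $0\leq B_{r_n}-A_{r_n}\leq 1$) and one monotone/dominated-convergence interchange, whereas the paper's passage from the joint exceedance probability to the single-innovation sum (negligibility of cross terms where distinct innovations cause the distinct exceedances) is asserted rather than justified. What the paper's route buys is a more transparent probabilistic reading of the limit as the expected number of innovations generating the forbidden exceedance pattern, which is what makes the resulting coefficient condition interpretable. Your treatment of (b) as the $k=1$ instance of (a), using $a-(a-c)^{+}=a\wedge c$, is also fine and matches the paper's separate computation.
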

\begin{proof}
\begin{itemize}
\item[(a)] Observe that
\begin{eqnarray}\label{pmmdk1}
\begin{array}{rl}
&\dst nP\left(X_1>u_n,\bigvee_{s=2}^{k}X_s\leq u_n,\bigvee_{s=k+1}^{r_n}X_s>u_n\right)\vspace{0.35cm}\\
=&\dst nP\left(X_1>u_n,\bigvee_{s=k+1}^{r_n}X_s>u_n\right)-nP\left(X_1>u_n,
\bigvee_{s=2}^{k}X_s>u_n,
\bigvee_{s=k+1}^{r_n}X_s>u_n\right).
\end{array}
\end{eqnarray}
Since $X_s=\bigvee_l\bigvee_j\alpha_{l,j}Y_{l,1-(j-s+1)}$ and $\{Y_{l,j},\,l\geq 1,\,-\infty<j<\infty\}$ is a sequence of independent r.v.'s, we have
\begin{eqnarray}\nn
\begin{array}{rl}
&\dst\lim_{n\to\infty}nP\left(X_1>u_n,\bigvee_{s=k+1}^{r_n}X_s>u_n\right)\vspace{0.35cm}\\
=&\dst\lim_{n\to\infty}n\sum_l\sum_jP\left(\alpha_{l,j}Y_{l,1-j}>u_n,\bigvee_{s=k+1}^{r_n}\alpha_{l,j+s-1}Y_{l,1-j}>u_n\right).
\end{array}
\end{eqnarray}
Thus, for levels $u_n=n/\tau$, $\tau>0$,
\begin{eqnarray}\nn
\begin{array}{rl}
&\dst\lim_{n\to\infty}nP\left(X_1>n/\tau,\bigvee_{s=k+1}^{r_n}X_s>n/\tau\right)\vspace{0.35cm}\\
=&\dst\lim_{n\to\infty}n\sum_l\sum_jP\left(Y_{l,1-j}>\frac{n/\tau}{\alpha_{l,j}}
\vee\frac{n/\tau}{\bigvee_{s=k+1}^{r_n}\alpha_{l,j+s-1}}\right)\vspace{0.35cm}\\
=&\dst\lim_{n\to\infty}n\sum_l\sum_jP\left(Y_{l,1-j}>\frac{n/\tau}{\alpha_{l,j}\wedge\bigvee_{s=k+1}^{r_n}\alpha_{l,j+s-1}}
\right).
\end{array}
\end{eqnarray}
Using the same reasoning on the second term in (\ref{pmmdk1}) and by the theorem of dominated convergence,
\begin{eqnarray}\nn
\begin{array}{rl}
&\dst \lim_{n\to\infty}nP\left(X_1>u_n,\bigvee_{s=2}^{k}X_s\leq u_n,\bigvee_{s=k+1}^{r_n}X_s>u_n\right)\vspace{0.35cm}\\
=&\dst \sum_l\sum_j\lim_{n\to\infty}n\left(F_Y\left(\frac{n/\tau}{\alpha_{l,j}\wedge\bigvee_{s=2}^{k}\alpha_{l,j+s-1}
\wedge\bigvee_{s=k+1}^{r_n}\alpha_{l,j+s-1}}\right)\right.\vspace{0.35cm}\\
& \dst\left.
-F_Y\left(\frac{n/\tau}{\alpha_{l,j}\wedge\bigvee_{s=k+1}^{r_n}\alpha_{l,j+s-1}}\right)\right)\vspace{0.35cm}\\
=&\dst \sum_l\sum_j\tau \left(\left(\alpha_{l,j}\wedge\bigvee_{s\geq k+1}\alpha_{l,j+s-1}\right)-\left(\alpha_{l,j}\wedge\bigvee_{s=2}^{k}\alpha_{l,j+s-1}
\wedge\bigvee_{s\geq k+1}\alpha_{l,j+s-1}\right)\right),
\end{array}
\end{eqnarray}
since $\alpha_{l,j}\to 0$, as $l\to\infty$ and $j\to\infty$.
Now just observe that the limit will be null whenever relation (\ref{dkmm}) holds, for all $l\geq 1$ and $-\infty<j<\infty$.
\item[(b)] The conclusion in (b) follows from
$$
\dst\lim_{n\to\infty}nP\left(X_1>u_n,\bigvee_{s=2}^{r_n}X_s>u_n\right)
=\sum_l\sum_j\tau\left(\alpha_{l,j}\wedge\bigvee_{s\geq 2}\alpha_{l,j+s-1}\right).
$$
The latter sum is null if and only if, for each $l$, there is $j^*$ such that  $\alpha_{l,j^*}>0$ and $\alpha_{l,j}=0$ for $j\not=j^*$.
\end{itemize}
\end{proof}

\begin{ex}\label{ex2}
\emph{Consider the moving maximum processes, $X_n = \bigvee_{j=0}^3\alpha_jY_{n-j}$, $\alpha_0=2/6$, $\alpha_1=1/6$ and $\alpha_2=3/6$, $n\geq 1$, given in Example \ref{ex1}, and $W_n = \bigvee_{j=0}^3\alpha_jY_{n-j}$, $\alpha_0=1/6$, $\alpha_1=3/6$ and $\alpha_2=2/6$, $n\geq 1$, with
sequence $\{Y_n\}_{n\geq -1}$ independent and having standard Fréchet marginal distribution. We will see that \x\esp satisfies  D$^{(3)}$($u_n$) (and not D$^{(2)}$($u_n$)) and that \w\esp satisfies  D$^{(2)}$($u_n$), for levels $u_n=n/\tau$, $\tau>0$, by applying relation (\ref{dkmm}). The calculations are summarized in Table \ref{tabex2}. Observe that D$^{(2)}$($u_n$) doesn't hold for \x\esp since, if $j=0$ then $\alpha_{0}\wedge \left(\bigvee_{s\geq 3}\alpha_{s-1}\right)=2/6\wedge 3/6> 1/6= \alpha_1$.}
\begin{table}
\caption{Verification of conditions D$^{(3)}$($u_n$) and D$^{(2)}$($u_n$) for, respectively, the MM processes \x\esp and \w\esp of Example \ref{ex2}, according to relation in (\ref{dkmm}). \label{tabex2}}
\begin{tabular}{ccc|ccc}
& condition D$^{(3)}$($u_n$)& for \x && condition D$^{(2)}$($u_n$)& for \w\\
\hline\hline
$j$ & $\alpha_{j}\wedge \left(\bigvee_{s\geq 4}\alpha_{j+s-1}\right)$ & $\alpha_{j+1}\vee \alpha_{j+2}$ & $j$ & $\alpha_{j}\wedge \left(\bigvee_{s\geq 3}\alpha_{j+s-1}\right)$ & $\alpha_{j+1}$\\
\hline
$\leq -3$& $0\wedge 3/6$ & $0$ & $\leq -2$& $0\wedge 3/6$ & $0$ \\
$-2,-1$& $0\wedge 3/6$ & $2/6$ & $-1$& $0\wedge 3/6$ & $1/6$\\
$0$& $2/6\wedge 0$ & $3/6$ & $0$& $1/6\wedge 2/6$ & $3/6$\\
$1$& $1/6\wedge 0$ & $3/6$ & $1$& $3/6\wedge 0$ & $2/6$\\
$2$& $3/6\wedge 0$ & $0$ & $2$& $2/6\wedge 0$ & $0$\\
$\geq 3$& $0\wedge 0$ & $0$ & $\geq 3$& $0\wedge 0$ & $0$\\
\end{tabular}
\end{table}
\hfill $\square$\\
\end{ex}

Inference within MM processes has been addressed in literature (see Zhang and Smith \cite{zhang+smith10}, 2010). Therefore, as an alternative to the empirical method of S\"{u}veges (\cite{suv07} 2007), we can check the validity of D$^{(k)}$($u_n$) within these processes by estimating coefficients $\alpha_{l,j}$ and applying (\ref{dkmm}).\\

The MM processes are stationary max-stable processes for which, under D$^{(k)}$($u_n$), we can derive the extremal index from a tail dependence coefficient.
Suppose that the stationary process \x\esp has unit Fréchet marginals $F(x)=\exp(-1/x)$, $x>0$. If D$^{(k)}$($u_n^{(\tau)}$) holds for \x, then
\begin{eqnarray}\label{eitdcm}
\begin{array}{rl}
\theta_X=&\dst\lim_{n\to\infty}P\left(M_{1,k}\leq n/\tau|X_1>n/\tau\right)\vspace{0.35cm}\\
=&1-\dst\lim_{n\to\infty}P\left(M_{1,k}> n/\tau|X_1>n/\tau\right)\vspace{0.35cm}\\
=&1-\Lambda_U^{(I_1|I_2)}(1,1),
\end{array}
\end{eqnarray}
provided the limit exists, where $I_1=\{2,\hdots,k\}$, $I_2=\{1\}$ and $\Lambda_U^{(I_1|I_2)}(1,1)$ is the upper tail dependence coefficient considered in Ferreira and Ferreira (\cite{fer+fer12b}, 2012b). In the case of max-stable processes or, more generally, processes satisfying the max-domain of attraction condition, the limit in (\ref{eitdcm}) is always defined. By applying the propositions 2.1 and 3.1 in Ferreira and Ferreira (\cite{fer+fer12b}, 2012b), we conclude that
\begin{eqnarray}\label{eiesp}
\theta_X=\frac{E\left(e^{-M_k^{-1}}\right)}{1-E\left(e^{-M_k^{-1}}\right)}
-\frac{E\left(e^{-M_{1,k}^{-1}}\right)}{1-E\left(e^{-M_{1,k}^{-1}}\right)}
\end{eqnarray}
and, in particular for $k=2$, it holds that
\begin{eqnarray}\label{eiespk2}
\theta_X=\frac{1}{1-E\left(e^{-(X_1\vee X_2)^{-1}}\right)}
-2
\end{eqnarray}
This representation for $\theta_X$ motivates its estimation from moment estimators for $E\left(e^{-M_k^{-1}}\right)=E\left(\bigvee_{i=1}^kF(X_i)\right)$, as considered in Ferreira and Ferreira (\cite{fer+fer12b}, 2012b).

We apply now the results of the previous section in order to compute $\theta_X$ from $\theta_Z$ of the process $\{Z_n=\bigvee_{i=(n-1)(k-1)+1}^{n(k-1)} X_i\}_{n\geq 1}$ under D$^{(2)}$($u_n$). The estimation of $\theta_Z$ is considerably more simpler as suggested by (\ref{eiespk2}).

\begin{pro}\label{pmsei}
Let \x\esp be a stationary max-stable process with unit Fréchet marginals $F$ and $u_n^{(\tau)}=n/\tau$, $\tau>0$. Then
\begin{itemize}
\item[(a)] \z\esp is stationary and max-stable with marginal distribution $F_Z(x)=F^{\epsilon_{k-1}}(x)$, where $\epsilon_{k-1}=-\log F_{(X_1,\hdots,X_{k-1})}(1,\hdots,1)\in [1,k-1]$ is the $(k-1)$-th extremal coefficient of \x.
\item[(b)] If \x\esp satisfies D($u_n$) and D$^{(k)}$($u_n$), $k> 2$, then \z\esp satisfies D($u_n$) and D$^{(2)}$($u_n$),
\begin{eqnarray}\label{eimsz}
\theta_Z=\frac{1}{1-E\left(F_{Z}(Z_1)\vee F_{Z}(Z_2)\right)}-2
\end{eqnarray}
and
\begin{eqnarray}\label{eimsx}
\theta_X=\theta_Z\frac{-\log F_Z(1)}{k-1}.
\end{eqnarray}
\end{itemize}
\end{pro}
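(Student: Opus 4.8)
The plan is to derive everything from the block construction $Z_n=\bigvee_{i=(n-1)(k-1)+1}^{n(k-1)}X_i$, which is precisely (\ref{z}) for the deterministic renewal $I_n=n(k-1)$ (so $t=s=k-1$ and $p=E(S_n)=k-1$), combined with the structural results of Section \ref{sresult} and the max-stable identity (\ref{eiespk2}).

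For part (a): stationarity of $\{Z_n\}$ is immediate, because the blocks $\{(n-1)(k-1)+1,\dots,n(k-1)\}$, $n\geq 1$, tile $\{1,2,\dots\}$ and $Z_n$ is a fixed-width window maximum shifted with $n$, so the finite-dimensional laws of $\{Z_n\}$ are translation invariant whenever those of $\{X_n\}$ are. Max-stability passes to $\{Z_n\}$ since each finite-dimensional law of $\{Z_n\}$ is obtained from one of $\{X_n\}$ by taking componentwise maxima over disjoint finite index blocks, and the class of max-stable laws is closed under this operation (block maxima commute with componentwise maxima of i.i.d.\ copies, so the defining rescaling with $a_n=n$ is inherited). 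For the marginal, $F_Z(x)=P(X_1\le x,\dots,X_{k-1}\le x)=F_{(X_1,\dots,X_{k-1})}(x,\dots,x)$, and max-stability with unit Fr\'echet margins forces this to equal $\exp(-\epsilon_{k-1}/x)=F^{\epsilon_{k-1}}(x)$ with $\epsilon_{k-1}=-\log F_{(X_1,\dots,X_{k-1})}(1,\dots,1)$; the bounds $1\le\epsilon_{k-1}\le k-1$ are the usual extremal-coefficient bounds (perfect dependence on the left, independence on the right).

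For part (b): that $\{Z_n\}$ inherits D($u_n$) follows from Proposition \ref{pcondD} specialised to the deterministic renewal (here with $l_n^*=[2l_n/(k-1)]$), and that it inherits D$^{(2)}$($u_n$) follows from Proposition \ref{peidep2.2}(b) --- equivalently from the corollary stating that D$^{(2)}$($u_n$) for $\{Z_n\}$ is equivalent to D$^{(k)}$($u_n$) for $\{X_n\}$ --- since with $t=s=k-1$ one has $2t>s$ and $2t-s+1=k$, so $\{X_n\}$ satisfying D$^{(k)}$($u_n$) meets the hypothesis $k\le 2t-s+1$. To obtain (\ref{eimsz}), rescale to unit Fr\'echet margins by setting $\widetilde Z_n:=Z_n/\epsilon_{k-1}$; then $\{\widetilde Z_n\}$ is again stationary and max-stable, satisfies D($u_n$) and D$^{(2)}$($u_n$) with the correspondingly rescaled levels, and has the same extremal index $\theta_Z$ as $\{Z_n\}$. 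Applying (\ref{eiespk2}) to $\{\widetilde Z_n\}$ and using $e^{-(\widetilde Z_1\vee\widetilde Z_2)^{-1}}=\exp(-\epsilon_{k-1}/(Z_1\vee Z_2))=F_Z(Z_1\vee Z_2)=F_Z(Z_1)\vee F_Z(Z_2)$ gives (\ref{eimsz}).

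To finish, $u_n=n/\tau$ is a normalising sequence $u_n^{(\tau^*)}$ for $\{Z_n\}$ with $\tau^*=\lim_n n\big(1-\exp(-\epsilon_{k-1}\tau/n)\big)=\epsilon_{k-1}\tau$, hence $\tau^*/\tau=\epsilon_{k-1}=-\log F_Z(1)$; since $\{Z_n\}$ has an extremal index the limit $\nu^*=\lim_n nP(M_{I_1}\le u_n<M_{I_1,I_2})$ exists, and the corollary following Proposition \ref{peidep2.2} (together with (\ref{eidir})) applies with $p=k-1$, giving $\theta_X=\theta_Z\tau^*/((k-1)\tau)=\theta_Z\,\epsilon_{k-1}/(k-1)=\theta_Z(-\log F_Z(1))/(k-1)$, which is (\ref{eimsx}). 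I expect the only real friction to be the bookkeeping around the rescaling step: one must verify that D($u_n$), D$^{(2)}$($u_n$) and the numerical value of the extremal index are genuinely unchanged when $Z_n$ is replaced by $Z_n/\epsilon_{k-1}$ with the thresholds scaled by $1/\epsilon_{k-1}$, and that $n/\tau$ simultaneously normalises $\{X_n\}$, $\{Z_n\}$ and $\{\widetilde Z_n\}$ (with constants $\tau$, $\epsilon_{k-1}\tau$, $\tau$ respectively), so that (\ref{eiespk2}) applies verbatim to $\{\widetilde Z_n\}$; everything else is a direct appeal to Section \ref{sresult}.
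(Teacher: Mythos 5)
Your proof is correct and follows essentially the same route as the paper: the paper likewise leaves (a) to the reader, and for (b) it introduces exactly your rescaled sequence $Z_n^{*}=Z_n/\epsilon_{k-1}$, applies (\ref{eiespk2}) to it, identifies $e^{-(Z_1\vee Z_2)^{-1}\epsilon_{k-1}}=F_Z(Z_1)\vee F_Z(Z_2)$, and then invokes Proposition \ref{peidep2} with $\tau^{*}=-\log F_Z(1)\,\tau$ to get (\ref{eimsx}). The only difference is that you spell out the inheritance of D($u_n$) and D$^{(2)}$($u_n$) via Propositions \ref{pcondD} and \ref{peidep2.2} and the level-normalisation bookkeeping, which the paper simply asserts.
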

\begin{proof}
We only justify (b), leaving (a) to the reader.

We first consider the sequence of cycles $\{Z_n^*=\bigvee_{i=(n-1)(k-1)+1}^{n(k-1)} X_i/\epsilon_{k-1}\}_{n\geq 1}$ which satisfies the same local and long-range dependence conditions as \z. For this stationary and max-stable sequence with unit Fréchet marginals, by applying (\ref{eiespk2}), we obtain
$$
\theta_{Z^*}=\frac{1}{1-E\left(e^{-(Z_1^*\vee Z_2^*)^{-1}}\right)}
-2.
$$
Then
$$
\theta_{Z}=\theta_{Z^*}=\frac{1}{1-E\left(e^{-(Z_1\vee Z_2)^{-1}\epsilon_{k-1}}\right)}
-2=\frac{1}{1-E\left(F_{Z}(Z_1)\vee F_{Z}(Z_2)\right)}-2.
$$
To obtain the relation (\ref{eimsx}) we apply Proposition \ref{peidep2} with
$$
\tau^*=\lim_{n\to\infty}nP(Z>n/\tau)=-\log F_Z(1)\tau.
$$
\end{proof}
This result suggests the estimation of $\theta_X$ via the estimation of $-\log F_Z(1)$ and $E\left(F_{Z}(Z_1)\vee F_{Z}(Z_2)\right)$.

\section{Estimation}\label{sestim}

Our new estimation proposal consists in first, to state the sequence of cycles, $Z_n=\bigvee_{s=(n-1)(k-1)+1}^{n(k-1)} X_s$, $n\geq 1$, and then  estimate $\theta$ based on  \z. Observe that, from Proposition \ref{peidep2}, we can define the estimator
\begin{eqnarray}\label{FDir}
\widehat{\theta}_X=\frac{U^Z_n(u_n)}{N^X_n(u_n)},
\end{eqnarray}
as well as, the estimator
\begin{eqnarray}\label{FInd}
\widehat{\theta}_X=\frac{\widehat{\theta}_Z N^Z_n(u_n)}{N^X_n(u_n)},
\end{eqnarray}
where $U^Z_n(u_n)$ and $N^Z_n(u_n)$ are, respectively, the number of upcrossings of $u_n$ and the number of exceedances of $u_n$  within $\{Z_1,\hdots,Z_{[n/(k-1)]}\}$  and $N^X_n(u_n)$ is the number of exceedances of $u_n$ within $\{X_1,\hdots,X_n\}$. Since D$^{(2)}$($u_n$) holds for \z, estimators under this condition can be used to calculate $\widehat{\theta}_Z$, e.g., the maximum likelihood in S\"{u}veges (\cite{suv07}, 2007) and the upcrossings estimator in Nandagopalan (\cite{nand90}, 1990).

Now observe that, based on (\ref{eitdcm}), we can write $\theta_Z$ as
\begin{eqnarray}\nn
\theta_Z=1-\lim_{n\to\infty}\frac{P(M_{I_{1},I_2}>u_n|M_{I_1}> u_n)}{P(M_{I_1}>u_n)}=1-\lambda_Z,
\end{eqnarray}
where $\lambda_Z$ is the so called ``tail dependence coefficient" (see Joe \cite{joe} 1997 p. 33, Coles \emph{et al.} \cite{coles+99} 1999, Schmidt and Stadm\"{u}ller \cite{schmi+stad06} 2006 and references therein; see also Ferreira and Ferreira \cite{fer+fer12a} 2012a Proposition 4). Hence, we can derive
\begin{eqnarray}\label{eitdc}
\theta_X=\frac{(1-\lambda_Z)\tau^*}{(k-1)\tau}.
\end{eqnarray}
and thus also state the estimator
\begin{eqnarray}\label{FIndtdc}
\widehat{\theta}_X=\frac{(1-\widehat{\lambda_Z})N^Z_n(u_n)}{N^X_n(u_n)},
\end{eqnarray}
We can estimate the tail dependence coefficient by applying a non-parametric procedure, e.g., the one in Schmidt and Stadm\"{u}ller (\cite{schmi+stad06}, 2006).
For the particular case of max-stable processes, by representation (\ref{eimsz}), we can also apply the estimators $\widehat{\theta}_Z$ in Ferreira and Ferreira (\cite{fer+fer12b}, 2012b) for $\theta_Z$ and again $\widehat{\theta}_X$ as in (\ref{FInd}). A similar procedure based on (\ref{eimsx}) leads to a second estimator for max-stable processes, namely
\begin{eqnarray}\nn
\widehat{\theta}_X=\widehat{\theta}_Z\frac{-\log \widehat{F}_Z(1)}{k-1},
\end{eqnarray}
where $\widehat{F}_Z(1)$ is the empirical distribution function. These estimators will be denoted, respectively, $\widehat{\theta}^{SS}$, $\widehat{\theta}^{FF}$ and $\widehat{\theta}^{FF^*}$.\\

In the next section we analyze our new proposal through simulation. For $\widehat{\theta}_Z$ in expression (\ref{FInd}), we consider the upcrossings estimator of Nandagopalan (\cite{nand90}, 1990), the estimator of Ferro and Segers (\cite{ferro+segers03}, 2003) also known as intervals estimator and the maximum likelihood estimator of S\"{u}veges (\cite{suv07}, 2007), and denote our extremal index estimators, respectively,  $\widehat{\theta}^{U}$,  $\widehat{\theta}^{I}$ and $\widehat{\theta}^{ML}$.
We also compare with the intervals and runs estimators applied directly on \x. For these estimators we use notation $\widetilde{\theta}^{I}$ and $\widetilde{\theta}^{R}$, respectively. \\

In order to analyze D$^{(k)}$($u_n$) and construct the cycles \z, we can extend the methodology in S\"{u}veges (\cite{suv07}, 2007) considered to check D$^{(2)}$($u_n$). More precisely, we compute the proportion of anti-D$^{(k)}$($u_n$) events by
$$
\dst p_k(u_n,r_n)=\frac{\sum_{j=1}^{n-r_n+1}\mathds{1}_{\{X_j>u_n,X_{j+1}\leq u_n,\hdots,X_{j+k-1}\leq u_n,M_{j+k-1,r_n+j-1}>u_n\}}}{\sum_{j=1}^{n}\mathds{1}_{\{X_j>u_n\}}},
$$
for normalized levels $u_n$ approximated by the empirical quantiles $1-\tau/n$, for some fixed positive $\tau$ and some sequence $\{r_n\}_{n\geq 1}$ satisfying the conditions of Proposition (\ref{peiindep}). We take the proportions $p_k(u_m,r_m)$ for sequences $\{X_1,\hdots,X_m\}$, with increasing length $m\leq n$.

On what concerns the choice of $\{k_n\}_{n\geq 1}$, we can choose, for instance, the family of sequences of integers $k_n^{(s)}=[(\log n)^{s}]$, $s>0$. Thus, for each $\tau$ and $s$, we can plot the points $(m,p_k(u_m,r_m^{(s)}))$, which must converge to zero, for some $s$, as $m$ increases if D$^{(k)}$($u_n$) holds with $k_n^{(s)}$. This is a slightly different approach of the one in S\"{u}veges (\cite{suv07}, 2007), but closer to the definition of D$^{(k)}$($u_n$), since this condition states a limiting behavior as $n\to\infty$, and $u_n\approx F^{-1}(1-\tau/n)$, $r_n=[n/k_n]$ and $p_k(u_n,r_n)$ are functions of $n$. To avoid three-dimensional plots that arise from the joint variation of $\tau$, $s$ and $m$, we can separately analyze the evolution of the proportions for different choices of $r_n$. For the particular case of D$^{(1)}$($u_n$) condition, we have the proportions
$$
p_1(u_n,r_n)=\frac{\sum_{j=1}^{n-r_n+1}\mathds{1}_{\{X_j>u_n,M_{j,r_n+j-1}>u_n\}}}
{\sum_{j=1}^{n}\mathds{1}_{\{X_j>u_n\}}}.
$$
Once accepted the condition D$^{(k)}$($u_n$) for some $k$, that means we consider that the process satisfies D$^{(s)}$($u_n$) for all $s\geq k$ and does not satisfy D$^{(s)}$($u_n$) for $s<k$. The decision to exclude values less than $k$ may be based on the analysis of $(m,p_{k-1}(u_m,r_m))$ or, from the remark after Proposition \ref{peidep2}, by comparing $d_{k-1}(u_n,r_n)$ with $d_{k}(u_n,r_n)$, where
$$
d_{k}(u_n,r_n)=\sum_{j=1}^{n-r_n+1}\mathds{1}_{\{X_j>u_n,M_{j,j+k-1}\leq u_n\}}.
$$
A good choice of $k$ is enhanced by away trajectories for $(m, d_{k-1}(u_m,r_m))$ and $(m, d_{k}(u_m,r_m))$ and close trajectories for $(m, d_{k}(u_m,r_m))$ and $(m, d_{k+1}(u_m,r_m))$.\\

An illustration is given in Figures \ref{figD3} and \ref{figD4-5}, where it was considered a simulated sample of size $10000$ for each of the models given below. In Figure \ref{figD3} it is plotted the proportions of anti-D$^{(3)}$($u_n$) (first five panels) and anti-D$^{(4)}$($u_n$) (last panel), with $k_n=[(\log n)^3]$ and values $\tau=50,100$, for some models. More precisely, the three first line panels correspond to the proportions of anti-D$^{(3)}$($u_n$) of the following models: a first order autoregressive process with Cauchy marginals and autoregressive parameter $\rho=-0.6$ of Chernick (\cite{chern78}, 1978), a negatively correlated uniform AR(1) process of Chernick \emph{et al.} (\cite{chern+91}, 1991) with $r=2$, respectively denoted ARCauchy and ARUnif, and an MM process with coefficients $\alpha_0=2/6,\,\alpha_1=1/6,\,\alpha_2=3/6$ as given in Example \ref{ex1}. The first second line panel correspond to the proportions of anti-D$^{(3)}$($u_n$)  of a first order MAR process with standard Fréchet marginals and autoregressive parameter $\phi=0.5$ of Davis and Resnick (\cite{dav+res89}, 1989). The last two panels in the second line correspond to,
respectively, the proportions of anti-D$^{(3)}$($u_n$) and anti-D$^{(4)}$($u_n$) of a Markov chain with standard Gumbel marginals and logistic joint distribution with dependence parameter $\alpha=0.5$
The MAR process satisfy D$^{(2)}$($u_n$) and so D$^{(3)}$($u_n$) holds, thus leading to small proportions of anti-D$^{(3)}$($u_n$). The same scenario is noticed in the first three models, all satisfying condition D$^{(3)}$($u_n$). There are slightly upper curves within the Markov chain  but still comprising small values.  A little decrease occurs in the proportions of anti-D$^{(4)}$($u_n$) for the Markov process.

In Figure \ref{figD4-5} we find the proportions of anti-D$^{(3)}$($u_n$) to anti-D$^{(5)}$($u_n$) of a GARCH(1,1) process with Gaussian innovations, autoregressive parameter $\lambda=0.25$ and variance parameter $\beta=0.7$ (Laurini and Tawn, \cite{lau+tawn12} 2012). More precisely, in the first two panels are plotted the proportions of anti-D$^{(3)}$($u_n$) by choosing $k_n=[(\log n)^3]$ and $k_n=[(\log n)^{3.3}]$, respectively, and the last two plots correspond to the proportions of anti-D$^{(4)}$($u_n$) and anti-D$^{(5)}$($u_n$), with  $k_n=[(\log n)^{3.3}]$. We can see that the choice $k_n=[(\log n)^{3.3}]$ may be better within this case. The plots also suggest that condition D$^{(3)}$($u_n$) is unlikely to hold for the considered GARCH(1,1) model. A more prominent decrease is observed within the proportions of anti-D$^{(4)}$($u_n$) and  anti-D$^{(5)}$($u_n$). It will be seen in the simulation study that these proportions lead to a quite acceptable choice of values of $k$ in the knew estimation procedure.

Observe that from Proposition \ref{peidep2.2} we can also analyze D$^{(k)}$($u_n$) by evaluating D$^{(2)}$($u_n$) within cycles \z. The respective plots are in Figures \ref{figD2Z} and \ref{figD2ZGarch} and seem to corroborate the analysis above.

The  plots observation can give us some clue about D$^{(k)}$($u_n$) but does not allow to make a definite decision. We can always opt for higher values of $k$ since, if D$^{(k)}$($u_n$) holds then D$^{(s)}$($u_n$) holds for all $s>k$. However, a too large $k$ for the cycles may diminish the precision of the new estimators, as will be pointed in the next section.

\begin{figure}
\begin{center}
\includegraphics[width=3.9cm,height=3.9cm]{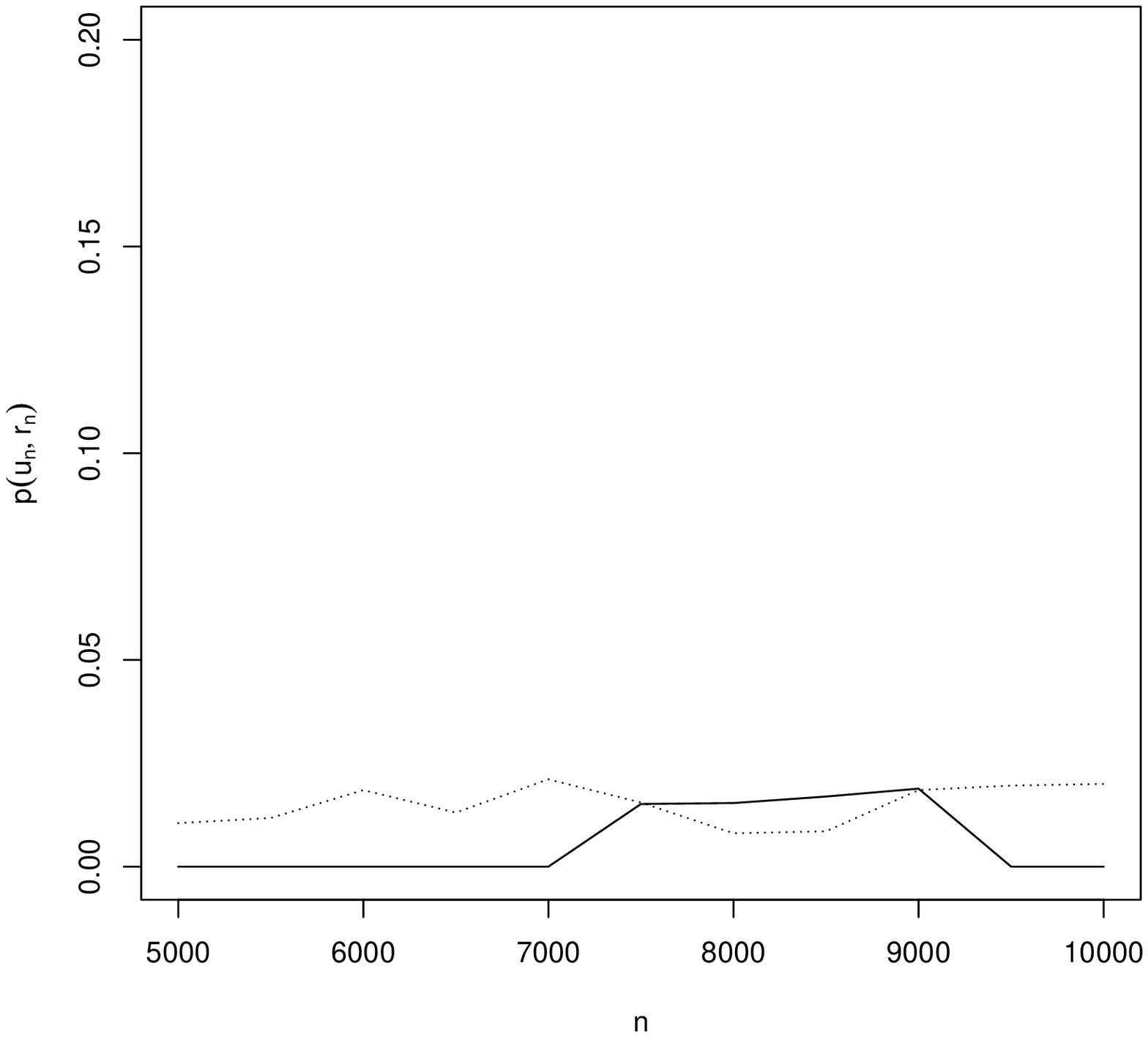}
\includegraphics[width=3.9cm,height=3.9cm]{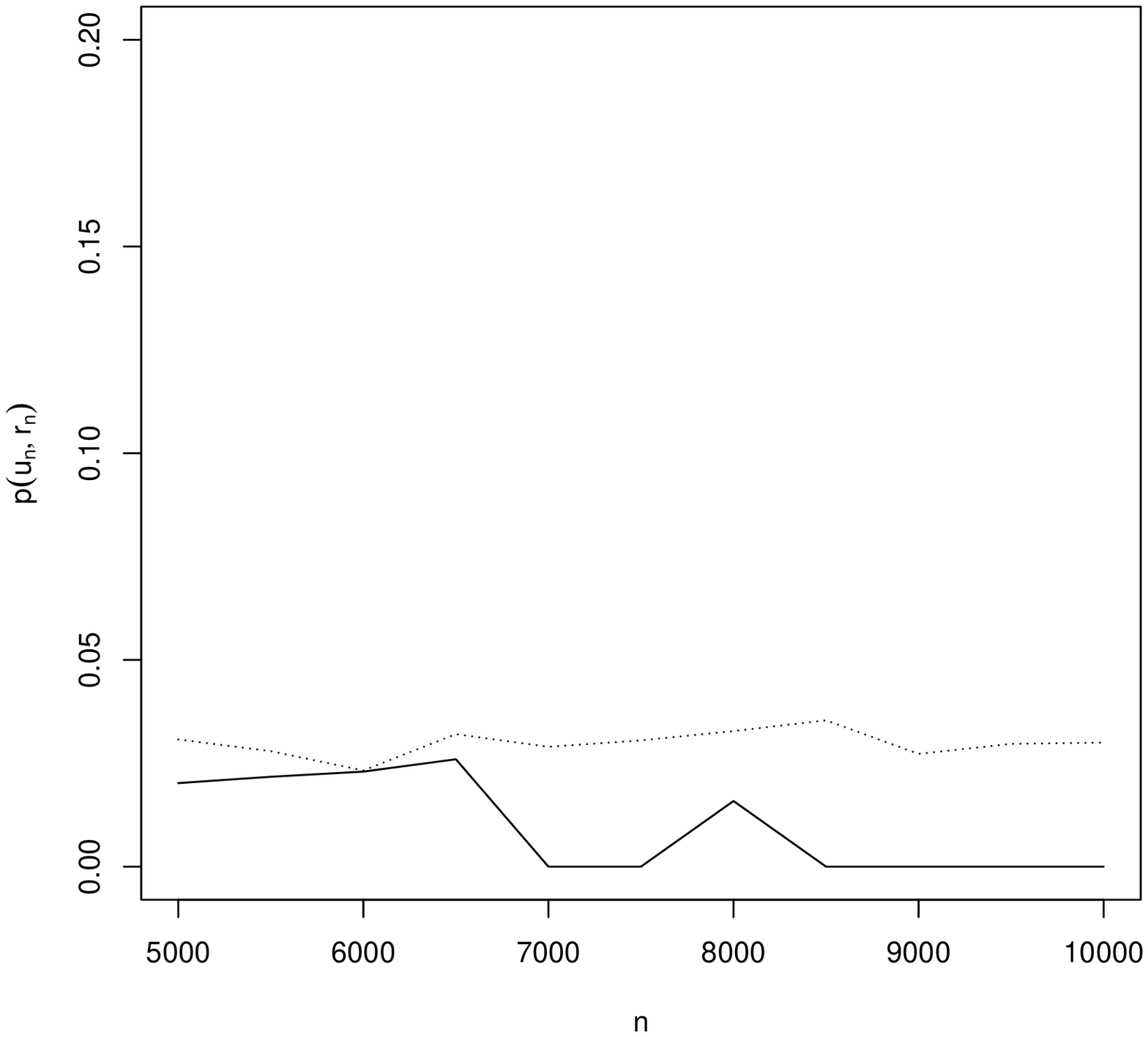}
\includegraphics[width=3.9cm,height=3.9cm]{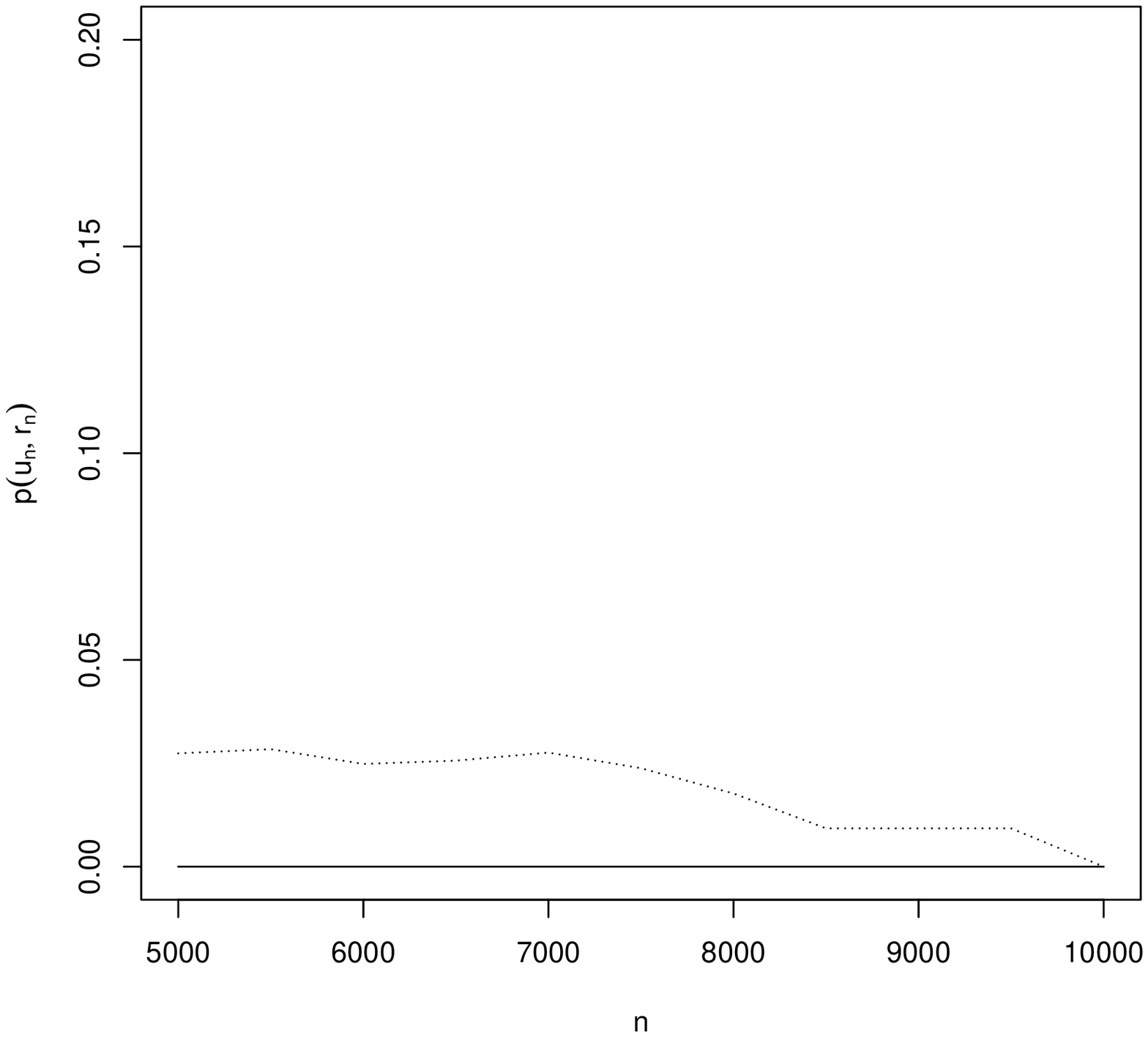}
\includegraphics[width=3.9cm,height=3.9cm]{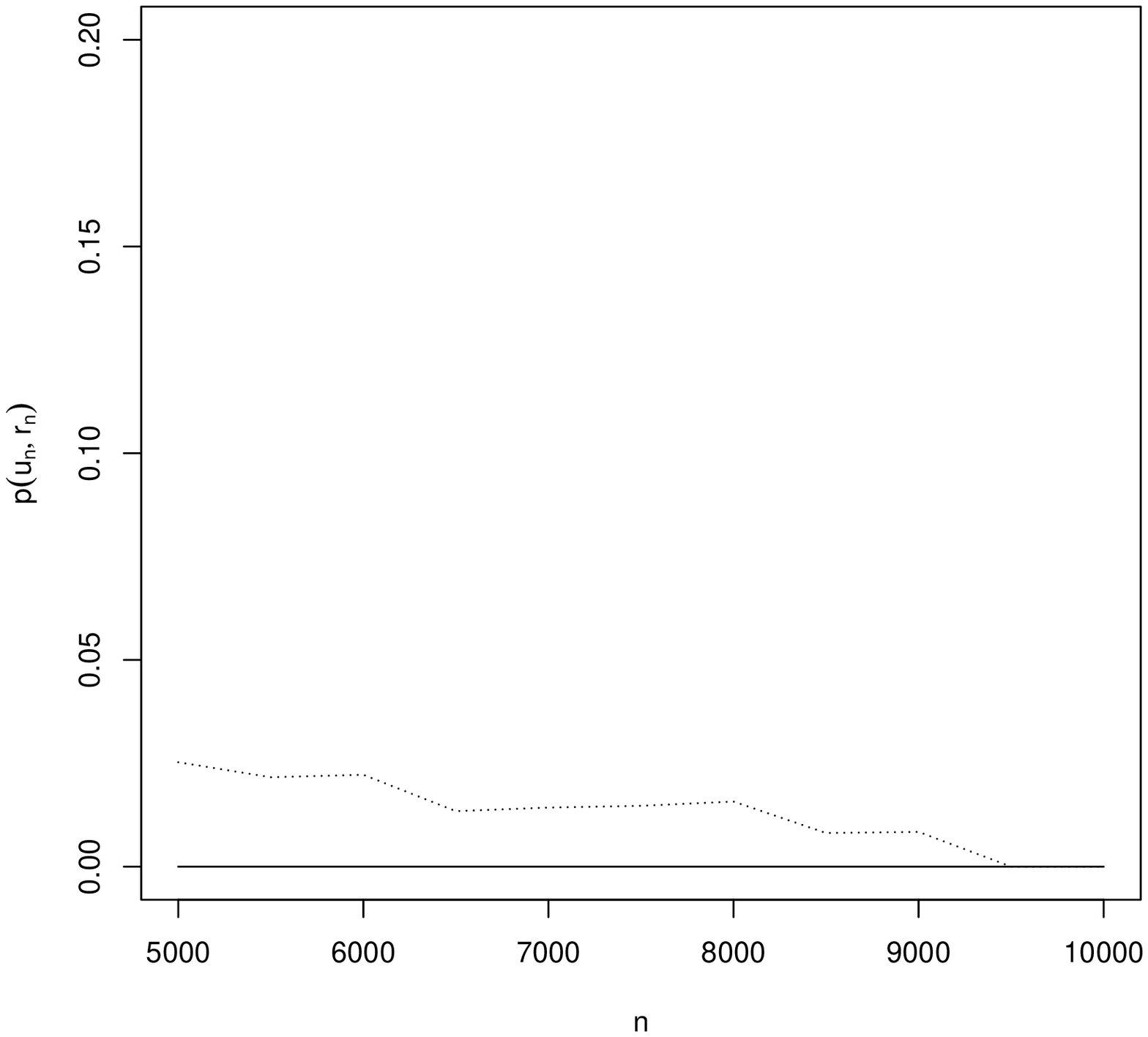}
\includegraphics[width=3.9cm,height=3.9cm]{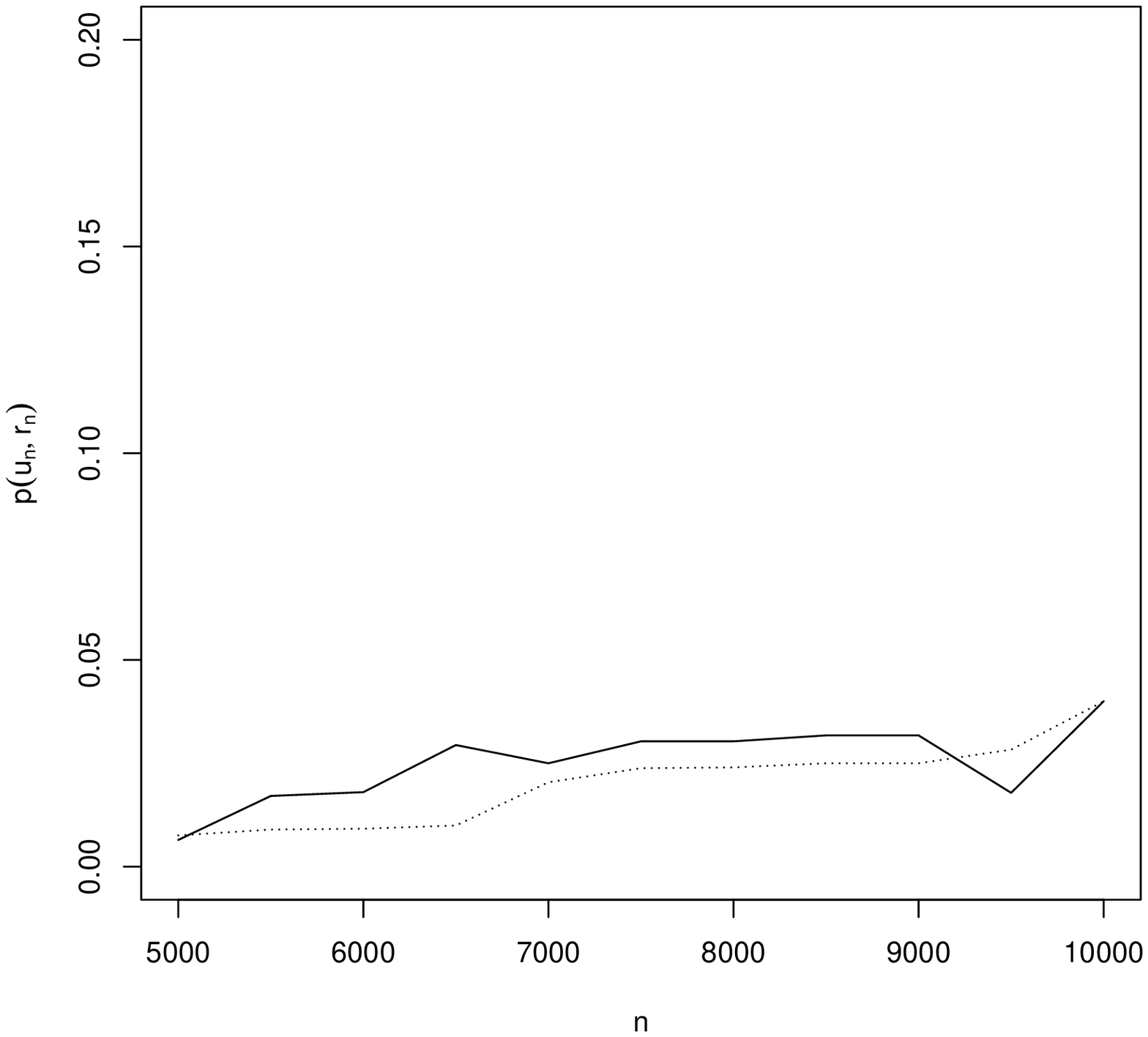}
\includegraphics[width=3.9cm,height=3.9cm]{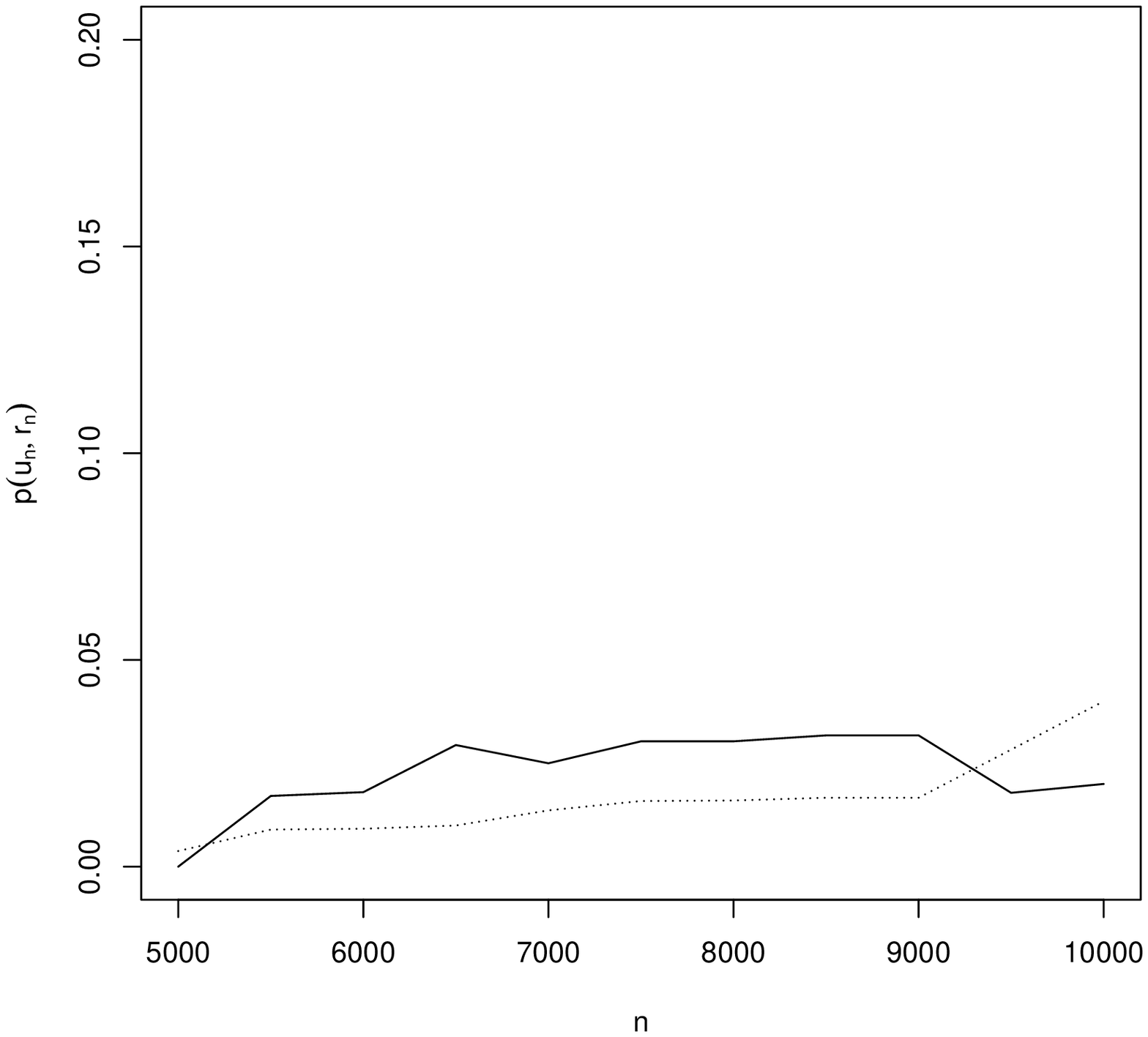}
\caption{From left to right and top to bottom, proportions of anti-D$^{(3)}$($u_n$) for ARCauchy, ARUnif, MM, MAR and Markov chain and anti-D$^{(4)}$($u_n$) of Markov chain, respectively, for $\tau=50$ (full line) and $\tau=100$ (dotted line), with $k_n=[(\log n)^3]$.\label{figD3}}
\end{center}
\end{figure}

\begin{figure}
\begin{center}
\includegraphics[width=3.9cm,height=3.9cm]{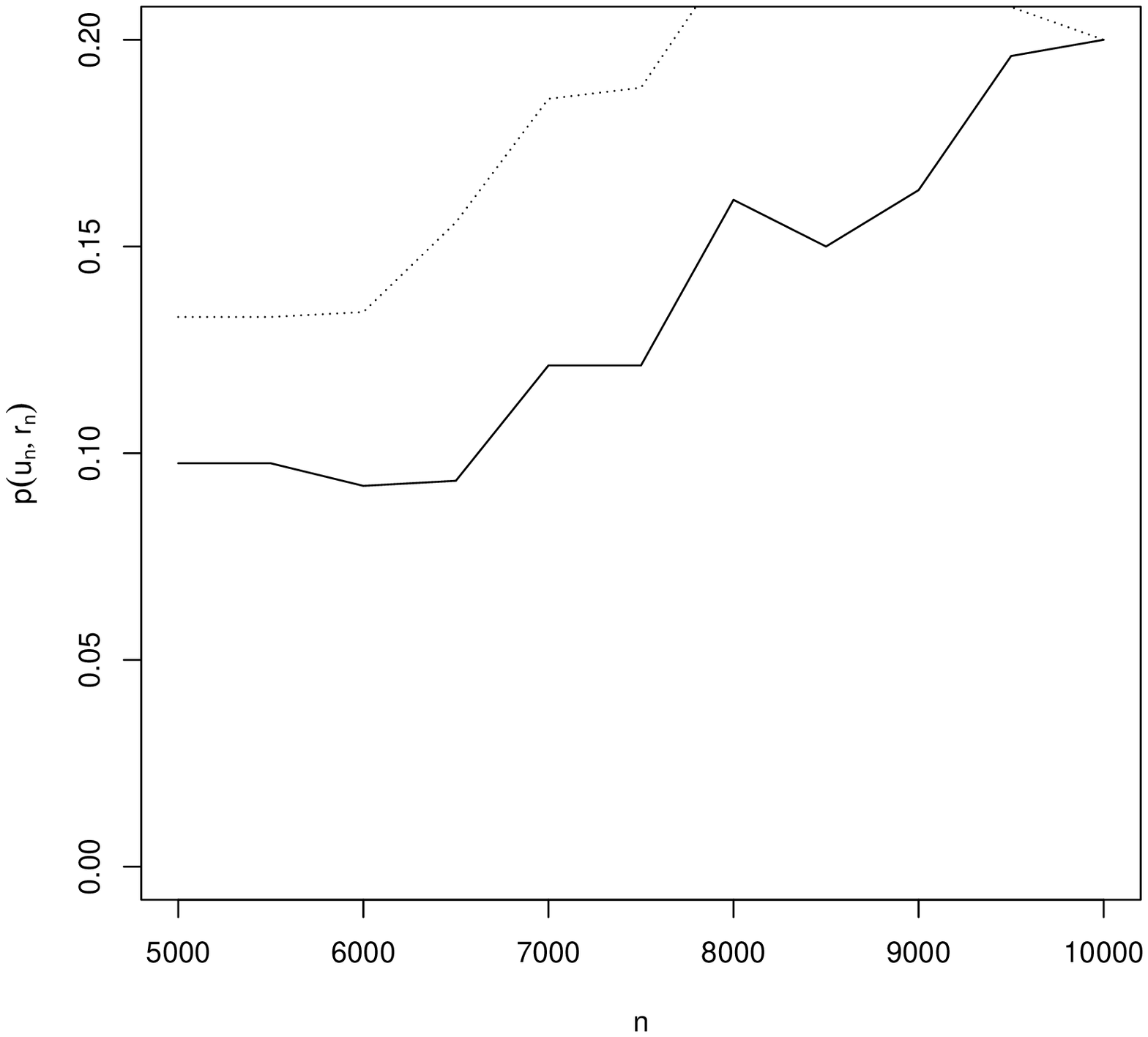}
\includegraphics[width=3.9cm,height=3.9cm]{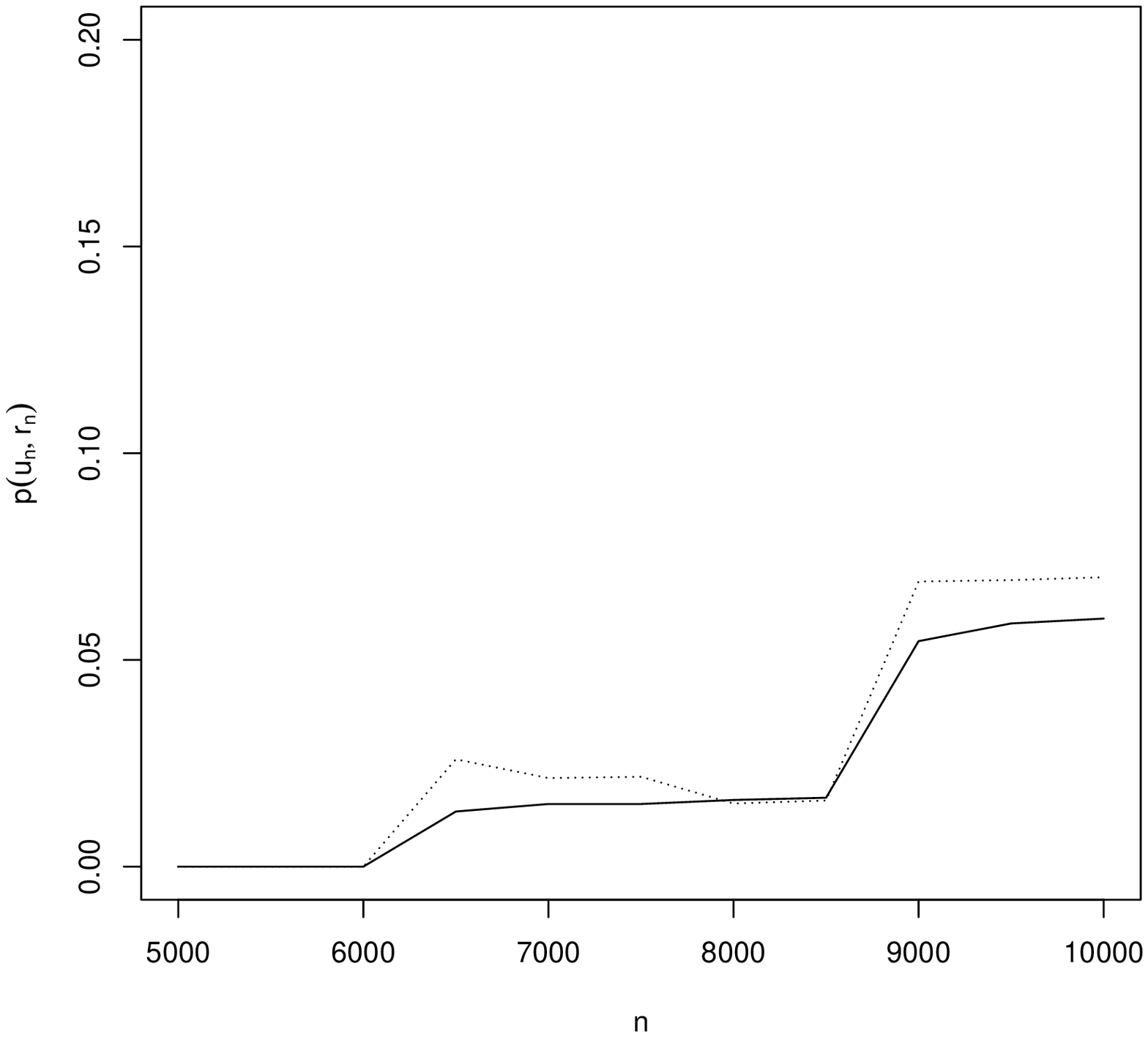}\\
\includegraphics[width=3.9cm,height=3.9cm]{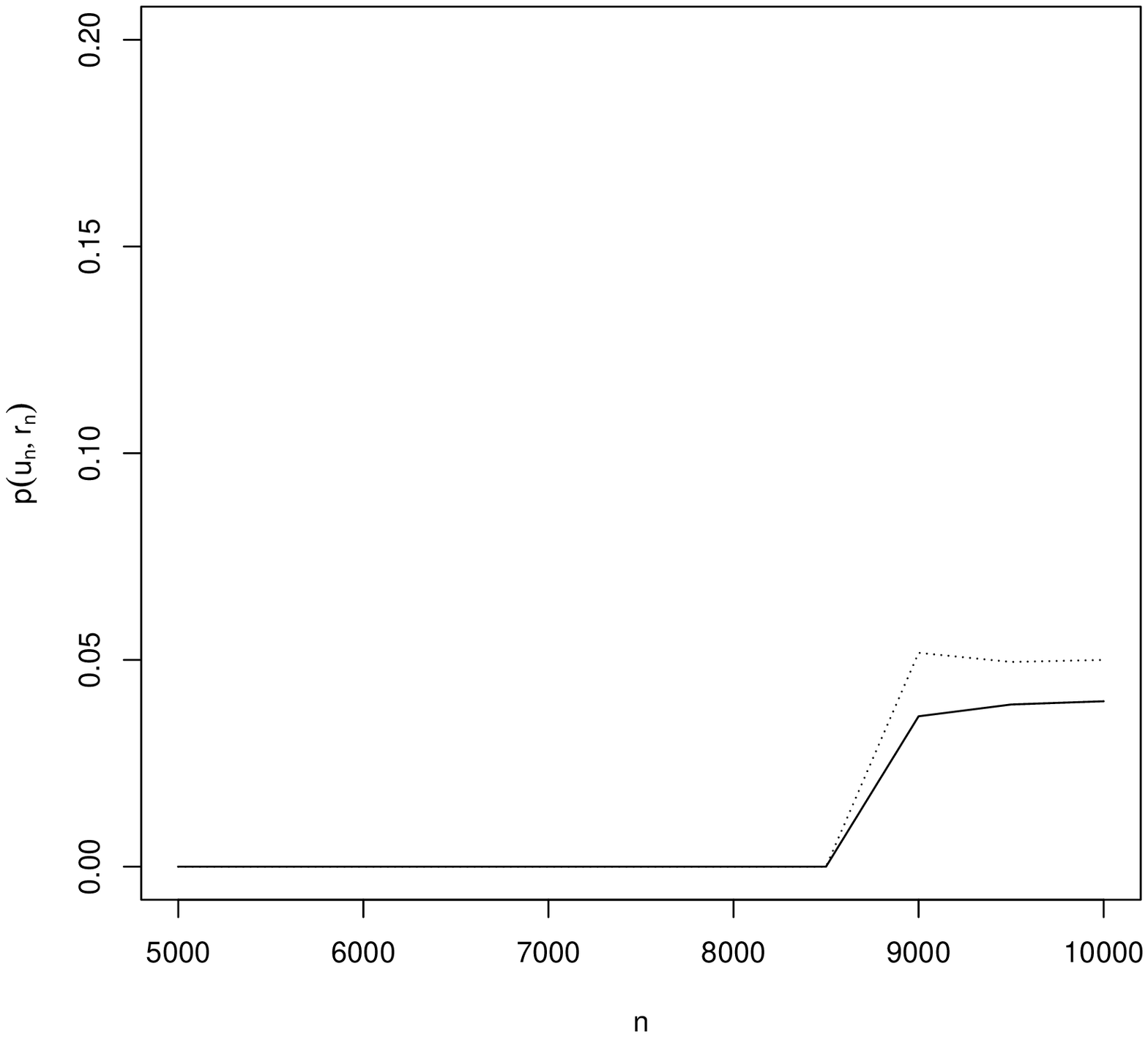}
\includegraphics[width=3.9cm,height=3.9cm]{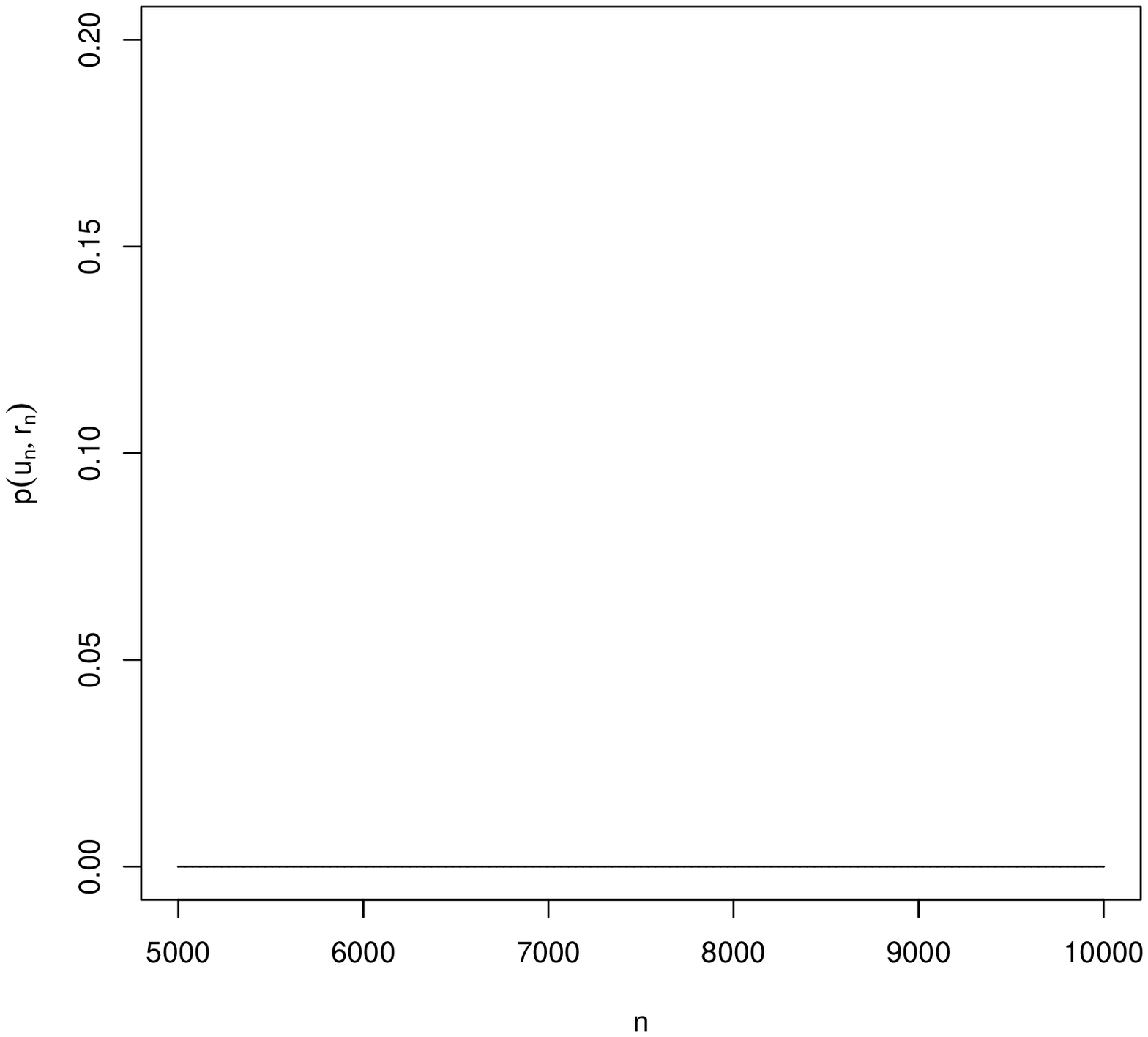}
\caption{From left to right and top to bottom, proportions of anti-D$^{(3)}$($u_n$) with $k_n=[(\log n)^3]$ and anti-D$^{(3)}$($u_n$) to anti-D$^{(5)}$($u_n$) with $k_n=[(\log n)^{3.3}]$ for   GARCH(1,1), for $\tau=50$ (full line) and $\tau=100$ (dotted line).\label{figD4-5}}
\end{center}
\end{figure}

\begin{figure}
\begin{center}
\includegraphics[width=3.9cm,height=3.9cm]{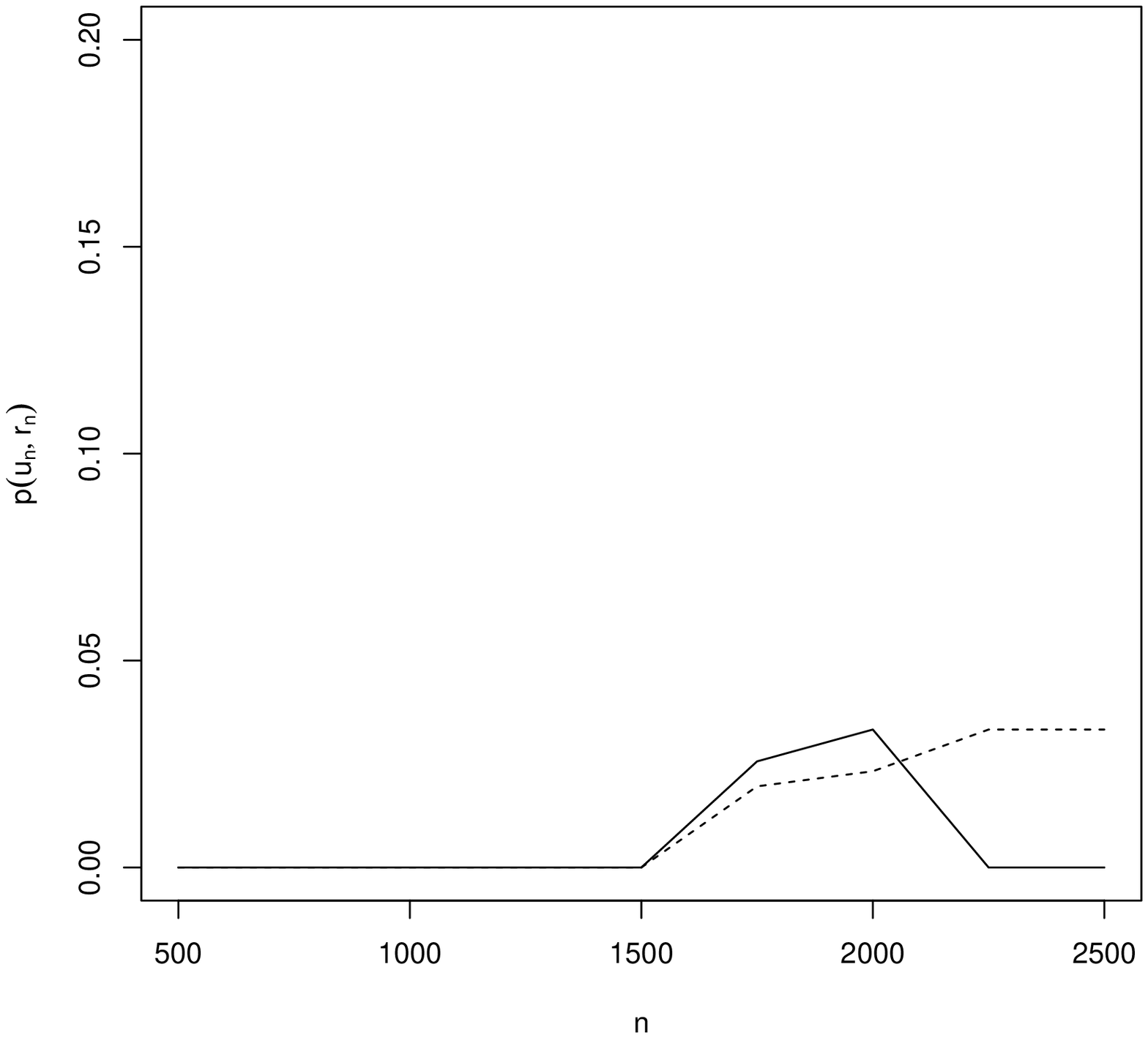}
\includegraphics[width=3.9cm,height=3.9cm]{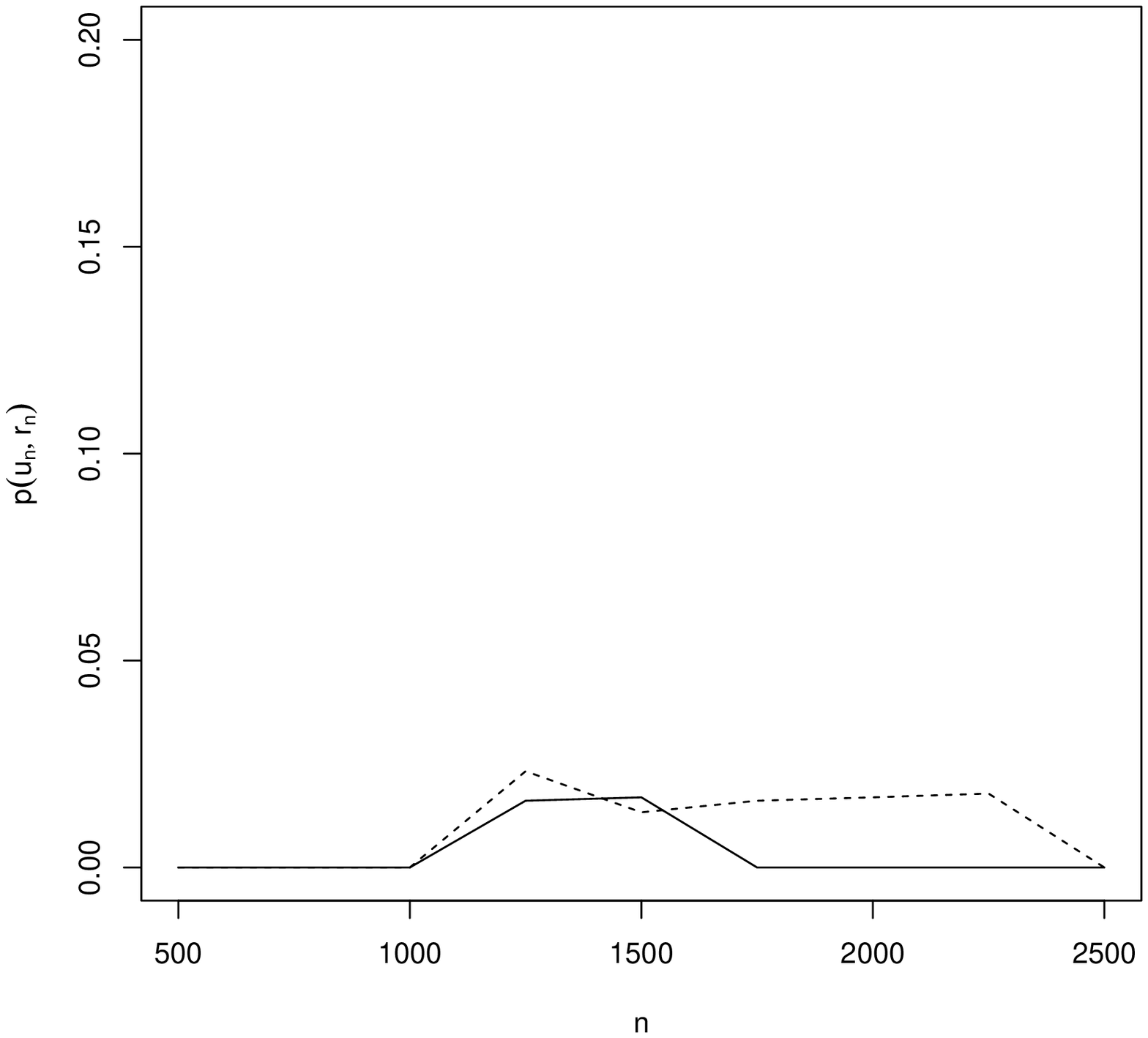}
\includegraphics[width=3.9cm,height=3.9cm]{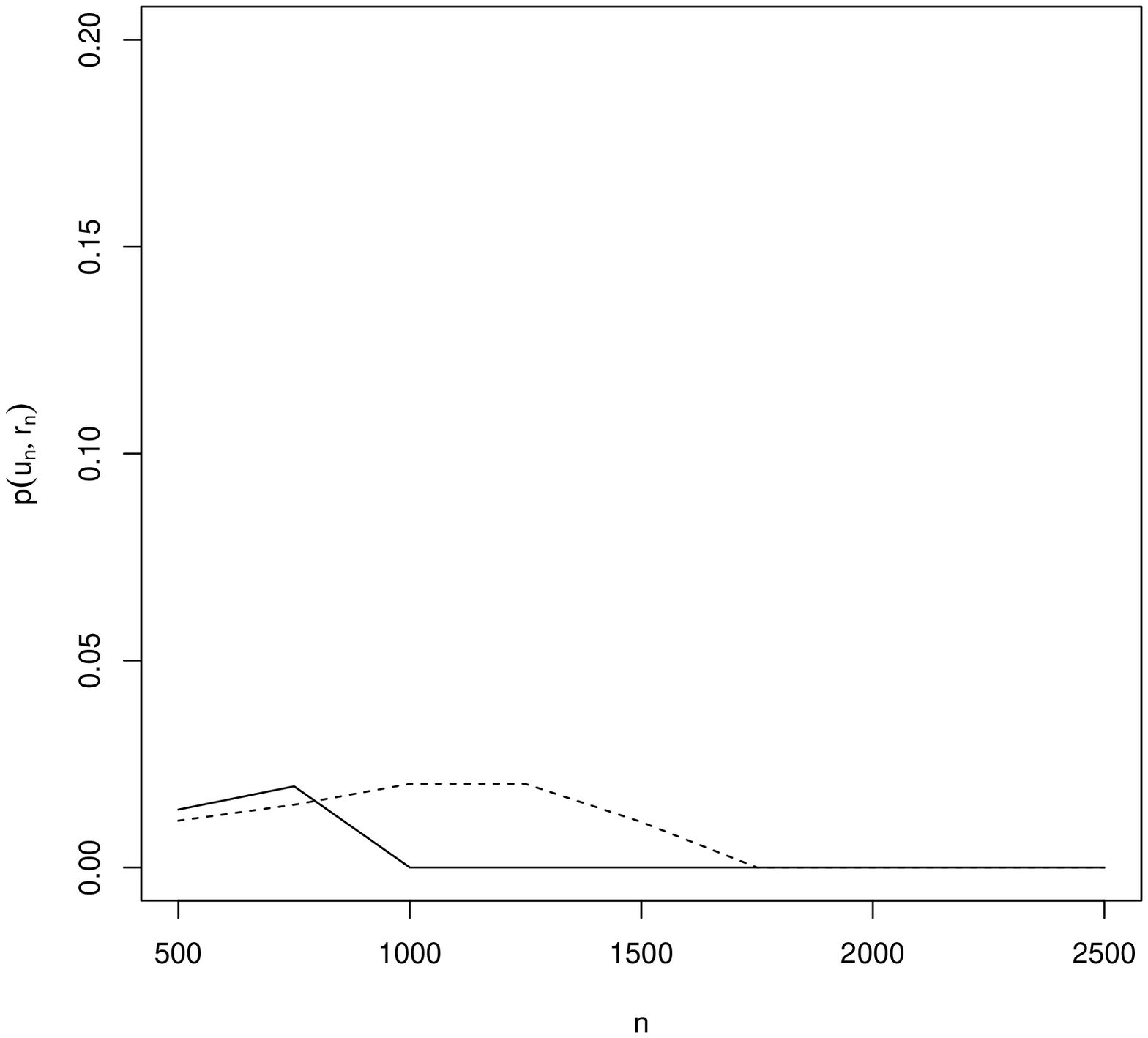}
\includegraphics[width=3.9cm,height=3.9cm]{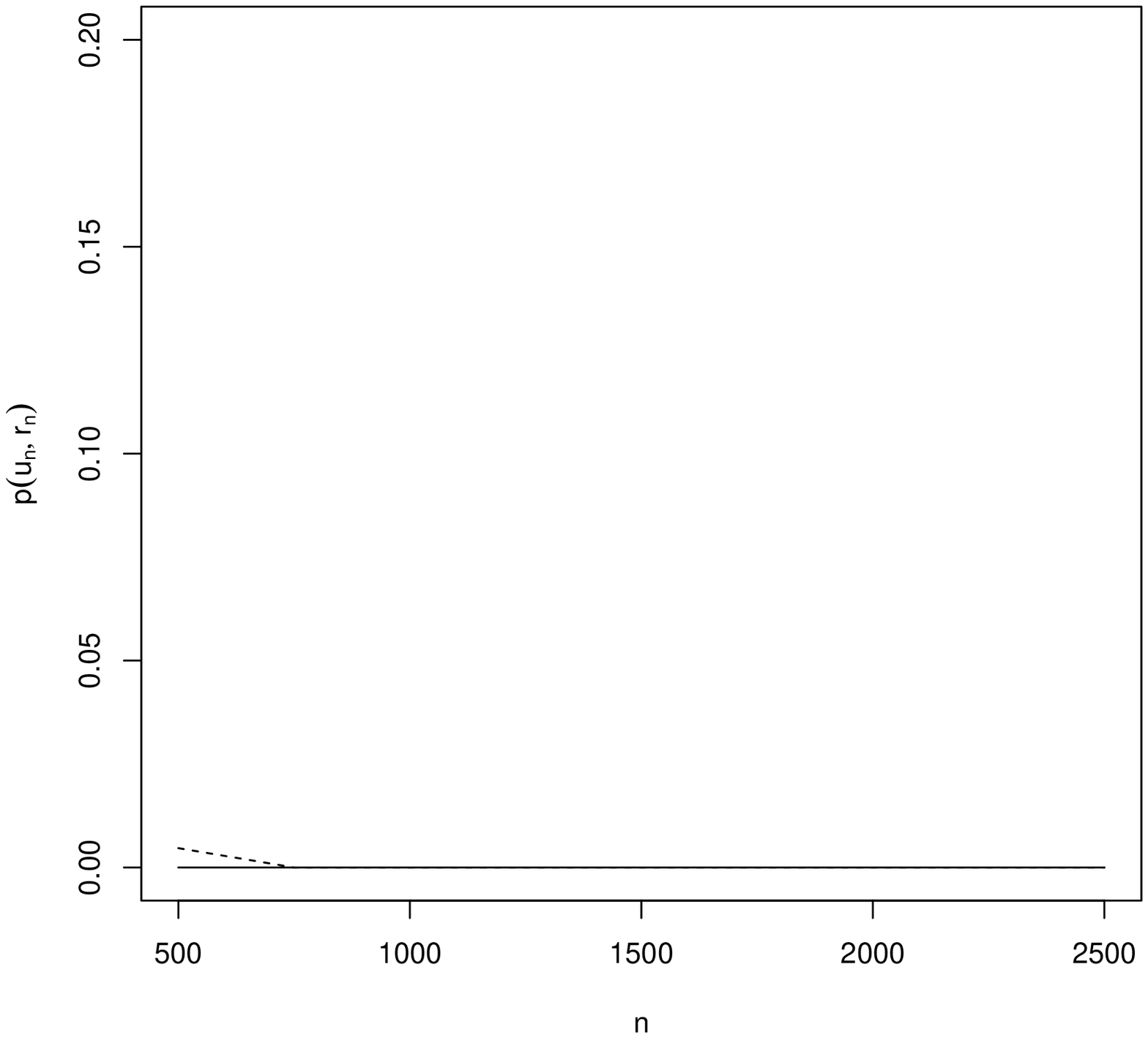}
\includegraphics[width=3.9cm,height=3.9cm]{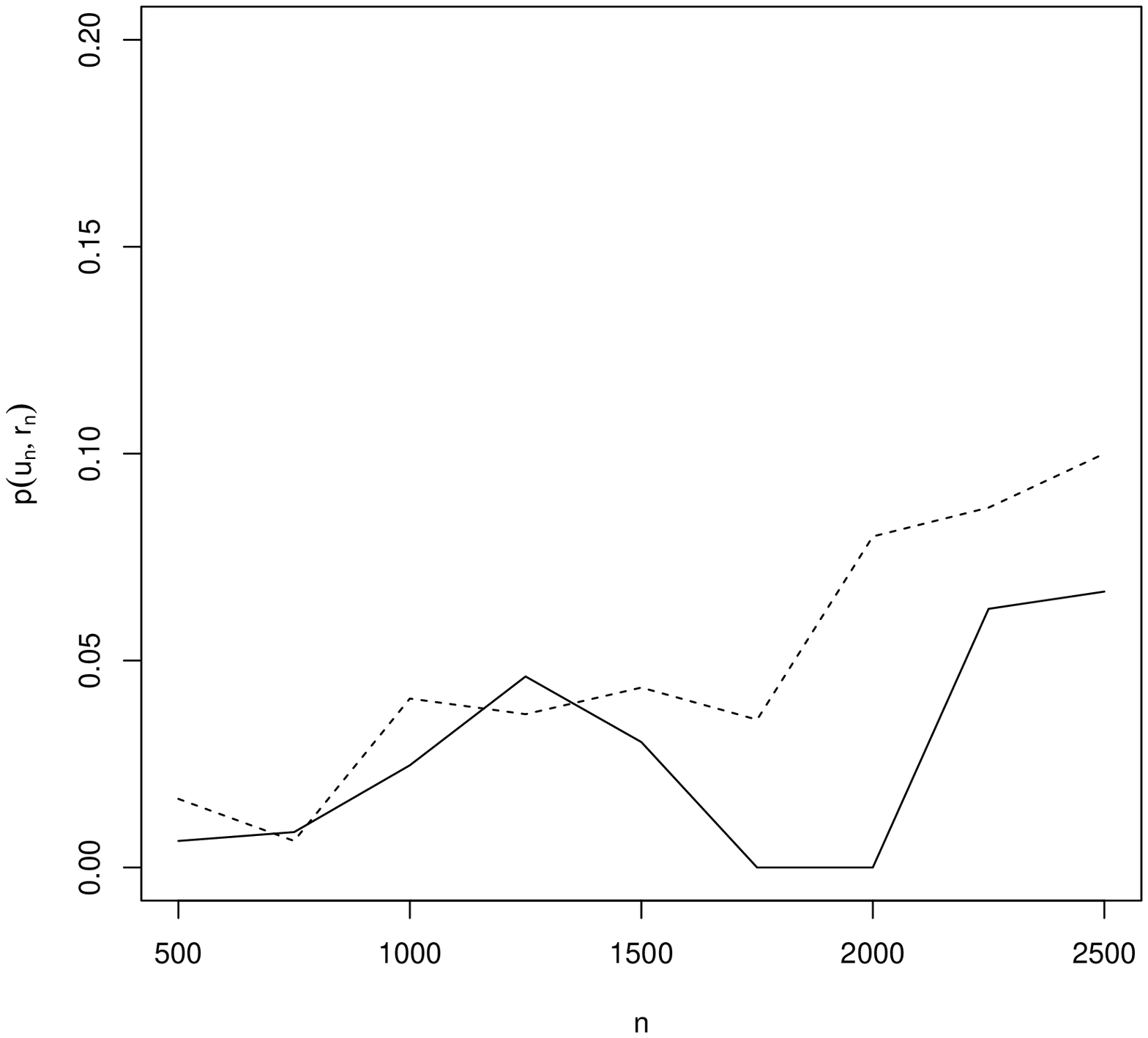}
\includegraphics[width=3.9cm,height=3.9cm]{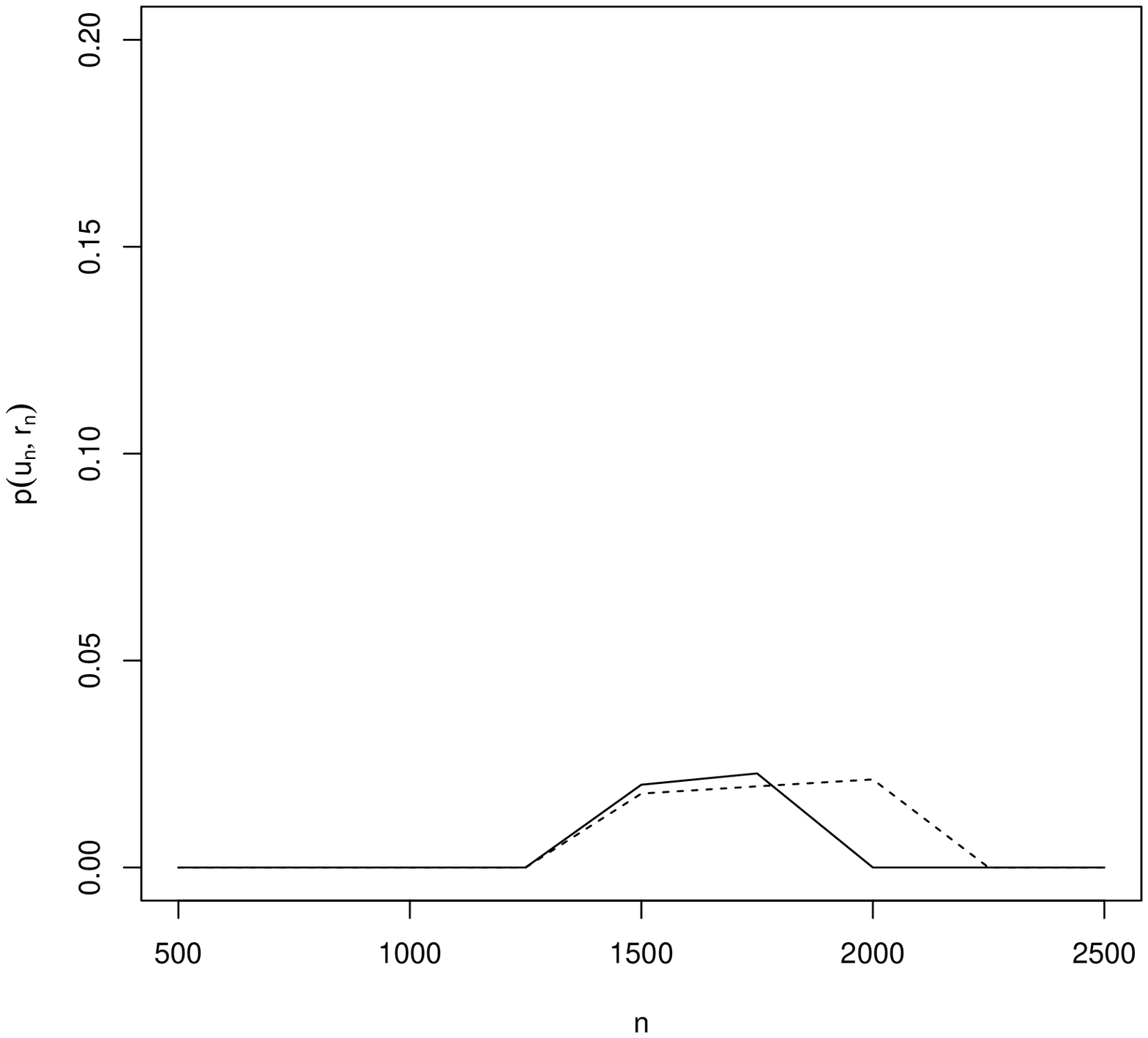}
\caption{From left to right and top to bottom, proportions of anti-D$^{(2)}$($u_n$) of cycles \z\esp for ARCauchy, ARUnif, MM, MAR and Markov chain with $k=3$ and  Markov chain with $k=4$, for $\tau=15$ (full line) and $\tau=20$ (dotted line), with $k_n=[(\log n)^3]$.\label{figD2Z}}
\end{center}
\end{figure}

\begin{figure}
\begin{center}
\includegraphics[width=3.9cm,height=3.9cm]{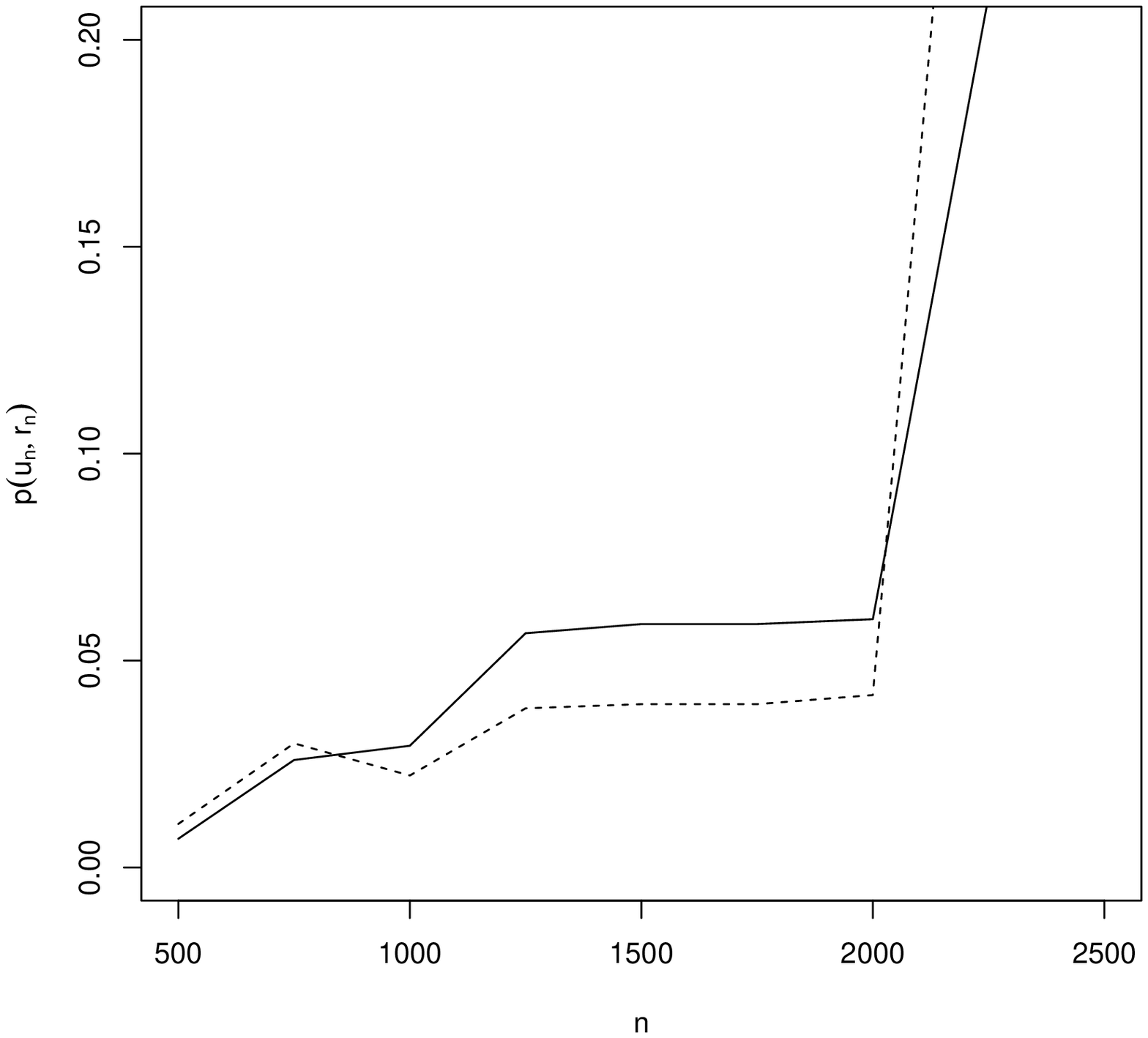}
\includegraphics[width=3.9cm,height=3.9cm]{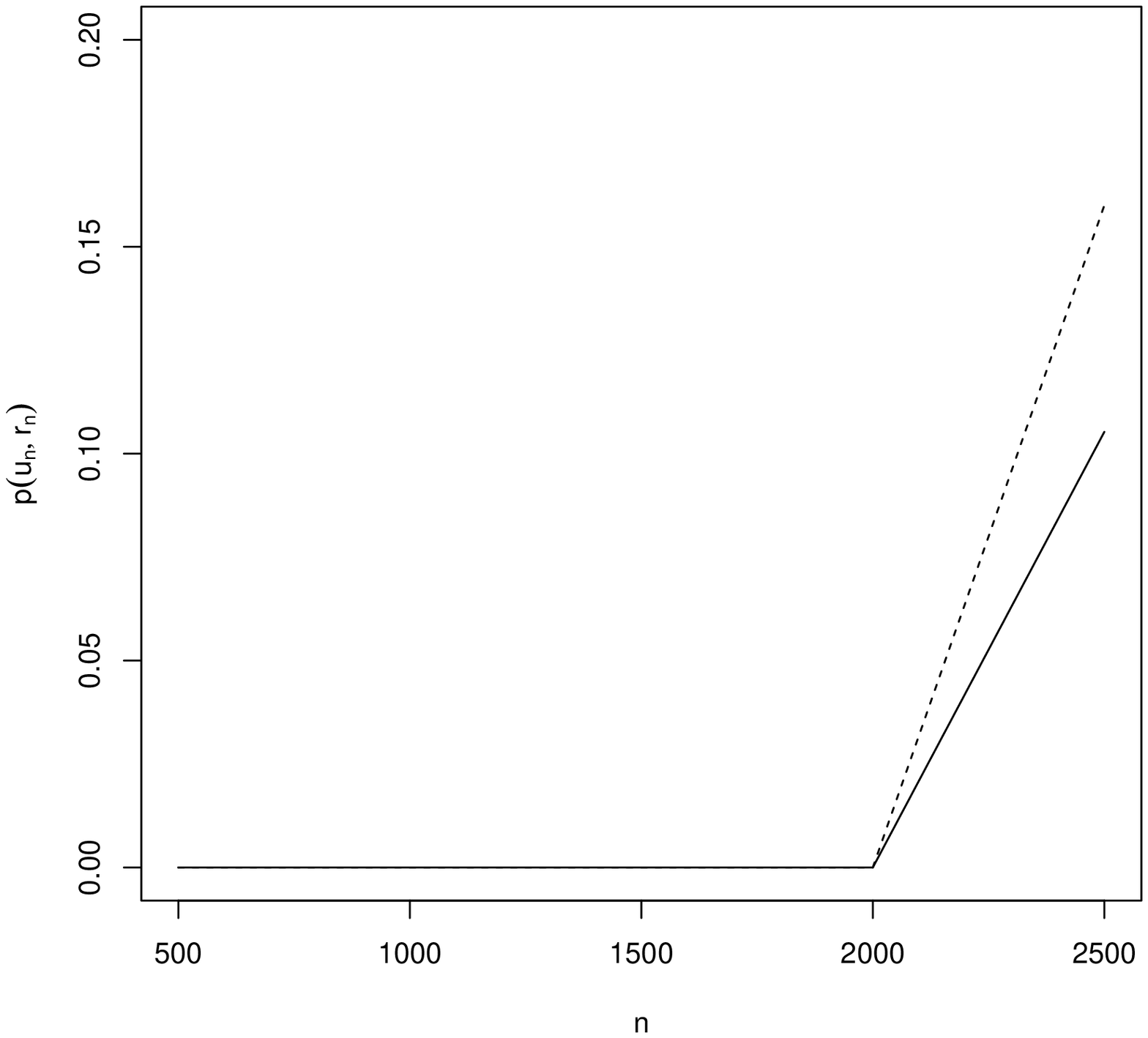}\\
\includegraphics[width=3.9cm,height=3.9cm]{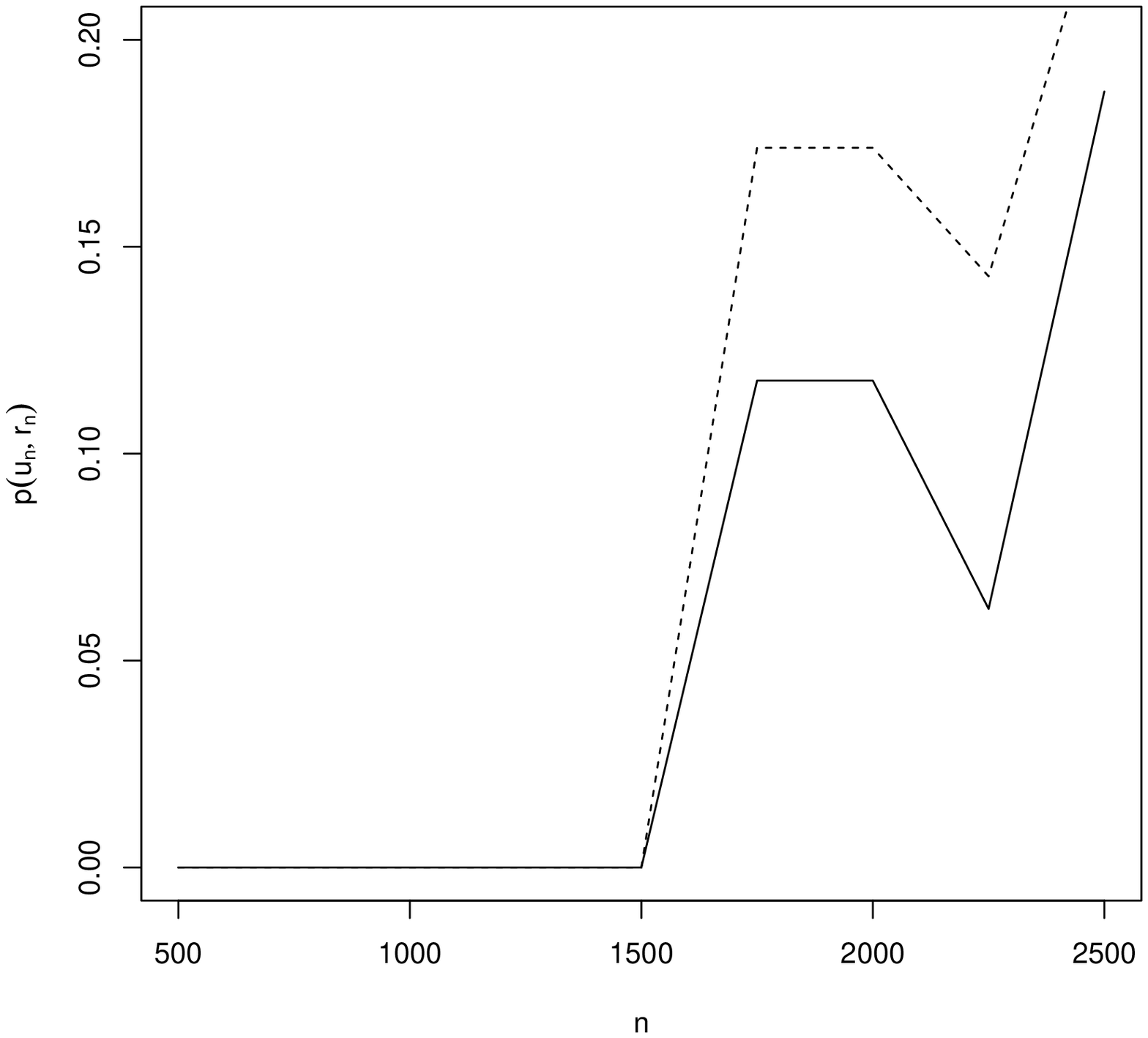}
\includegraphics[width=3.9cm,height=3.9cm]{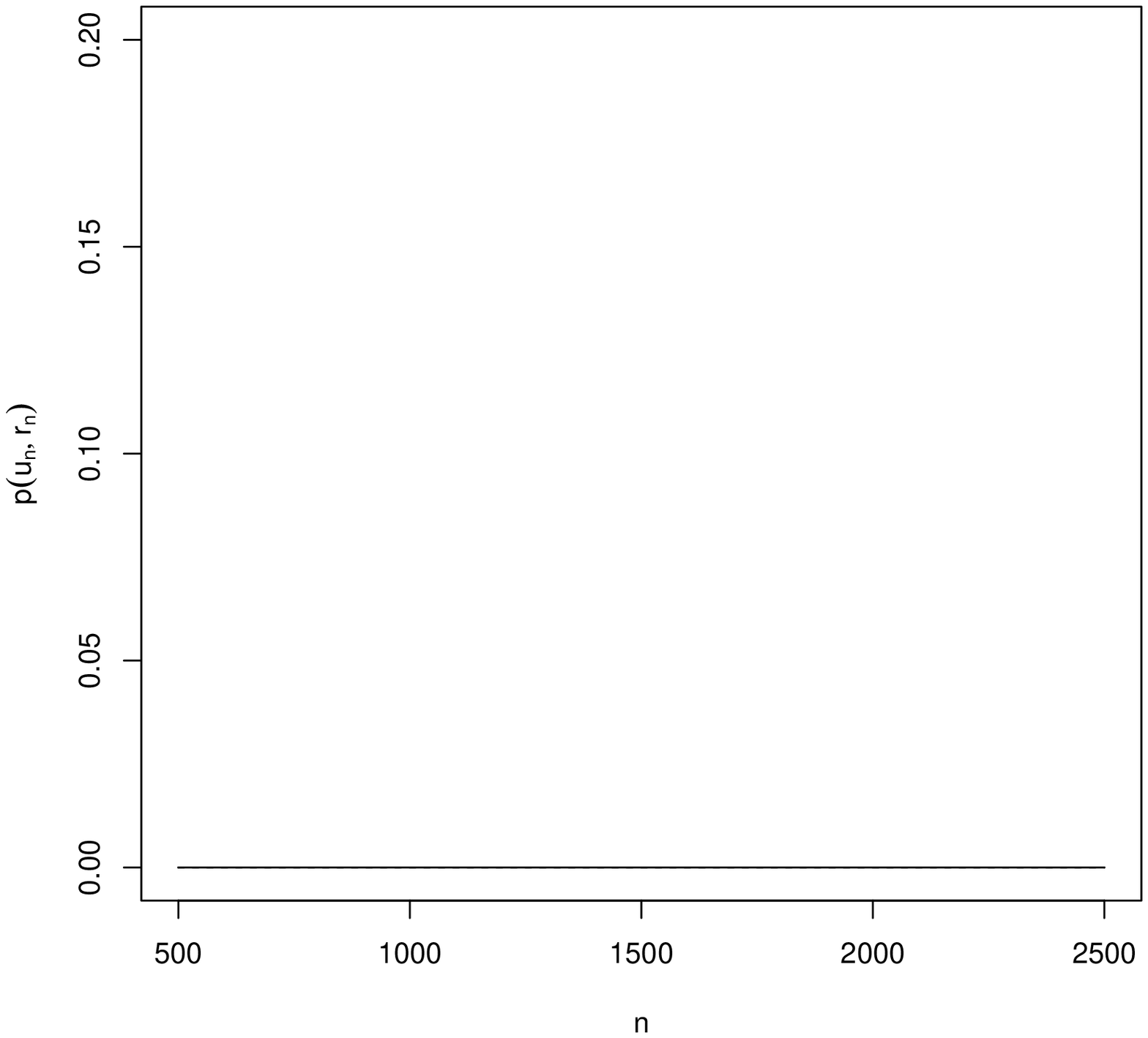}
\caption{From left to right and top to bottom, proportions of anti-D$^{(2)}$($u_n$) of cycles \z\esp for GARCH(1,1) with $k=3$ and $k_n=[(\log n)^{2.5}]$ and  GARCH(1,1) with $k=3,4,5$ and $k_n=[(\log n)^3]$, for $\tau=15$ (full line) and $\tau=20$ (dotted line).\label{figD2ZGarch}}
\end{center}
\end{figure}

\subsection{Simulations}\label{ssim}
In our study we consider $1000$ replicates of simulated samples of size $1000$ of each of the models referred previously: ARCauchy ($\rho=-0.6$), ARUnif ($r=2$), MM ($\alpha_0=2/6,\alpha_1=1/6,\alpha_2=3/6$), MAR ($\phi=0.5$), Markov chain ($\alpha=0.5$) and GARCH(1,1) ($\lambda=0.25$, $\beta=0.7$). We have calculated the values of the new estimator $\widehat{\theta}$ given in (\ref{FDir}), as well as, the values of estimators  $\widehat{\theta}^{I}$, $\widehat{\theta}^{ML}$ and $\widehat{\theta}^{U}$ based on the new indirect approach in (\ref{FInd}), and estimator $\widehat{\theta}^{SS}$ based on (\ref{FIndtdc}). Although   $\widehat{\theta}^{FF}$ and $\widehat{\theta}^{FF^*}$  are derived under a max-stable premise, we still apply them since, in practice,  we are taking cycles \z\esp of maximums which, albeit crudely, can approach a max-stable behavior. We denote all these estimators as indirect. For comparison, we also consider the runs estimator ($\widetilde{\theta}^{R}$)  and the intervals estimator ($\widetilde{\theta}^{I}$) directly for $\theta_X$. In opposition to indirect estimators, we denote $\widetilde{\theta}^{R}$ and  $\widetilde{\theta}^{I}$ as direct.
The root mean squared errors (rmse) and the absolute mean biases (abias) are given in Tables \ref{tab1} and \ref{tab2}, for levels $u_n$ corresponding to the empirical quantiles $0.95$, $0.975$ and $0.99$, respectively denoted, $q_{0.95}$, $q_{0.975}$ and $q_{0.99}$.
For models MM, ARUnif, ARCauchy and MAR, which satisfy condition D$^{(3)}$($u_n$), all the new estimators were based on the construction of cycles \z\esp by taking $k=3$. The direct runs estimator $\widetilde{\theta}^{R}$ was also computed for run $r=k$ (see Section \ref{sint}). These are reported in Table \ref{tab1}.
The results for the Markov chain and GARCH(1,1) are given in Table \ref{tab2}, by considering that D$^{(k)}$($u_n$) holds with $k=4$ in the first case and $k=5$ in the second model (see Figures \ref{figD3} to \ref{figD2ZGarch}). In what concerns the direct runs estimator $\widetilde{\theta}^{R}$, we choose a run $r$ equal to $k=4$   in the Markov chain model and run $r$ equal to $k=5$ in the GARCH(1,1) model. Ancona-Navarrete and Tawn (\cite{anc+tawn00}, 2000) considered $r=10$ for the runs estimator $\widetilde{\theta}^{R}$ in the Markov chain model. Indeed, if we take $r=10$ in our simulations for this model, we obtain slightly lower rmse's for this estimator. We have also considered $r=10$ in the GARCH(1,1) model which led to an overall decreasing of $0.1$ in the rmse's for estimator $\widetilde{\theta}^{R}$.
The presented choice of the values $k$ for the indirect estimators leads to the best results among other values of $k$ also tried in simulations but not reported. Indeed, if the models satisfy condition D$^{(k)}$($u_n$), the results by taking  $k+1$ are quite close but if we continue to increase $k$, they get worst. Observe that a too much large $k$ means larger cycles \z\esp and thus some loss of information.

The new approach presents good results, particularly with  estimators $\widehat{\theta}$ and $\widehat{\theta}^{U}$. As expected, the upcrossings estimator is a competitor within our framework.  The estimator $\widehat{\theta}^{FF}$ has also a good performance, except for the ARUnif model. In this case the results are better if we take $k=4$, leading to a rmse ranging from $0.084$ to $0.158$. One reason is that the cycles \z\esp with $k=4$ for this model may be more close of a max-stable behavior. We observe a similar situation with estimator $\widehat{\theta}^{FF^*}$. It performs well except in model ARUnif  where, for $k=4$, we obtain a rmse of 0.077, as well as in model ARCauchy where $k=4$ leads to a rmse of 0.063.
The intervals estimator yields the largest errors and behaves better if applied indirectly in the case of the Markov chain and the GARCH(1,1).
The indirect estimators $\widehat{\theta}^{ML}$ and $\widehat{\theta}^{SS}$ have a similar performance. The results tend to be better at lower thresholds.

\begin{table}
\caption{The root mean squared error (rmse) and the absolute mean bias (abias) obtained for models MM, ARUnif, ARCauchy and MAR, by considering the empirical quantiles $0.95$, $0.975$ and $0.99$, respectively, $q_{0.95}$, $q_{0.975}$ and $q_{0.99}$. The direct runs estimator $\widetilde{\theta}^{R}$ is based on run $r=3$.
\label{tab1}}
\begin{center}
\begin{tabular}{|l|ccc|ccc|}
\hline
&&rmse&&&abias&\\
\hline
%
%
MM&  $q_{0.95}$ &   $q_{0.975}$    &   $q_{0.99}$ &   $q_{0.95}$ &   $q_{0.975}$    &   $q_{0.99}$ \\
\hline

$\widetilde{\theta}^{R}$&   0.055   &   0.063   &   0.095   &   0.028   &   0.002   &   0.024   \\
$\widetilde{\theta}^{I}$&   0.114   &   0.152   &   0.221   &   0.061   &   0.075   &   0.137   \\
$\widehat{\theta}$ &   0.057   &   0.062   &   0.095   &   0.036   &   0.007   &   0.020    \\
$\widehat{\theta}^{U}$&   0.055   &   0.077   &   0.138   &   0.013   &   0.011   &   0.058   \\
$\widehat{\theta}^{I}$&   0.141   &   0.184   &   0.268   &   0.071   &   0.105   &   0.211   \\
$\widehat{\theta}^{ML}$&   0.063   &   0.077   &   0.138   &   0.009   &   0.008   &   0.031   \\
$\widehat{\theta}^{SS}$&   0.055   &   0.084   &   0.176   &   0.000   &   0.023   &   0.112   \\
$\widehat{\theta}^{FF}$&   0.032   &   0.032   &   0.055   &   0.003   &   0.004   &   0.014   \\

\hline
$\widehat{\theta}^{FF^*}$&     &   0.032   &      &     &   0.003   &     \\
\hline
\hline
ARUnif  &   $q_{0.95}$ &   $q_{0.975}$     &   $q_{0.99}$ &   $q_{0.95}$ &   $q_{0.975}$     &   $q_{0.99}$ \\
\hline

$\widetilde{\theta}^{R}$&   0.063   &   0.089   &   0.138   &   0.005   &   0.011   &   0.021   \\
$\widetilde{\theta}^{I}$&   0.179   &   0.130    &   0.118   &   0.200 &   0.182   &   0.202   \\
$\widehat{\theta}$   &   0.003   &   0.009   &   0.019   &   0.063   &   0.089   &   0.138   \\
$\widehat{\theta}^{U}$&   0.089   &   0.118   &   0.182   &   0.011   &   0.018   &   0.039   \\
$\widehat{\theta}^{I}$ &   0.130    &   0.120    &   0.145   &   0.182   &   0.195   &   0.219   \\
$\widehat{\theta}^{ML}$&   0.015   &   0.017   &   0.022   &   0.089   &   0.122   &   0.197   \\
$\widehat{\theta}^{SS}$&   0.020    &   0.025   &   0.088   &   0.089   &   0.118   &   0.179   \\
$\widehat{\theta}^{FF}$&   0.335   &   0.335   &   0.335   &   0.331   &   0.331   &   0.331   \\
\hline
$\widehat{\theta}^{FF^*}$&     &   0.875   &      &      &   0.861   &      \\
\hline
\hline
ARCauchy
 &   $q_{0.95}$ &   $q_{0.975}$     &   $q_{0.99}$ &   $q_{0.95}$ &   $q_{0.975}$     &   $q_{0.99}$ \\
\hline

$\widetilde{\theta}^{R}$&   0.077   &   0.095   &   0.152   &   0.041   &   0.013   &   0.026   \\
$\widetilde{\theta}^{I}$&   0.158   &   0.182   &   0.237   &   0.095   &   0.089   &   0.132   \\
$\widehat{\theta}$&   0.084   &   0.095   &   0.152   &   0.051   &   0.019   &   0.022   \\
$\widehat{\theta}^{U}$&   0.095   &   0.130    &   0.210    &   0.018   &   0.006   &   0.068   \\
$\widehat{\theta}^{I}$&   0.179   &   0.219   &   0.286   &   0.088   &   0.112   &   0.194   \\
$\widehat{\theta}^{ML}$&   0.095   &   0.134   &   0.219   &   0.014   &   0.006   &   0.05    \\
$\widehat{\theta}^{SS}$&   0.089   &   0.134   &   0.219   &   0.003   &   0.026   &   0.141   \\
$\widehat{\theta}^{FF}$&   0.084   &   0.084   &   0.084   &   0.072   &   0.074   &   0.075   \\
\hline
$\widehat{\theta}^{FF^*}$&      &   0.602   &      &    &   0.595   &      \\
\hline
\hline
MAR
&   $q_{0.95}$ &   $q_{0.975}$     &   $q_{0.99}$ &   $q_{0.95}$ &   $q_{0.975}$     &   $q_{0.99}$ \\
\hline
$\widetilde{\theta}^{R}$&   0.071   &   0.095   &   0.158   &   0.005   &   0.017   &   0.058   \\
$\widetilde{\theta}^{I}$&   0.134   &   0.176   &   0.261   &   0.067   &   0.087   &   0.157   \\
$\widehat{\theta}$&   0.071   &   0.094   &   0.154   &   0.026   &   0.007   &   0.051   \\
$\widehat{\theta}^{U}$&   0.077   &   0.114   &   0.187   &   0.009   &   0.022   &   0.072   \\
$\widehat{\theta}^{I}$&   0.145   &   0.184   &   0.251   &   0.075   &   0.105   &   0.178   \\
$\widehat{\theta}^{ML}$&   0.077   &   0.114   &   0.192   &   0.005   &   0.021   &   0.055   \\
$\widehat{\theta}^{SS}$&   0.077   &   0.114   &   0.210    &   0.002   &   0.032   &   0.126  \\
$\widehat{\theta}^{FF}$&   0.045   &   0.055   &   0.077    &   0.003   &   0.009   &   0.023  \\
\hline
$\widehat{\theta}^{FF^*}$&      &   0.032   &       &      &   0.006   &     \\
\hline
\end{tabular}
\end{center}
\end{table}

\begin{table}
\caption{The root mean squared error (rmse) and the absolute mean bias (abias) obtained for models Markov chain (MC) and GARCH(1,1), by considering the empirical quantiles $0.95$, $0.975$ and $0.99$, respectively, $q_{0.95}$, $q_{0.975}$ and $q_{0.99}$.
The direct runs estimator $\widetilde{\theta}^{R}$ is based on run $r=4$ and $r=5$ for, respectively, the MC and GARCH models.
\label{tab2}}
\begin{center}
\begin{tabular}{|l|ccc|ccc|}
\hline
&&rmse&&&abias&\\
\hline
MC  &   $q_{0.95}$ &   $q_{0.975}$    &   $q_{0.99}$ &   $q_{0.95}$ &   $q_{0.975}$    &   $q_{0.99}$\\
\hline
$\widetilde{\theta}^{R}$&   0.084   &   0.122   &   0.202   &   0.024   &   0.053    &   0.115   \\
$\widetilde{\theta}^{I}$&   0.141   &   0.184   &   0.305   &   0.082   &   0.100 &   0.202   \\
$\widehat{\theta}$&   0.071   &   0.089   &   0.141   &   0.036  &   0.013  &   0.032   \\
$\widehat{\theta}^{U}$&   0.071   &   0.089   &   0.141   &   0.022   &   0.005   &   0.063   \\
$\widehat{\theta}^{I}$&   0.118   &   0.148   &   0.226   &   0.032   &   0.063   &   0.136   \\
$\widehat{\theta}^{ML}$&   0.084   &   0.110    &   0.167   &   0.021  &   0.001   &   0.063   \\
$\widehat{\theta}^{SS}$&   0.077   &   0.110    &   0.187   &   0.014  &   0.017   &   0.099   \\
$\widehat{\theta}^{FF}$&   0.071   &   0.077    &   0.105   &   0.053  &   0.060   &   0.078   \\
\hline
$\widehat{\theta}^{FF^*}$&      &   0.055    &      &     &   0.050   &     \\
\hline
\hline
GARCH(1,1)
  &   $q_{0.95}$ &   $q_{0.975}$    &   $q_{0.99}$ &   $q_{0.95}$ &   $q_{0.975}$    &   $q_{0.99}$\\
\hline

$\widetilde{\theta}^{R}$&   0.148   &   0.212   &   0.295   &   0.121   &   0.175    &   0.245   \\
$\widetilde{\theta}^{I}$&   0.200 &   0.221   &   0.315   &   0.130    &   0.117   &   0.215   \\
$\widehat{\theta}$&   0.110   &   0.110 &   0.141   &   0.095   &   0.075   &   0.051   \\
$\widehat{\theta}^{U}$&   0.105   &   0.114   &   0.152   &   0.076   &   0.057   &   0.029   \\
$\widehat{\theta}^{I}$&   0.134   &   0.126   &   0.167   &   0.085    &   0.059   &   0.003   \\
$\widehat{\theta}^{ML}$&   0.010   &   0.110   &   0.152   &   0.073   &   0.057   &   0.027   \\
$\widehat{\theta}^{SS}$&   0.010   &   0.110    &   0.148   &   0.071   &   0.053    &   0.002   \\
$\widehat{\theta}^{FF}$&   0.063   &   0.084    &   0.134   &   0.020   &   0.019    &   0.017   \\
\hline
$\widehat{\theta}^{FF^*}$&      &   0.045    &     &      &   0.010    &      \\
\hline
\end{tabular}
\end{center}
\end{table}

\subsection{Application to financial data}

Log-returns of a financial time series usually present high volatility and clustering of large values.
Klar \emph{et al.} (\cite{klar+12}, 2012) have analyzed DAX German stock market index time series and concluded that GARCH(1,1) is a good model to describe these data. In particular they considered the series of log-returns of DAX closing prices from 1991 to 1998 (see Figure \ref{figDAX}) and fitted a GARCH(1,1) model with autoregressive parameter $\lambda\simeq 0.08$, variance parameter $\beta=0.87$ and innovations $t_7$ (after removing null log-returns). By the tabulated values of the extremal index of GARCH(1,1) models in Laurini and Tawn (\cite{lau+tawn12}, 2012), the true value is around $0.3$. In Table \ref{tabaplic} we report the estimates, derived according to the conclusions of the simulations concerning the GARCH(1,1) model (see also the anti-D$^{(k)}$($u_n$) plots in Figures \ref{figaplicDks} and \ref{figaplicD2Z}). Thus the direct runs estimator $\widetilde{\theta}^{R}$ was computed with run $5$ and the indirect estimators ($\widehat{\theta}$, $\widehat{\theta}^{U}$, $\widehat{\theta}^{I}$, $\widehat{\theta}^{ML}$, $\widehat{\theta}^{SS}$,  $\widehat{\theta}^{FF}$ and $\widehat{\theta}^{FF^*}$) were calculated by considering cycles \z\esp  with $k=5$. The closest values of $0.3$ were obtained with quantile $0.95$ in all cases, which is also in accordance with the simulation study. The indirect upcrossings estimator $\widehat{\theta}^{U}$ presents the nearest approximation, followed by the indirect intervals estimator $\widehat{\theta}^{I}$ and $\widehat{\theta}$. We have also tried other values for $k$ and found that, in this series, $k=6$ leads to the best approximations of $0.3$, with $\widehat{\theta}=0.34$ and the remaining approximately $0.39$, except for the intervals and the direct runs estimator where the estimates were $0.12$ and $0.68$. If we consider the direct runs estimator $\widetilde{\theta}^{R}$ with run $10$ (see Section \ref{sestim}) we obtain the estimate $0.48$.\\

\begin{figure}
\includegraphics[width=5.9cm,height=5.9cm]{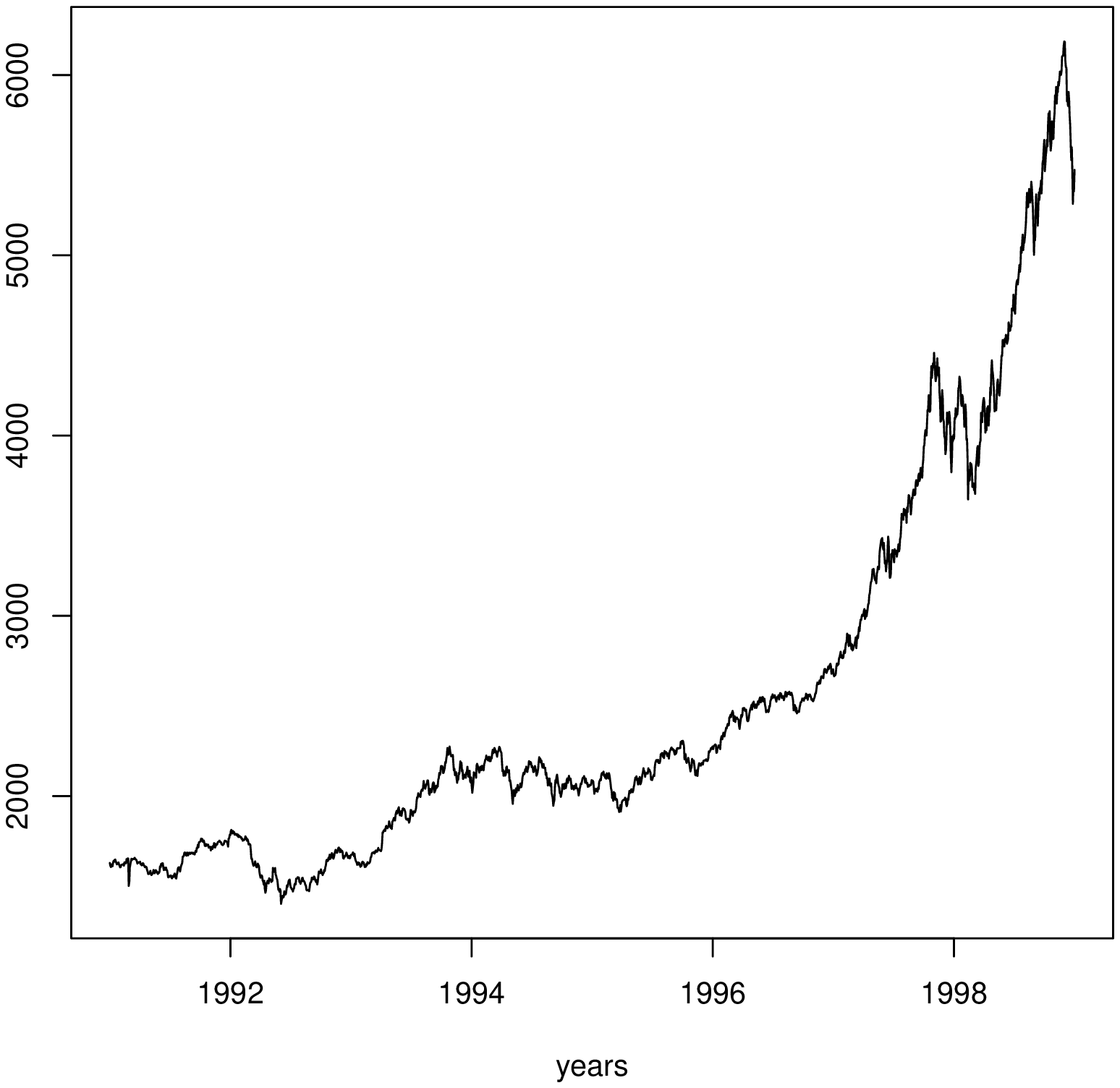}
\includegraphics[width=5.9cm,height=5.9cm]{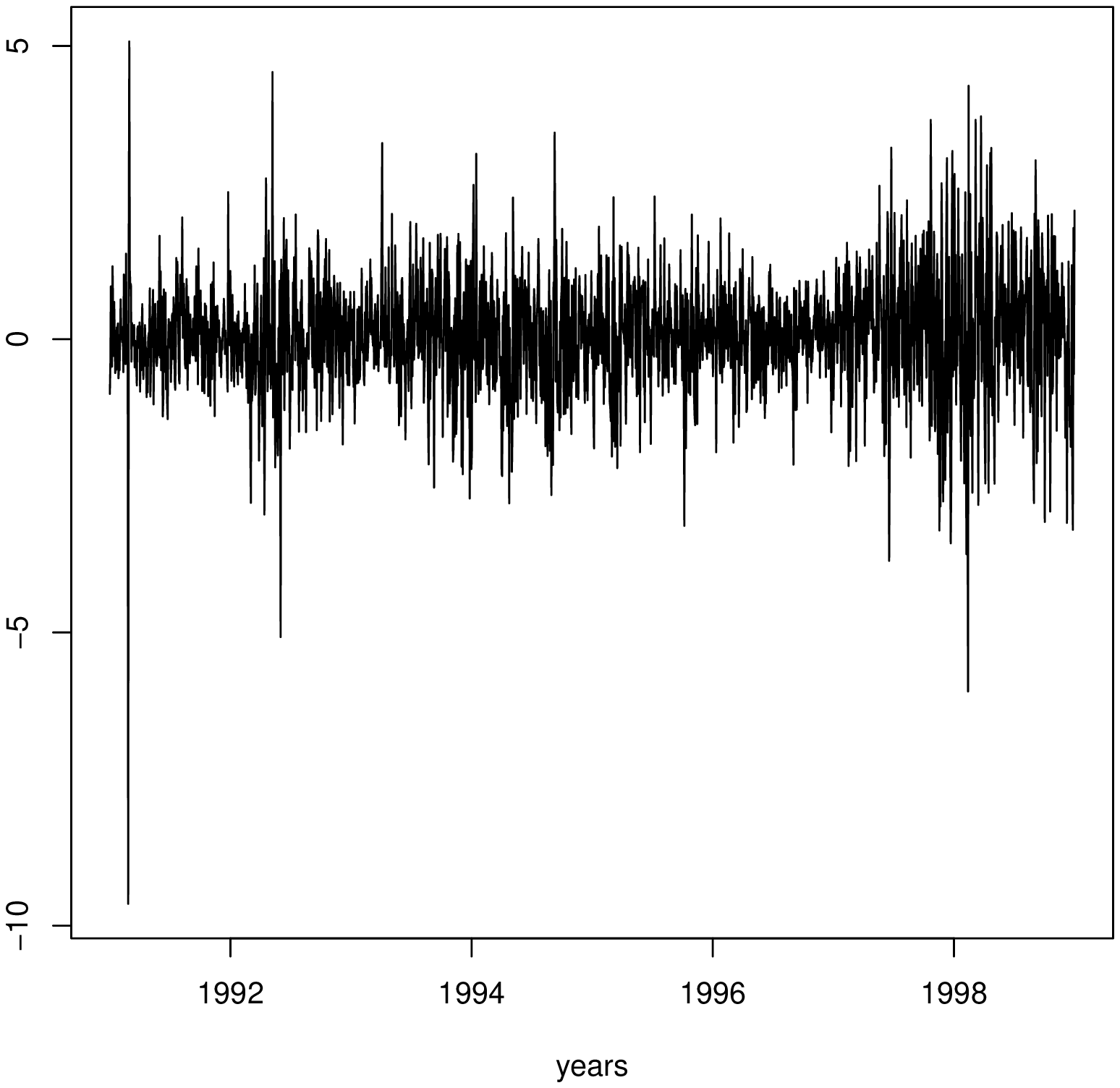}
\caption{Daily closing indexes (left) and daily log-returns (right) of DAX, from 1991 to 1998, with $1786$ observations (successive equal prices excluded).\label{figDAX}}
\end{figure}

\begin{table}
\caption{Estimates of the extremal index of the DAX series at quantile $0.95$. The direct runs estimator was derived with run $5$. The indirect estimators ($\widehat{\theta}$, $\widehat{\theta}^{U}$, $\widehat{\theta}^{I}$,
$\widehat{\theta}^{ML}$, $\widehat{\theta}^{SS}$, $\widehat{\theta}^{FF}$ and $\widehat{\theta}^{FF^*}$) were obtained based on cycles \z\esp  with $k=5$.  \label{tabaplic}}
\begin{center}
\begin{tabular}{ccccccccc}
$\widetilde{\theta}^{R}$ &$\widetilde{\theta}^{I}$&$\widehat{\theta} $&$\widehat{\theta}^{U}$
&$\widehat{\theta}^{I}$
&$\widehat{\theta}^{ML}$&$\widehat{\theta}^{SS}$&$\widehat{\theta}^{FF}$
&$\widehat{\theta}^{FF^*}$\\
\hline
 0.72& 0.50&0.40& 0.36 & 0.37& 0.48 & 0.50 & 0.47& 0.49
\end{tabular}
\end{center}
\end{table}

\begin{figure}
\begin{center}
\includegraphics[width=3.9cm,height=3.9cm]{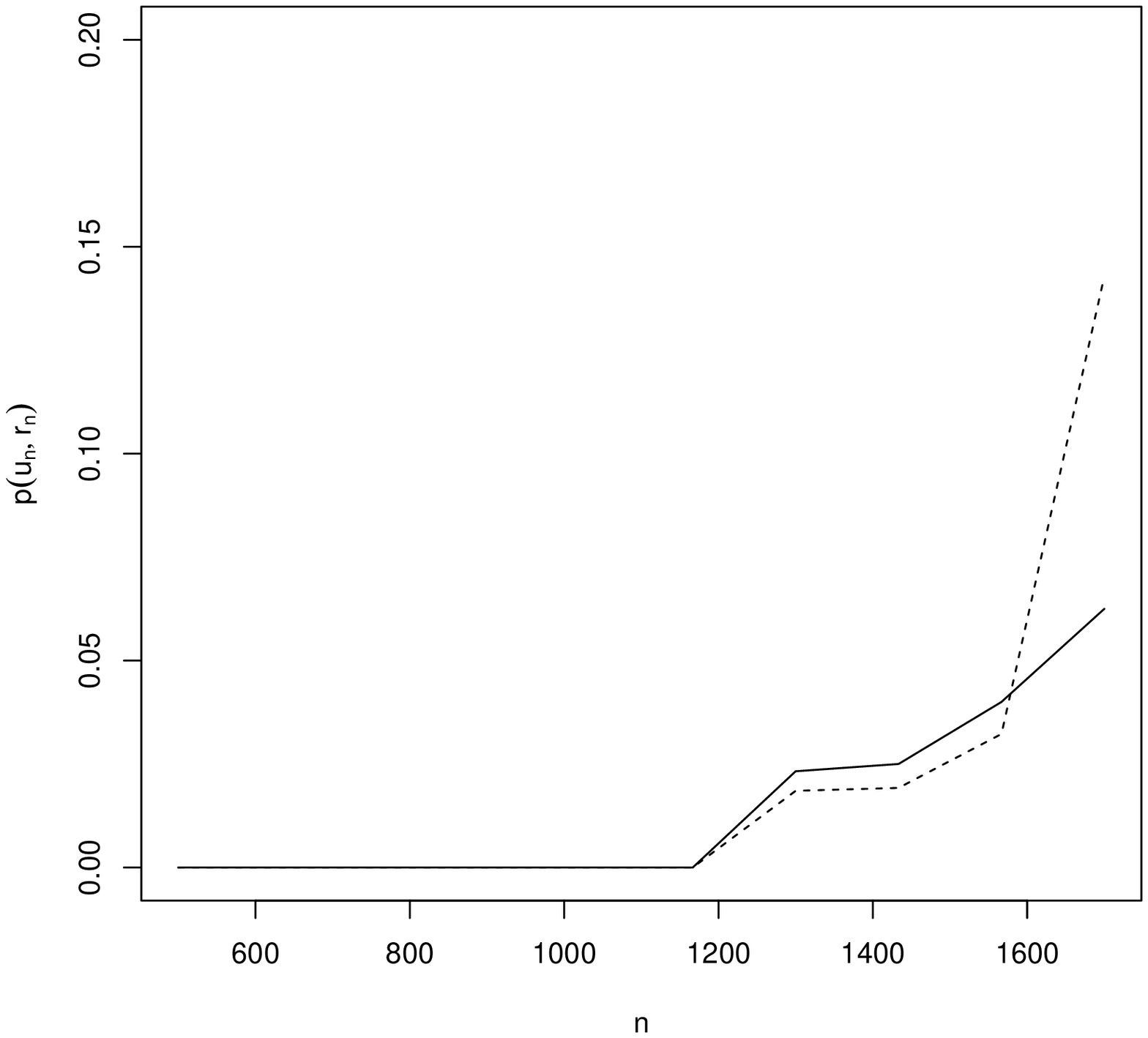}
\includegraphics[width=3.9cm,height=3.9cm]{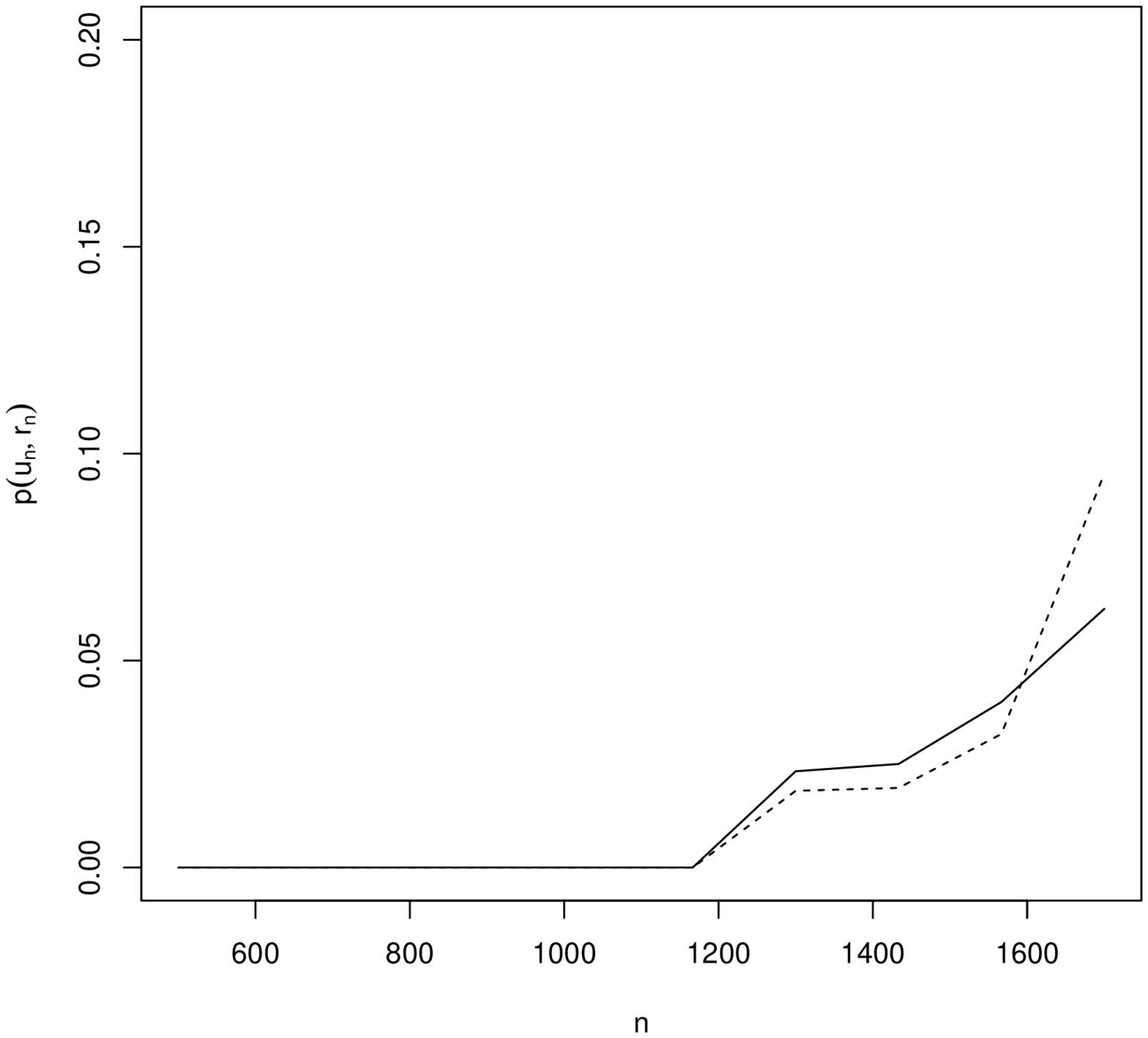}\\
\includegraphics[width=3.9cm,height=3.9cm]{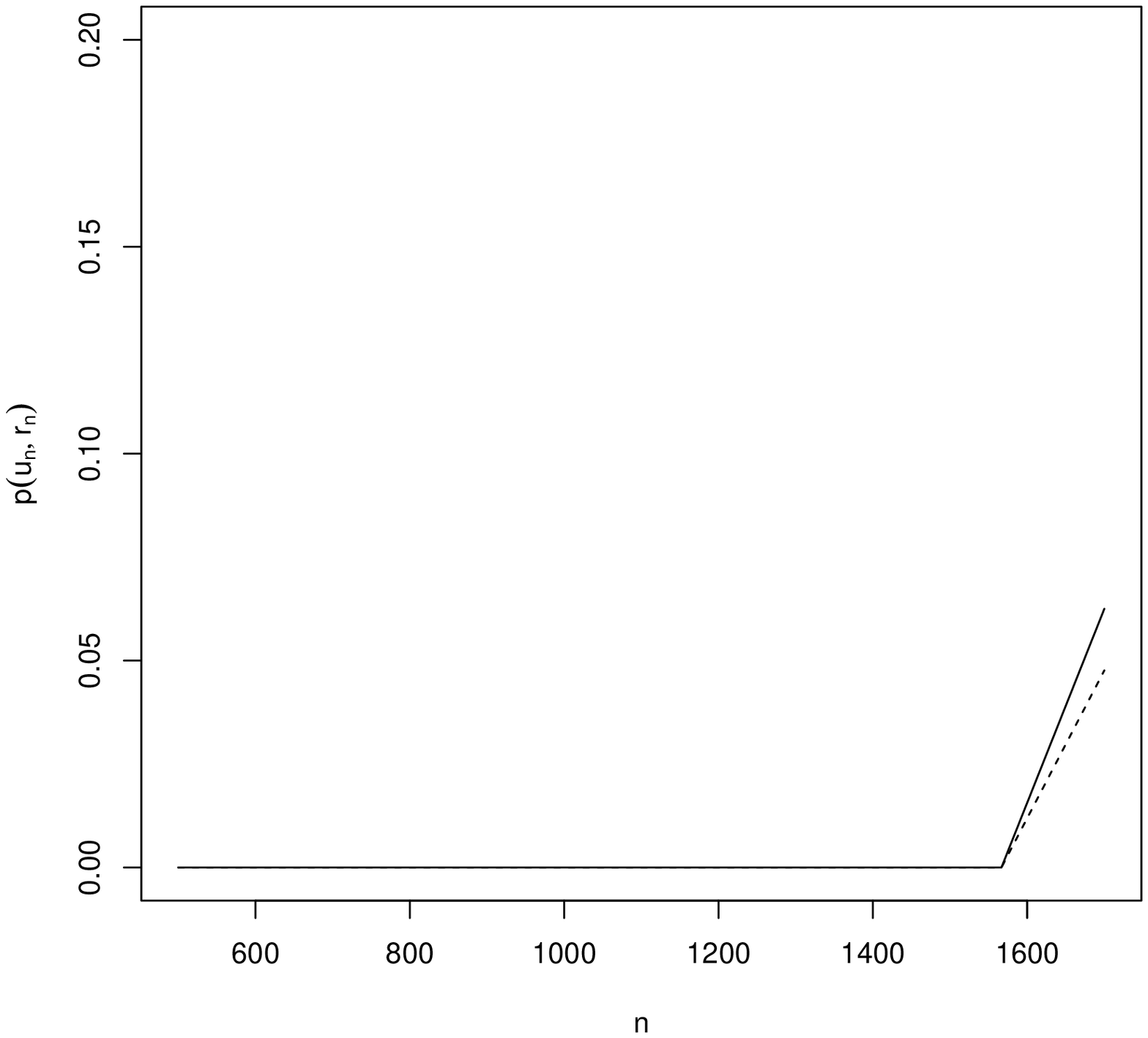}
\includegraphics[width=3.9cm,height=3.9cm]{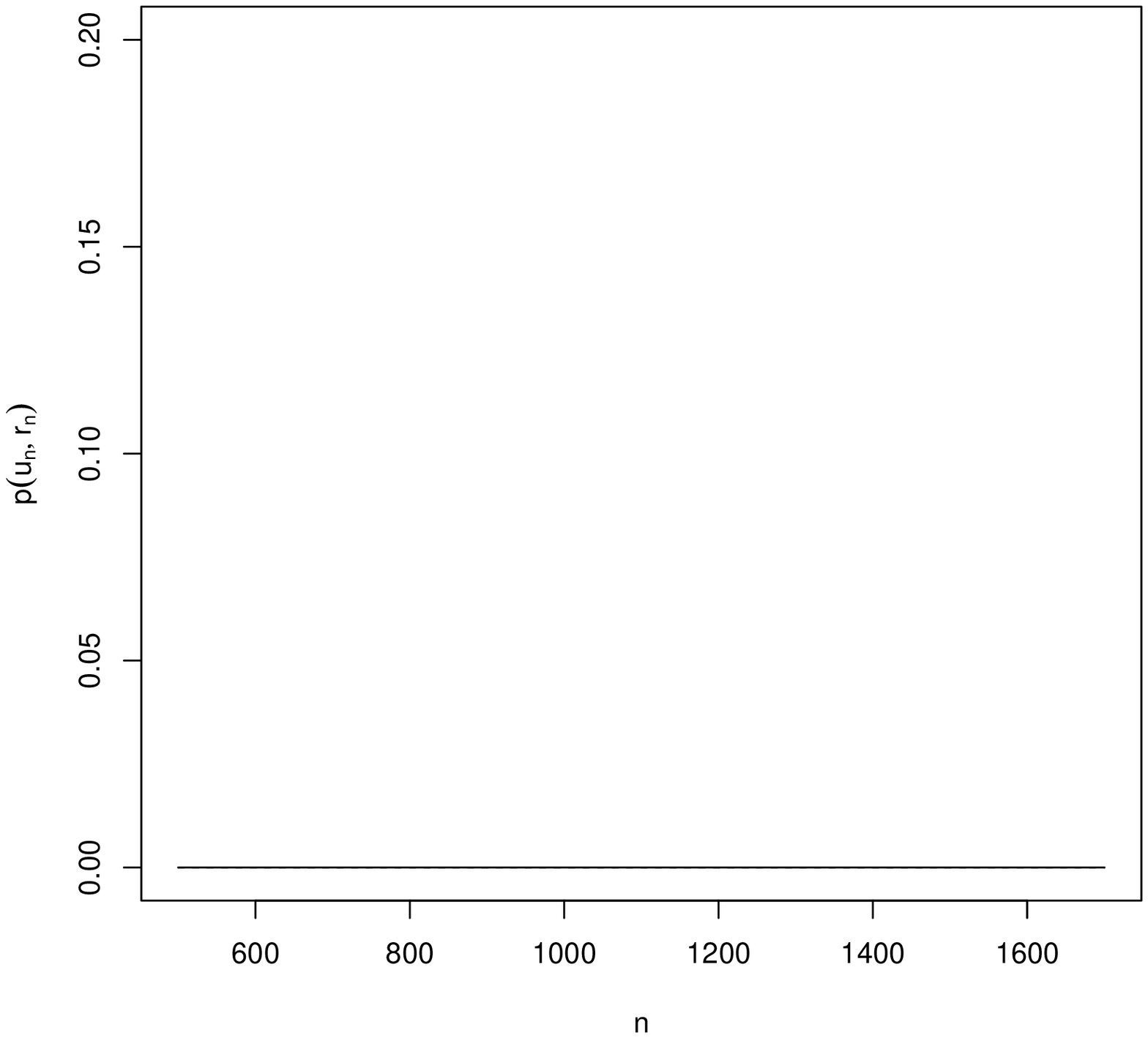}
\caption{From left to right, proportions of anti-D$^{(3)}$($u_n$) to anti-D$^{(6)}$($u_n$) of the DAX series, with $k_n=[(\log n)^{2.5}]$, for $\tau=15$ (full line) and $\tau=20$ (dotted line).\label{figaplicDks}}
\end{center}
\end{figure}

\begin{figure}
\begin{center}
\includegraphics[width=3.9cm,height=3.9cm]{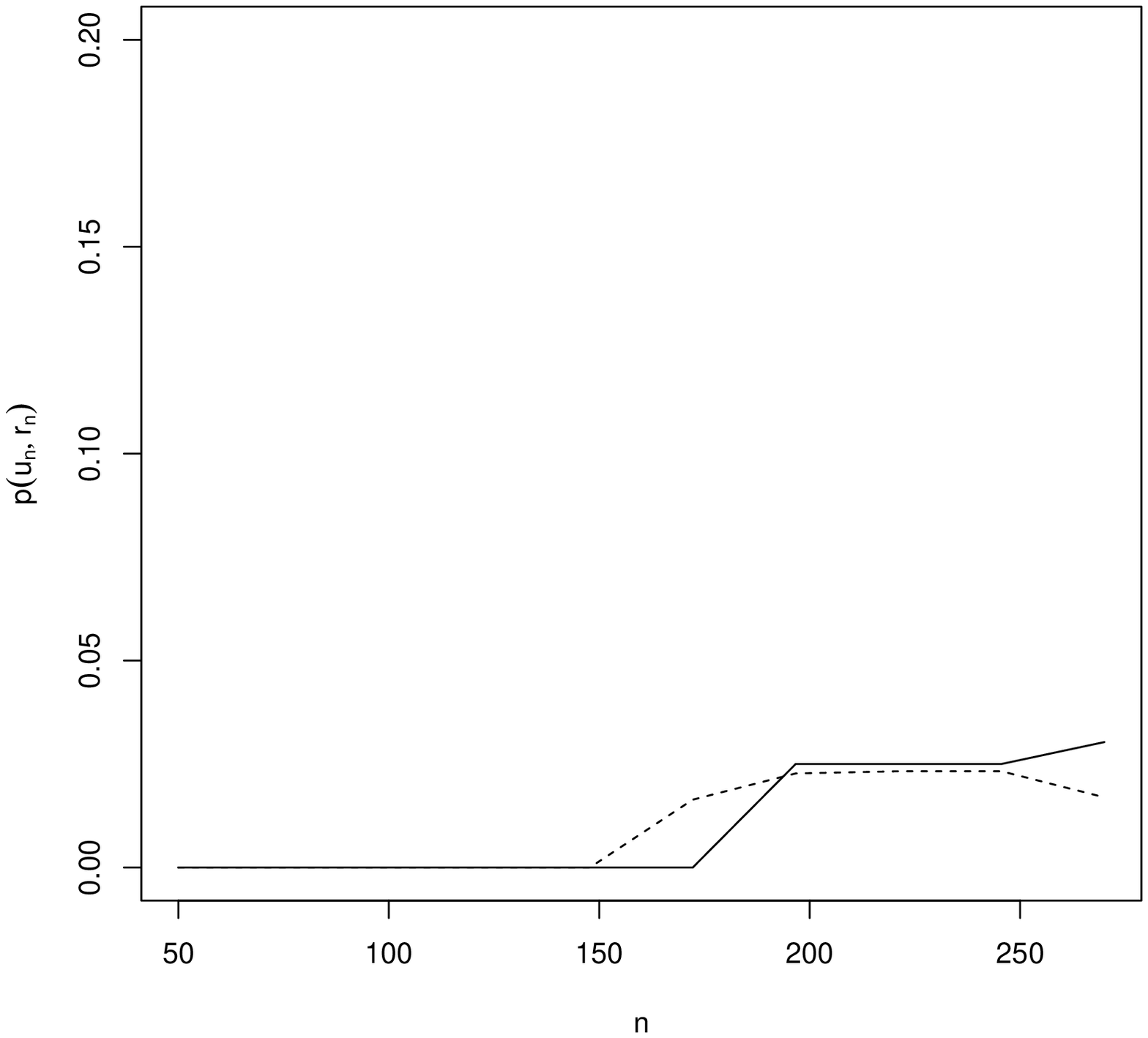}
\includegraphics[width=3.9cm,height=3.9cm]{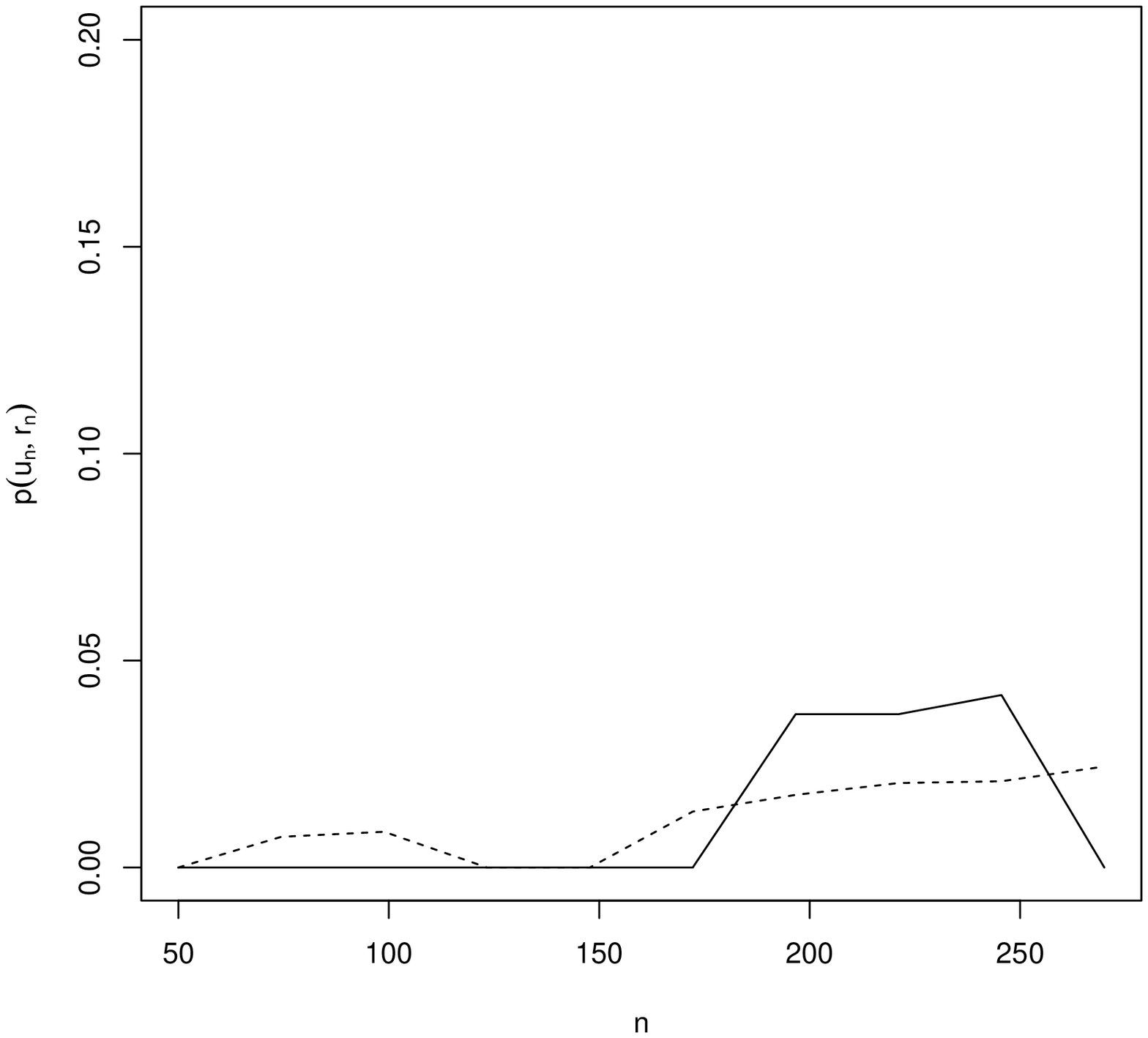}\\
\includegraphics[width=3.9cm,height=3.9cm]{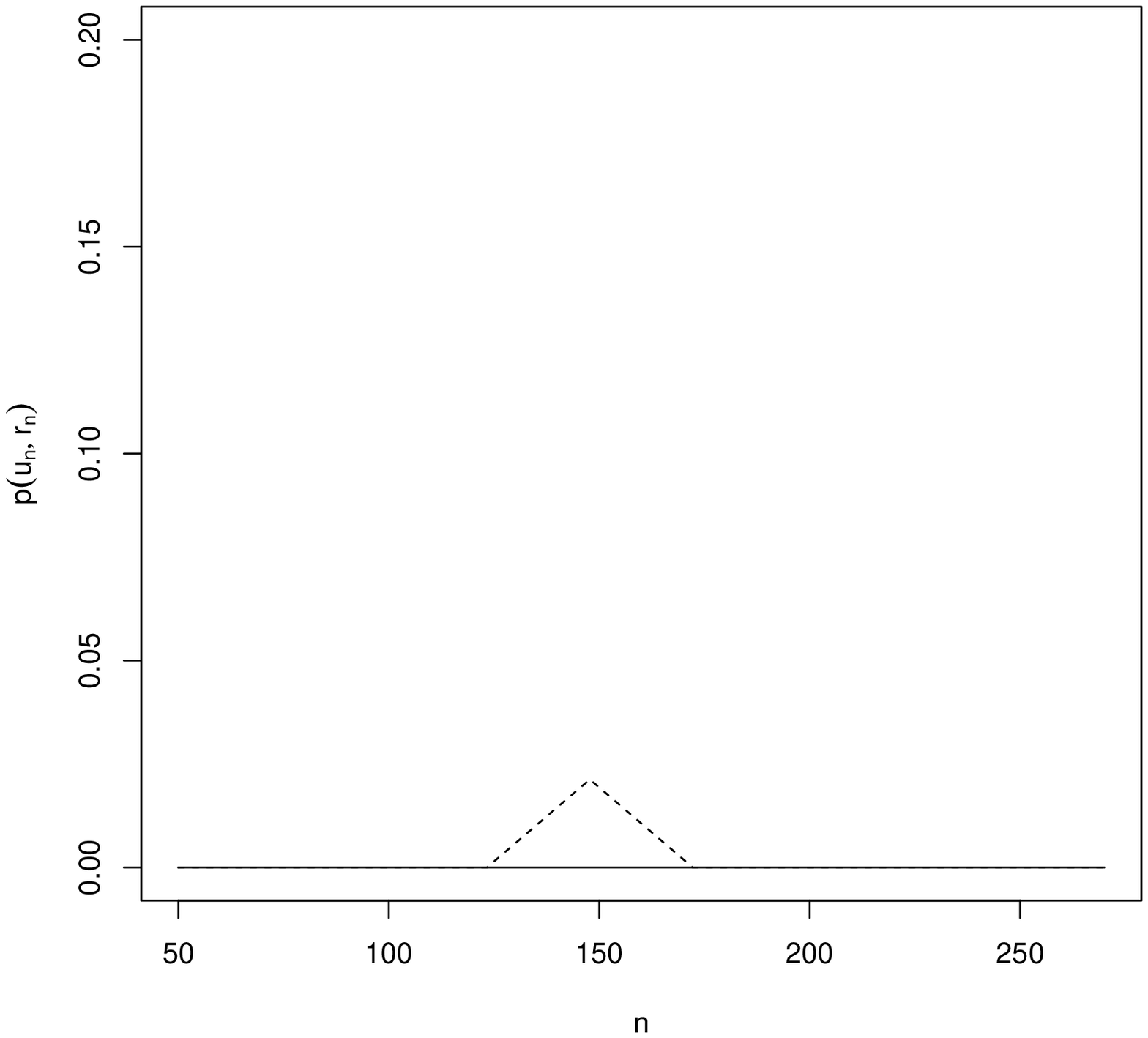}
\includegraphics[width=3.9cm,height=3.9cm]{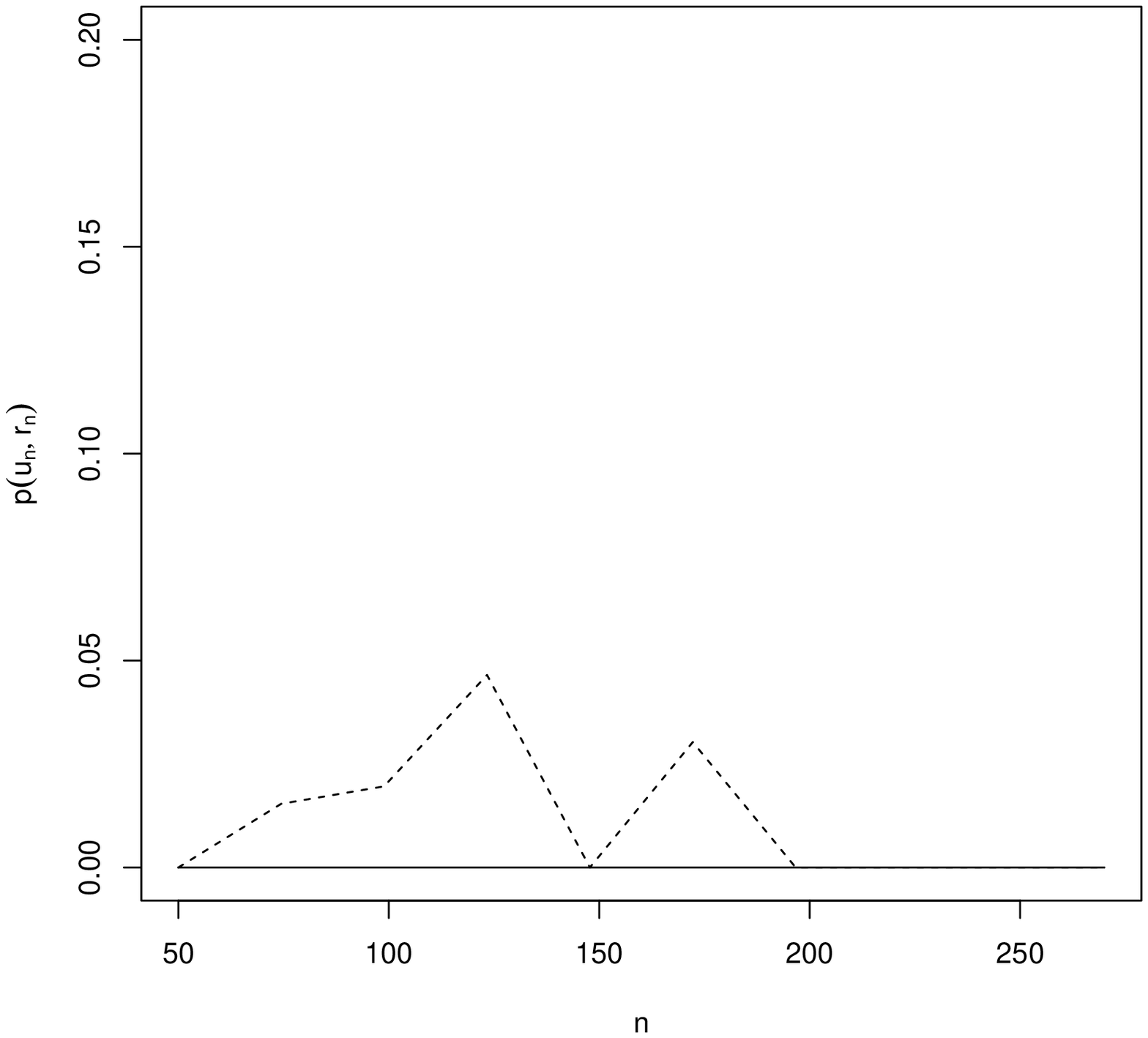}
\caption{From left to right, proportions of anti-D$^{(2)}$($u_n$) of cycles \z\esp of the DAX series with $k=3$ to $k=6$, for $\tau=5$ (full line) and $\tau=10$ (dotted line), with $k_n=[(\log n)^2]$.\label{figaplicD2Z}}
\end{center}
\end{figure}

\section{Conclusions}\label{sdisc}
In this work we consider the estimation of the extremal index, an important dependence parameter within extreme values of stationary sequences. The new approach requires the validity of the local dependence condition D$^{(k)}$($u_n$) of Chernick \emph{et al.} (\cite{chern+91}, 1991). The results are promising under a suitable choice for $k$ and an empirical procedure was proposed for this evaluation. We also find that it is a useful tool for the well-known runs estimator, by guiding a first choice for the run.


\begin{thebibliography}{000}

\bibitem{anc+tawn00} Ancona-Navarrete, M.A., Tawn, J.A. (2000). A comparison of methods
for estimating the extremal index. Extremes 3, 5--38.

\bibitem{chern78} Chernick M.R. (1978). Mixing conditions and limit theorems for maxima of dome stationary sequences. PhD dissertation, Stanford University.

\bibitem{chern+91} Chernick M.R., Hsing T., McCormick W.P. (1991). Calculating the extremal index for a class of stationary sequences. Adv. Appl. Probab. 23, 835--850.

\bibitem{coles+99} Coles S.G., Heffernan J.E., Tawn J.A. (1999). Dependence measures for extreme value analyses. Extremes 2, 339--365.

\bibitem{dav+res89}Davis R., Resnick S. (1989). Basic properties and prediction of max-ARMA processes. Adv. Appl. Probab. 21, 781--803.

\bibitem{deheuv83} Deheuvels P. (1983). Point processes and multivariate extreme values. JMVA 13, 257--272.

\bibitem{fer06} Ferreira H. (2006) The upcrossing index and the extremal index. J. Appl. Probab. 43, 927--937.

\bibitem{fer+fer12a} Ferreira M., Ferreira H. (2012). On extremal dependence: some contributions. TEST 21(3), 566--583.

\bibitem{fer+fer12b}Ferreira, H., Ferreira, M. (2012). On extremal dependence of block vectors. Kybernetika 48(5), 988--1006.

\bibitem{ferro+segers03} Ferro C.A.T., Segers J. (2003). Inference for clusters of extreme values. J. R. Stat. Soc. B 65, 545--556.

\bibitem{hall+02} Hall P., Peng L., Yao Q. (2002). Moving-maximum models for extremes of time series. Journal of Statistical Planning and Inference 103, 51--63.

\bibitem{hsing93} Hsing T. (1993). Extremal index estimation for a weakly dependent stationary sequence. Ann. Stat. 21,
2043--2071.

\bibitem{hsing+88} Hsing T., H\"{u}sler J., Leadbetter M.R. (1988). On the exceedance point process for a stationary sequence.
Probab. Theory Relat. Fields 78, 97--112.

\bibitem{joe} Joe H. (1997). Multivariate Models and Dependence Concepts, Monographs on Statistics and Applied Probabilty
73, Chapman and Hall, London.

\bibitem{klar+12} Klar B., Lindner F., Meintanis S.G. (2012). Specification tests for the error distribution in Garch models. Comput. Stat. Data Anal. 56, 3587--3598.

\bibitem{lau+tawn12} Laurini F., Tawn J. (2012). The extremal index for GARCH(1,1) processes. Extremes  15, 511--529.

\bibitem{lead74} Leadbetter MR. (1974). On extreme values in stationary sequences. Z. Wahrscheinlichkeitstheor
Verw. Geb. 28(4), 289--303.


\bibitem{lead+83}Leadbetter M.R., Lindgren G.,  Rootzén H. (1983). Extremes and Related Properties of Random Sequences
and Processes. New York: Springer.

\bibitem{lead+nand89} Leadbetter, M.R., Nandagopalan, S. (1989). On exceedance point processes for stationary sequences under mild oscillation restrictions. Lect. Notes Stat. 51, 69--80.

\bibitem{meinguet12} Meinguet, T. (2012). Maxima of moving maxima of continuous functions. Extremes 15(3), 267--297.

\bibitem{nand90} Nandagopalan S. (1990). Multivariate extremes and estimation of the extremal index. Ph.D. Thesis, University of North Carolina, Chapel Hill.

\bibitem{rob+09} Robert C.Y., Segers J., Ferro C. (2009). A sliding blocks estimator for the extremal index. Electron. J. Stat. 3, 993--1020.

\bibitem{root88} Rootz\'{e}n H. (1988). Maxima and exceedances of stationary Markov chains. Adv. Appl. Probab. 20, 371--390.

\bibitem{schmi+stad06} Schmidt R., Stadtm\"{u}ller U. (2006). Non-parametric Estimation of Tail Dependence. Scandinavian Journal of Statistics 33, 307--335.

\bibitem{sebast+13} Sebastião J.R., Martins A.P., Ferreira H., Pereira L. (2013). Estimating the upcrossings index. TEST 22(4), 549--579.

\bibitem{smi+weis96} Smith R.L., Weissman I. (1996). Characterization and estimation of the multivariate extremal index. Technical
report, University of North Carolina at Chapel Hill, NC.

\bibitem{suv07} S\"{u}veges, M. (2007). Likelihood estimation of the extremal index. Extremes 10, 41--55.

\bibitem{weis+98} Weissman I., Novak S.Y. (1998). On blocks and runs estimators of the extremal index. J. Stat. Plan. Inference 66(2), 281--288.

\bibitem{weis+cohen95}Weissman I., Cohen U. (1995). The extremal index and clustering of high values for derived stationary
sequences. J. Appl. Prob. 32, 972--981.

\bibitem{zhang+smith10}Zhang Z., Smith R. (2010). On the estimation and application of max-stable processes. J. Stat. Plan. Inference 140, 1135--1153.
\end{thebibliography}
\end{document}